\documentclass[10pt, english]{amsart}

\usepackage{amsmath,amssymb,enumerate}

\usepackage[T1]{fontenc}
\usepackage[all]{xy}

\usepackage{mathtools}
\usepackage{babel}
\usepackage{amstext}
\usepackage{amsmath}
\usepackage{amsfonts}
\usepackage{latexsym}
\usepackage{ifthen}

\usepackage[table]{xcolor}
\usepackage{longtable}
\newcolumntype{M}[1]{>{\centering\arraybackslash}m{#1}}

\usepackage{xypic}
\xyoption{all}
\pagestyle{plain}

\newcommand{\codim}{{\rm codim}}

\newcommand{\sK}{{\mathcal K}}

\newtheorem{lemma1}{}[section]

\newenvironment{lemma}{\begin{lemma1}{\bf Lemma.}}{\end{lemma1}}

\newenvironment{theorem}{\begin{lemma1}{\bf Theorem.}}{\end{lemma1}}
\newenvironment{proposition}{\begin{lemma1}{\bf Proposition.}}{\end{lemma1}}
\newenvironment{corollary}{\begin{lemma1}{\bf Corollary.}}{\end{lemma1}}
\newenvironment{remark}{\begin{lemma1}{\bf Remark.}\rm}{\end{lemma1}}

\newenvironment{definition}{\begin{lemma1}{\bf Definition.}}{\end{lemma1}}

\newenvironment{remark*}{{\bf Remark.}}{}
\newenvironment{example*}{{\bf Example.}}{}
\newenvironment{assumption*}{{\bf Assumption.}}{}

\newcommand{\R}{\ensuremath{\mathbb{R}}}
\newcommand{\Q}{\ensuremath{\mathbb{Q}}}
\newcommand{\Z}{\ensuremath{\mathbb{Z}}}
\newcommand{\C}{\ensuremath{\mathbb{C}}}
\newcommand{\N}{\ensuremath{\mathbb{N}}}
\newcommand{\PP}{\ensuremath{\mathbb{P}}}

\usepackage{eurosym}

\newcommand{\holom}[3]{\ensuremath{#1:#2  \rightarrow #3}}
\newcommand{\fibre}[2]{\ensuremath{#1^{-1} (#2)}}

\makeatletter
\ifnum\@ptsize=0 \addtolength{\hoffset}{-0.3cm} \fi \ifnum\@ptsize=2 \addtolength{\hoffset}{0.5cm} \fi \sloppy


\newcommand\sG{{\mathcal G}}

\newcommand\sI{{\mathcal I}}
\newcommand\sT{{\mathcal T}}
\newcommand\sU{{\mathcal U}}

\newcommand\sO{{\mathcal O}}

\newcommand\sV{{\mathcal V}}

\DeclareMathOperator*{\Sym}{Sym}
\DeclareMathOperator*{\pic}{Pic}
\DeclareMathOperator*{\sing}{sing}

\newcommand{\Chow}[1]{\ensuremath{\mbox{\rm Chow}(#1)}}

\usepackage{hyperref}

\setcounter{tocdepth}{1}

 \setlength{\parindent}{0pt}
\setlength{\parskip}{\smallskipamount}

\DeclareMathOperator*{\Rat}{RatCurves^n}

\title{Examples of Fano manifolds with non-pseudoeffective
tangent bundle }
\date{\today}

\author{Andreas H\"oring}
\author{Jie Liu}
\author{Feng Shao}

\address{Andreas H\"oring, Universit\'e C\^ote d'Azur, CNRS, LJAD, France, Institut universitaire de France}
\email{Andreas.Hoering@unice.fr}

\address{Jie Liu, Morningside Center of Mathematics, Academy of Mathematics and Systems Science, Chinese Academy of Sciences, Beijing, 100190, China}
\email{jliu@amss.ac.cn}

\address{Feng Shao, Academy of Mathematics and Systems Science, Chinese Academy of Sciences, Beijing, 100190, China}
\email{shaofeng16@mails.ucas.ac.cn}

\subjclass[2020]{14J45, 14J40, 14E30}
\keywords{Fano manifold, tangent bundle, minimal rational curves, VMRT}

\begin{document}

\begin{abstract}
Let $X$ be a Fano manifold. While the properties of the anticanonical divisor
$-K_X$ and its multiples have been studied by many authors, the positivity
of the tangent bundle $T_X$ is much more elusive. We give a complete characterisation of the pseudoeffectivity of $T_X$ for del Pezzo surfaces, hypersurfaces in the projective space
and del Pezzo threefolds.
\end{abstract}

\newpage

\maketitle

\section{Introduction}

Let $X$ be a Fano manifold. While the properties of the anticanonical divisor
$-K_X$ and its multiples have been studied by many authors, the positivity
of the tangent bundle $T_X$ is much more elusive.
Peternell \cite[Thm.1.3]{Pet12} proved that $T_X$ is always generically ample, i.e. the restriction
$T_X|_C$ to a general complete intersection of sufficiently ample divisors is ample.
On the other hand it is clear that $T_X$ will not admit some positive metric
as this quickly leads to strong restrictions on the geometry of $X$,
see \cite[Thm.1.1]{Mat18}, \cite[Cor.1.3]{Iwa18} \cite[Thm.1.1]{HIM19}.
In this paper we will consider a property that is strictly weaker than the aforementioned metric properties:

\begin{definition}
    Let $X$ be a projective manifold. We say that the tangent bundle is pseudoeffective (resp. big) if the tautological class $c_1(\sO_{\PP(T_X)}(1))$ on the projectivised bundle
    $\PP(T_X)$ is pseudoeffective (resp. big).
\end{definition}

It has been shown by Hsiao \cite[Cor.1.3]{Hsi15} that the tangent bundle of a toric variety is big. In general it is difficult to give a numerical characterisation for bigness
of the tangent bundle,
even in low dimension: if $X$ is a del Pezzo surface, one has
$$
H^2(X, S^m T_X)= H^0(X, K_X \otimes S^m \Omega_X) = 0
$$
for all $m \in \N$ by Peternell's result, so by Riemann-Roch
$$
h^0(X, S^m T_X) \geq \chi(X, S^m T_X) = m^3 (c_1^2(X)-c_2(X)) + \sO(m^2).
$$
Yet since $c_1^2(X)+c_2(X)=12$ by Noether's formula, the condition $c_1^2(X)>c_2(X)$
is satisfied only for del Pezzo surfaces of degree at least seven, which are all toric.

In this paper we give pseudoeffectivity criteria for several families of Fano manifolds.
Our first main result settles the case of surfaces:

\begin{theorem}
	\label{THM:bigness-pseudo-effectivity-del-Pezzo-surfaces}
	Let $X$ be a smooth del Pezzo surface of degree $d$. Then the following holds.
	\begin{enumerate}[(a)]
		\item $T_X$ is pseudoeffective if and only if $d\geq 4$.
		\item $T_X$ is big if and only if $d\geq 5$.
	\end{enumerate}
	Moreover $T_X$ is pseudoeffective if and only if $H^0(X, S^m T_X) \neq 0$ for some $m \in \N$.
\end{theorem}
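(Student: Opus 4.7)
My plan splits the theorem into three tasks, one per regime of $d$, with the ``moreover'' clause following automatically: sections of $S^m T_X$ trivially imply pseudoeffectivity of $\sO_{\PP(T_X)}(1)$, and non-pseudoeffectivity trivially precludes any such section, so the only nontrivial direction in the ``moreover'' is to produce sections in the case $d = 4$.

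\textbf{Bigness for $d \geq 5$.} For $d \geq 6$ the surface is toric, so bigness follows from Hsiao's theorem \cite[Cor.~1.3]{Hsi15}. The new case is $d = 5$, where $X = \mathrm{Bl}_4 \PP^2$. Here I would exploit the five pencils of conics on $X$: a smooth conic $C$ has $C^2 = 0$ and $T_X|_C \cong \sO(2) \oplus \sO$, and the quotient $T_X|_C \twoheadrightarrow \sO_C$ globalises on each pencil to a rational map $\PP(T_X) \dashrightarrow \PP^1$ (sending a tangent direction at a point $x$ to the pencil parameter of the unique member through $x$ tangent to it). Combining enough of these maps produces a generically finite rational map $\PP(T_X) \dashrightarrow (\PP^1)^k$, which yields bigness of $\sO_{\PP(T_X)}(1)$ up to a twist by a pullback from $X$.

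\textbf{Pseudoeffectivity and sections for $d = 4$.} I would construct a nonzero element of $H^0(X, S^m T_X)$ for some $m > 0$ directly, simultaneously giving pseudoeffectivity and the ``moreover'' clause. The construction parallels the $d = 5$ case using the conic pencils of dP$4$; with fewer pencils available, the resulting effective divisor on $\PP(T_X)$ lies on the boundary of the pseudoeffective cone. Non-bigness then follows because the covering family of conics furnishes a movable curve class $M$ on $\PP(T_X)$ with $\sO_{\PP(T_X)}(1) \cdot M = 0$: each smooth conic $C$ lifted via its $\sO$-quotient is a curve of $\sO(1)$-degree zero, and the pencil makes this lift sweep out a movable family. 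Hence $c_1(\sO(1))$ sits on the boundary of the pseudoeffective cone and $\mathrm{vol}(\sO(1)) = 0$.

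\textbf{Non-pseudoeffectivity for $d \leq 3$.} This is the decisive new contribution. On each $(-1)$-curve $C \subset X$ the restriction splits as $T_X|_C \cong \sO(2) \oplus \sO(-1)$, forced by $\Ext^1_{\PP^1}(\sO(-1), \sO(2)) = 0$, and the $\sO(-1)$-quotient produces a rigid section $\sigma_C \subset \PP(T_X)$ with $\sO_{\PP(T_X)}(1) \cdot \sigma_C = -1$. Assuming for contradiction that $\sO(1)$ is pseudoeffective, every $\epsilon > 0$ admits an effective representative $D_\epsilon$ of $c_1(\sO(1)) + \epsilon A$ for a fixed ample $A$; restricted to the Hirzebruch surface $\PP(T_X|_C) \cong \mathbb{F}_3$, on which $\sO(1)$ has class $\sigma_C + 2f$, the restriction $D_\epsilon|_{\mathbb{F}_3}$ is forced to contain the negative section $\sigma_C$. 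Imposing this simultaneously for all 27, 56 or 240 $(-1)$-curves of dP$3$, dP$2$ or dP$1$, and using the intersection pattern of these curves governed by the Weyl group $W(E_{9-d})$, one obtains an incompatible system of containments. The main obstacle is precisely this last step: the individual $\sigma_C$ are rigid in $\PP(T_X)$, and turning their negative $\sO(1)$-degree into a genuine global obstruction to pseudoeffectivity requires a careful analysis of how the $(-1)$-curves fit together.
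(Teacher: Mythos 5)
Your overall division into the three regimes of $d$ is the same as the paper's, and the easy pieces (toricity for $d\geq 6$, the trivial directions of the ``moreover'' clause, the splitting $T_X|_C\cong \sO(2)\oplus\sO(-1)$ on a $(-1)$-curve) are fine. But each of the three substantive cases has a gap, and the one you correctly identify as decisive ($d\leq 3$) is left open. Your route there --- forcing every effective representative of $\zeta+\epsilon A$ to contain the $27$, $56$ or $240$ rigid negative sections $\sigma_C$ and hoping the Weyl-group combinatorics yields a contradiction --- is unlikely to close: containing finitely many rigid curves is a very weak condition on a divisor in a threefold, and no contradiction is in sight. The paper's mechanism is genuinely different: each of the $27$ lines on a cubic surface induces a conic bundle $f_i\colon X\to\PP^1$, whose total dual VMRT $\check{\mathcal{C}}_i\subset\PP(T_X)$ (the divisor of directions tangent to the fibres) has class $\zeta+\pi^*(K_X+2F_i)$ by Corollary \ref{corollaryconicbundle}; intersecting against the nef class $\zeta+\pi^*H$ shows that if $\zeta$ were pseudoeffective, each $\check{\mathcal{C}}_i$ would appear in the negative part of its divisorial Zariski decomposition with coefficient at least $\tfrac14$, and $27\cdot\tfrac14>1=\zeta\cdot(\text{fibre of }\pi)$ gives the contradiction (Proposition \ref{proposition-degree-3}). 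Degrees $1$ and $2$ then follow by blowing down to a cubic surface via Corollary \ref{corollarygodown}, so the $56$ and $240$ curves never enter.

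The positive cases also need repair. For $d=5$ the map $\PP(T_X)\dashrightarrow\PP^1$ you describe is not well defined: a base-point-free pencil of conics has exactly one member through a general point $x$, so ``the member through $x$ tangent to $v$'' does not exist for general $(x,[v])$, and any map factoring through $X$ can never be generically finite on the threefold $\PP(T_X)$. Moreover ``big up to a twist from $X$'' proves nothing unless the twist has the right sign; the crux in the paper is the identity $\sum_{i=1}^5 C_i=-2K_X$, which makes $\sum_i[\check{\mathcal{C}}_i]=5\zeta+\pi^*K_X$ effective, i.e.\ $\zeta$ minus an ample is pseudoeffective, whence bigness by Lemma \ref{lemmasubstractbig}. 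For $d=4$ the analogous numerical input is $C_i+C_i'\in|-K_X|$ for the five pairs of pencils, giving $[\check{\mathcal{C}}_i]+[\check{\mathcal{C}}_i']=2\zeta$ and hence a section of $S^2T_X$; you never produce this. Finally, your non-bigness argument for $d=4$ rests on a false claim: the $\zeta$-degree-zero lifts $B$ of the conics sweep out only the surface $\check{\mathcal{C}}_i$, not all of $\PP(T_X)$, and indeed $[\check{\mathcal{C}}_i]\cdot B=-2<0$, so the class of $B$ is not movable and $\zeta\cdot B=0$ does not place $\zeta$ on the boundary of the pseudoeffective cone. The paper instead shows that any effective $D\sim a\zeta+b\pi^*K_X$ with $b>0$ must contain all ten dual VMRTs and subtracts them to contradict minimality of $a$ (Proposition \ref{proposition-degree-4}).
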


It is well-known that smooth del Pezzo surfaces of degree four and five do not admit non-zero vector fields, so the existence of symmetric tensors may be considered as a surprise. Since del Pezzo surfaces of degree five are not toric, Theorem \ref{THM:bigness-pseudo-effectivity-del-Pezzo-surfaces} gives, to our best knowledge,  the first example of a Fano manifold such that the tangent bundle is big, but $X$ is not almost homogeneous.
In particular our theorem answers questions of Hsiao \cite[Question 5.2]{Hsi15}  and Mallory \cite[Rem.5.9]{Mal20}. Let us also note that Mallory proved, with completely different techniques, that $T_X$ is not big if $d \leq 4$ \cite[Cor.5.7]{Mal20}.
Hosono, Iwai and Matsumura obtained similar results on pseudoeffectivity of the tangent bundle in the metric sense for rational surfaces  \cite[Prop.4.5, Prop.4.8]{HIM19}. For our proof we use the existence of conic bundle structures to construct divisors in $\PP(T_X)$ and compute their cohomology class.

We then turn to the case of higher-dimensional Fano hypersurfaces of the projective space. In this case 
the Picard number is one and the tangent bundle $T_X$ is stable which could be seen as an indicator
for a stronger connection between the positivity of $T_X$ and its determinant $-K_X$.
Moreover the theory of Schur functors can be used to compute explicitly
the space of global sections of certain twists of the symmetric powers of the tangent bundle.

Building on the work of Br\"uckmann and Rackwitz \cite{BrueckmannRackwitz1990}, we obtain partial generalisation of results of Schneider \cite{Schneider1992} and Bogomolov-de Oliveira \cite{BogomolovDeOliveira2008}:

\begin{theorem}
	\label{THM:Vanishing-twisted-symmetric-power-hypersurfaces}
	For $n\geq 2$, let $X\subset \mathbb{P}^{n+1}$ be a smooth hypersurface of degree $d\geq 3$. Then for any integer $k\geq 1$, we have
	\[
	H^0(X,{\rm Sym}^k (T_X\otimes \sO_{X}(d-3)))=0.
	\]
\end{theorem}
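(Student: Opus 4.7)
The plan is to combine the normal bundle sequence, the Euler sequence, and the combinatorial analysis of Br\"uckmann--Rackwitz to reduce the vanishing to a sharp injectivity statement. Since $S^k(T_X \otimes \sO_X(d-3)) \simeq S^k T_X \otimes \sO_X(k(d-3))$, one must show $H^0(X, S^k T_X(k(d-3))) = 0$.

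Writing $\widetilde T := T_{\PP^{n+1}}|_X$, the $k$-th symmetric power of the normal bundle sequence $0 \to T_X \to \widetilde T \to \sO_X(d) \to 0$ yields
\[
0 \to S^k T_X \to S^k \widetilde T \to S^{k-1} \widetilde T(d) \to 0.
\]
After twisting by $\sO_X(k(d-3))$ and taking global sections, it suffices to prove that the induced map
\[
\delta\colon H^0(X, S^k \widetilde T(k(d-3))) \longrightarrow H^0(X, S^{k-1} \widetilde T((k+1)d - 3k))
\]
is injective. Next, the Euler sequence $0 \to \sO_X \to V \otimes \sO_X(1) \to \widetilde T \to 0$ (with $V = H^0(\PP^{n+1}, \sO(1))$), combined with the vanishing $H^1(X, \sO_X(l)) = 0$ valid for $n \geq 2$, gives a uniform polynomial description
\[
H^0(X, S^j \widetilde T(l)) \;\cong\; \operatorname{coker}\Bigl( S^{j-1}V \otimes H^0(\sO_X(j+l-1)) \longrightarrow S^j V \otimes H^0(\sO_X(j+l)) \Bigr),
\]
where the map is induced by the Euler inclusion. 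Under this identification, $\delta$ is precisely contraction with the differential $df \in V \otimes H^0(\sO_X(d-1))$ of the defining equation of $X$.

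The problem therefore reduces to showing that, at the critical twist $l = k(d-3)$, contraction with $df$ defines an injection between the above two cokernels. This is where the work of Br\"uckmann--Rackwitz enters: their explicit combinatorial description of sections of symmetric differentials on smooth hypersurfaces yields the sharp dimension identity forcing $\ker \delta = 0$ at exactly this degree.

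The main obstacle is the sharpness of the bound. For twists only slightly larger than $k(d-3)$, nontrivial symmetric differentials of $T_X$ do exist (reflecting the bigness phenomena discussed elsewhere in the paper), so the argument admits no slack: the injectivity of $\delta$ must exploit simultaneously the Euler relation, the Koszul-type syzygies coming from $f$, and the exact equality $l = k(d-3)$. This is the essential role played by the Br\"uckmann--Rackwitz framework.
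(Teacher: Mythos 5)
Your reduction steps are fine as far as they go: the exact sequence $0 \to S^k T_X \to S^k \widetilde{T} \to S^{k-1}\widetilde{T}\otimes\sO_X(d) \to 0$ obtained from the normal bundle sequence is correct (the quotient is a line bundle, so the symmetric Koszul complex has length one), the cokernel description of $H^0(X, S^j\widetilde{T}(l))$ via the Euler sequence is legitimate because $H^1(X,\sO_X(m))=0$ for $n\geq 2$, and the identification of the boundary map with contraction against $df$ is the expected one. The problem is that all of this merely restates the theorem: the entire content is now the injectivity of $\delta$ at the critical twist $l=k(d-3)$, and at that point you write that Br\"uckmann--Rackwitz's ``explicit combinatorial description'' forces $\ker\delta=0$. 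No such statement exists in their work. Their Theorem 4 is a vanishing theorem for $T$-symmetric tensor \emph{forms}, i.e.\ for $H^0(X,\mathbb{S}_\mu\Omega_X\otimes\sO_X(p))$ with $\mu$ a partition and $p$ below an explicit bound, on complete intersections. It says nothing about symmetric powers of $T_{\PP^{n+1}}|_X$, nothing about contraction with $df$ between spaces of polynomial tensors, and nothing that could be read as a ``sharp dimension identity'' for your two cokernels. As written, the decisive step is an unsupported assertion, so the proof has a genuine gap.

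What makes Br\"uckmann--Rackwitz applicable in the paper's argument is a preliminary translation that your proposal is missing: one first uses the duality $T_X\cong \wedge^{n-1}\Omega_X\otimes\sO_X(n+2-d)$, so that $\Sym^k(T_X\otimes\sO_X(d-3))\cong \Sym^k(\wedge^{n-1}\Omega_X)\otimes\sO_X((n-1)k)$, and then the plethysm identity $\Sym^k(\wedge^{n-1}V)=\mathbb{S}_{\mu}V$ for the rectangular partition $\mu=(k,\dots,k)$ with $n-1$ rows (valid because $\wedge^{n-1}V$ has dimension $n=\dim V$; this is checked by a dimension count together with the reduction lemma for plethysms). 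After this, the object is literally $\mathbb{S}_{\mu}\Omega_X\otimes\sO_X((n-1)k)$, which is exactly the shape of tensor covered by Br\"uckmann--Rackwitz, and their Theorem 4(iii) gives the vanishing for all twists $p\leq (n-1)k$ once $d\geq 3$. If you want to salvage your route, you would have to prove the injectivity of $\delta$ directly (essentially redoing the Jacobian-ring analysis that Schneider and Bogomolov--de Oliveira carry out in their respective ranges), which is a substantial piece of work rather than a citation.
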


In particular we see that none of the symmetric powers of $T_X$ has global sections, so certainly $T_X$ is not big. For $d=3$, the pseudoeffectivity of $T_X$ is more delicate,
but an intersection computation involving the Segre classes allows to conclude:

\begin{theorem}
	\label{thm:Tangent-hypersurface-not-pseudo-effective}
	For $n\geq 2$, let $X\subset \mathbb{P}^{n+1}$ be a smooth hypersurface.
	Then $T_X$ is pseudoeffective if and only if $d\leq 2$.
\end{theorem}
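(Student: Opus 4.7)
The easy direction is immediate: for $d = 1$, $X = \PP^n$ has ample tangent bundle; for $d = 2$, the quadric $X$ is rational homogeneous so $T_X$ is globally generated, hence pseudoeffective. So assume $d \ge 3$ and set $\xi := c_1(\sO_{\PP(T_X)}(1))$, $\pi : \PP(T_X) \to X$. Since $\mathrm{Pic}(X) = \Z \cdot H$, we have $N^1(\PP(T_X))_\R = \R\xi \oplus \R\pi^*H$, so the pseudoeffective cone is a $2$-dimensional closed cone with one boundary ray $\R_{\ge 0}\pi^*H$ and a second ray $\R_{\ge 0}(\xi + \mu^-\pi^*H)$ for some $\mu^- \in \R$. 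A horizontal effective divisor of class $a\xi + b\pi^*H$ with $a \ge 1$ corresponds under $\pi_*$ to a nonzero section of $\mathrm{Sym}^a T_X \otimes \sO_X(b)$, so
\[
  \mu^- \;=\; \inf\bigl\{\,b/a : a\ge 1,\; H^0(X,\mathrm{Sym}^a T_X \otimes \sO_X(b)) \ne 0\,\bigr\}.
\]
Pseudoeffectivity of $T_X$ is equivalent to $\mu^- \le 0$, so the task is to show $\mu^- > 0$ whenever $d \ge 3$.

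For $d \ge 4$, the vanishing in Theorem \ref{THM:Vanishing-twisted-symmetric-power-hypersurfaces} reads $H^0(X, \mathrm{Sym}^k T_X \otimes \sO_X(k(d-3))) = 0$ for every $k \ge 1$. A nonzero section of $\mathrm{Sym}^a T_X \otimes \sO_X(b)$ with $b < a(d-3)$ would, after multiplication by a nonzero section of $\sO_X(a(d-3)-b)$, produce a nonzero section of $\mathrm{Sym}^a T_X \otimes \sO_X(a(d-3))$, a contradiction. Hence $\mu^- \ge d - 3 > 0$, settling the case $d \ge 4$.

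The case $d = 3$ is more delicate because Theorem \ref{THM:Vanishing-twisted-symmetric-power-hypersurfaces} only yields $\mu^- \ge 0$. The plan is to construct a class $\gamma$ in the closed cone of movable $1$-cycles on $\PP(T_X)$ with $\xi \cdot \gamma < 0$; by the BDPP duality between $\overline{\mathrm{Eff}}(\PP(T_X))$ and $\overline{\mathrm{Mov}}_1(\PP(T_X))$ this contradicts $\xi$ being pseudoeffective. The natural candidate is a complete-intersection class $\gamma_\tau := (\xi + \tau\pi^*H)^{2n-2}$ for $\tau$ in the nef cone (the product of nef divisors being a nef $1$-cycle). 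Expanding via the projection formula and the identity $\pi_*\xi^{n-1+i} = s_i(T_X^\vee)$ reduces $\xi \cdot \gamma_\tau$ to a polynomial in $\tau$ whose coefficients are the Segre intersection numbers $s_i(T_X^\vee) \cdot H^{n-i}$, all of which can be read off the generating series
\[
  s(T_X^\vee) = \frac{1 - 3H}{(1-H)^{n+2}}
\]
restricted to $X$. The decisive Segre inequality is
\[
  \binom{2n+1}{n} \;<\; 3\binom{2n}{n-1}\qquad(n \ge 2),
\]
which is exactly the negativity of the top Segre number $\xi^{2n-1} = s_n(T_X^\vee) \cdot d$ and makes the polynomial $P(\tau) := \xi \cdot \gamma_\tau$ strictly negative at $\tau = 0$.

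The crux of the $d = 3$ argument -- and the main obstacle -- is to promote the numerical negativity at $\tau = 0$ to negativity against an actual nef class. Concretely one must either bound the nef threshold $\tau^* := \inf\{\tau : \xi + \tau\pi^*H \text{ nef}\}$ from above by the first positive zero of $P(\tau)$, or find a better combination $(\pi^*H)^j \cdot (\xi + \tau\pi^*H)^{2n-2-j}$ whose $\xi$-pairing remains negative for some nef value of $\tau$. This requires controlling the positivity of $T_X \otimes \sO_X(\tau)$ on small covering families of curves in the cubic hypersurface $X$; I expect this to be the technical heart of the Segre-class intersection argument alluded to in the introduction. Once a suitable $\gamma$ is produced, $\xi \cdot \gamma < 0$ contradicts $T_X$ being pseudoeffective and the proof is complete.
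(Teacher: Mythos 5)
Your treatment of $d\le 2$ and of $d\ge 4$ is correct and coincides with the paper's (the $d\ge 4$ case is exactly Theorem \ref{THM:Vanishing-twisted-symmetric-power-hypersurfaces} combined with the characterisation of pseudoeffectivity in Lemma \ref{lemmacharacterisepseff}). The gap is in the case $d=3$, and it is not merely that you stopped short of the "technical heart": the route you propose cannot be completed in the form stated. By the Corollary following Lemma \ref{lemma:freeness-twist-tangent-bundle}, for $n\ge 3$ the nef cone of $\PP(T_X)$ is exactly the cone spanned by $\pi^*H$ and $\zeta+\pi^*H$, so your nef threshold is $\tau^*=1$ and the only complete-intersection movable classes available on $\PP(T_X)$ itself are nonnegative combinations of $(\pi^*H)^j\cdot(\zeta+\pi^*H)^{2n-2-j}$. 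These all pair \emph{nonnegatively} with $\zeta$: for the cubic threefold ($n=3$) one computes from Lemma \ref{lemmasegreclasses} that $\zeta^5=-30$, $\pi^*H\cdot\zeta^4=0$, $(\pi^*H)^2\cdot\zeta^3=6$, $(\pi^*H)^3\cdot\zeta^2=3$, whence
\begin{equation*}
\zeta\cdot(\zeta+\pi^*H)^4=18,\quad \pi^*H\cdot\zeta\cdot(\zeta+\pi^*H)^3=27,\quad (\pi^*H)^2\cdot\zeta\cdot(\zeta+\pi^*H)^2=12,\quad (\pi^*H)^3\cdot\zeta\cdot(\zeta+\pi^*H)=3,
\end{equation*}
all positive. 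So no product of nef divisors on $\PP(T_X)$ witnesses non-pseudoeffectivity, and the negativity of $P(0)=\zeta^{2n-1}$ is irrelevant because $\tau=0$ is far outside the nef range (there are lines of the second type with $\zeta\cdot\overline{l}=-1$). A BDPP-type witness would have to come from a birational model, which is a much harder object to produce.

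The paper's argument is structurally different and this is the idea you are missing. Since $T_X$ is not big (Theorem \ref{THM:Vanishing-twisted-symmetric-power-hypersurfaces} plus Lemma \ref{lemmasubstractbig}), if $\zeta$ were pseudoeffective it would span an extremal ray of the two-dimensional pseudoeffective cone, and by the divisorial Zariski decomposition (Lemma \ref{lemmaextremalclass}, Corollary \ref{corollaryextremalclass}) an extremal class is either modified nef or proportional to a single prime divisor. The first alternative is excluded by the inequality $\zeta^2\cdot(\zeta+\pi^*H)^{2n-3}<0$ of Proposition \ref{prop:cubic-not-modified-nef} -- note that this is a \emph{two}-fold self-intersection against a nef class, which obstructs modified nefness via \eqref{inequalitymodifiednef} but says nothing directly about movable curves; the second alternative gives $m\zeta\sim D$ effective, i.e. $H^0(X,S^mT_X)\ne 0$, contradicting the vanishing theorem again. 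Finally, your framework assumes $\pic(X)=\Z H$ throughout, so it does not cover $n=2$: the cubic surface has Picard number $7$ and is handled in the paper by a separate argument (Proposition \ref{proposition-degree-3}) using the $27$ conic-bundle structures and their total dual VMRTs.
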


This statement does not generalise to Fano manifolds that are complete intersections:
del Pezzo surfaces of degree four are anticanonically embedded as a complete intersection
of two quadrics in $\PP^4$, but by Theorem \ref{THM:bigness-pseudo-effectivity-del-Pezzo-surfaces} their tangent bundle is pseudoeffective (see also Remark \ref{remarkschneider}).
Let us also note that Lehmann and Ottem proved that for a Fano hypersurface of degree $d \geq 3$
and $\dim X \leq 5$, the diagonal $\Delta \subset X \times X$ is not big \cite[Thm.1.10]{LehmannOttem2018},
but we do not know if their result can be related to our theorem.

As a first step towards a general theory on higher dimensional Fanos, we consider del Pezzo threefolds with Picard number one. Since these manifolds do not possess any natural fibrations,
we consider the notion of total dual VMRT in Subsection \ref{subsectionVMRT}.
Total dual VMRT provide a natural construction for divisors in $\PP(T_X)$ and we introduce
a technique to compute their cohomology class. As an application we obtain our last result:

\begin{theorem}
	\label{THM:bigness-pseudo-effectivity-del-Pezzo-threefolds}
	Let $X$ be a $3$-dimensional del Pezzo manifold, i.e. a smooth Fano threefold such
	that $-K_X = 2 H$ where $H$ is a Cartier divisor. Then the following holds.
	\begin{enumerate}[(a)]
		\item Assume that $X$ is general in its deformation family if $d=2$. Then $T_X$ is pseudoeffective if and only if $d\geq 4$.
		
		\item $T_X$ is big if and only if $d\geq 5$.
	\end{enumerate}
    Moreover $T_X$ is pseudoeffective if and only if $H^0(X, S^m T_X) \neq 0$ for some $m \in \N$.
\end{theorem}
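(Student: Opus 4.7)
The plan is to work on the projective bundle $\pi\colon\PP(T_X)\to X$ and to exploit that $\pic(X)=\Z\cdot H$, so $N^1(\PP(T_X))_\R$ is $2$-dimensional with basis $\{\zeta,\pi^*H\}$, where $\zeta:=c_1(\sO_{\PP(T_X)}(1))$. Consequently $\overline{\mathrm{Eff}}(\PP(T_X))$ is a closed convex $2$-dimensional cone with one extremal ray equal to $\R_{\geq 0}\cdot\pi^*H$; writing the opposite extremal ray as $\R_{\geq 0}\cdot(\zeta+c\,\pi^*H)$, the whole theorem reduces to pinning down the sign of $c$, since $\zeta$ is not pseudoeffective when $c>0$, on the boundary (pseudoeffective but not big) when $c=0$, and in the interior (big) when $c<0$.

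To determine $c$ I would use the total dual VMRT $\check{\sC}\subset\PP(T_X)$ attached to the minimal family of lines $\mathcal K$, as developed in Subsection \ref{subsectionVMRT}. It is an irreducible effective divisor and its class $[\check{\sC}]=a\zeta+b\pi^*H$ can be read off from the splitting type of $T_X|_\ell$ on a general line $\ell\in\mathcal K$, the dimension of $\mathcal K$, and the degree of the VMRT $\sC_x\subset\PP(T_{X,x})$ at a general point. The plan is to run this computation case by case for $d\in\{1,\ldots,5\}$ and, normalising to $[\check{\sC}]\propto\zeta+c\,\pi^*H$, to find $c>0$ for $d\leq 3$, $c=0$ for $d=4$, and $c<0$ for $d=5$. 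This gives an upper bound on the slope of the pseudoeffective cone; to upgrade it to an identification of the extremal ray — i.e.\ to rule out any hypothetical effective class of smaller slope — I would restrict to a general line $\ell$, use the Harder--Narasimhan filtration of $T_X|_\ell$ to describe $\overline{\mathrm{Eff}}(\PP(T_X|_\ell))$, and pair this against $[\check{\sC}]$.

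The ``moreover'' statement then follows from the same picture. For $d\leq 3$ (with $d=2$ general), $\zeta$ is not pseudoeffective, so no multiple $m\zeta$ is effective and $H^0(X,S^mT_X)=H^0(\PP(T_X),m\zeta)=0$ for every $m\geq 1$. For $d=5$, bigness of $\zeta$ gives $H^0(X,S^mT_X)\neq 0$ for all sufficiently large $m$; one can alternatively cite the $\mathrm{PGL}_2$-almost-homogeneity of the quintic $V_5$ to exhibit invariant symmetric tensors explicitly. In the borderline case $d=4$, the divisor $\check{\sC}$ has class a positive multiple of $\zeta$ itself (the $\pi^*H$-coefficient vanishes), so pushing $\check{\sC}$ down through $\pi_*$ directly produces a nonzero section of $S^mT_X$ for the corresponding $m$.

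The main obstacle I foresee is the case-by-case intersection-theoretic calculation of $[\check{\sC}]$ for the lower-degree cases $V_1, V_2, V_3$: one must identify precisely the structure of the VMRT at a general point and the splitting of $N_{\ell/X}$ on a general line. The generality hypothesis for $d=2$ enters exactly here — for a special branched double cover, the splitting type on a general line can jump and in principle shift $c$. A secondary difficulty is the extremality certification for $\check{\sC}$: ruling out effective divisors of smaller slope requires restriction to a sufficiently mobile family of curves, combined with a careful analysis of the positivity of $\mathcal K$ itself.
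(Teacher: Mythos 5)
Your reduction to locating the second extremal ray $\R_{\geq 0}(\zeta+c\,\pi^*H)$ of $\overline{\mathrm{Eff}}(\PP(T_X))$ is the right frame, and your use of the total dual VMRT does recover the paper's positive results: for $d=5$ one gets $[\check{\sC}]=3\zeta-\pi^*H$, hence bigness, and for $d=4$ one gets $[\check{\sC}]=4\zeta$, hence pseudoeffectivity, a section of $S^4T_X$, and (via the minimal-section argument of Corollary \ref{corollarytangentnotbig}) non-bigness. But your central claim --- that $[\check{\sC}]$ is proportional to the extremal generator $\zeta+c\,\pi^*H$, so that the sign of its $\pi^*H$-coefficient decides pseudoeffectivity --- is false for $d\leq 2$. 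For $d=1$ the paper exhibits an irreducible effective divisor in $|\zeta+\pi^*H|$ (Proposition \ref{prop:nef-cone-del-Pezzo-degree-one}(a)), of slope $1$, whereas $[\check{\sC}]=60\zeta+m\pi^*H$ with $m\geq 180$ has slope at least $3$; similarly for $d=2$ the linear system $|\zeta+\pi^*H|$ is nonempty while $[\check{\sC}]=12\zeta+16\pi^*H$ has slope $4/3$. So $\check{\sC}$ does not generate the extremal ray, and knowing its class gives no obstruction to $\zeta$ being pseudoeffective.

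Your proposed mechanism for certifying extremality also cannot work: on a general line $\ell$ one has $T_X|_\ell\cong\sO_{\PP^1}(2)\oplus\sO_{\PP^1}^{\oplus 2}$, which is nef, so $\zeta|_{\PP(T_X|_\ell)}$ is nef and the restriction to a mobile family of lines detects nothing; the Harder--Narasimhan slopes only bound the cone from the side $\zeta - t\pi^*H$, $t>2$, which is the wrong direction. The actual negative arguments in the paper are of a different nature: for $d=3$ the cubic threefold is handled by the vanishing $H^0(X,\Sym^k(T_X\otimes\sO_X(d-3)))=0$ (Theorem \ref{THM:Vanishing-twisted-symmetric-power-hypersurfaces}) together with the Segre-class computation showing $\zeta$ is not modified nef and the dichotomy of Corollary \ref{corollaryextremalclass}; for $d=1$ and $d=2$ one needs the explicit geometry of the double cover, partial nefness statements for $\zeta+k\pi^*H$, and negative intersection numbers of products of pseudoeffective and nef classes (e.g.\ $D_1\cdot D_2\cdot(\zeta+3\pi^*H)^2\cdot(\zeta+4\pi^*H)=-11$ for $d=1$). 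None of this is reachable from the VMRT class alone, so the cases $d\leq 3$ remain unproved in your outline. (You also leave the del Pezzo threefolds with $\rho(X)\geq 2$, i.e.\ $d\geq 6$, unaddressed, though these are toric or $\PP(T_{\PP^2})$ and easy.)
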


The most striking feature of this statement is its similarity to the surface
case (Theorem \ref{THM:bigness-pseudo-effectivity-del-Pezzo-surfaces}), although there is no immediate connection between the two situations:
a general element of the linear system $|H|$ is a del Pezzo surface $D$, but
$T_D$ is a {\em subbundle} of $T_X|_D$, so we can not expect any relation between their positivity properties. However, we prove in Lemma \ref{lemmaclassdelpezzo} that the
class of the total dual VMRT depends on the number of $(-1)$-curves in $D$. 

We expect that bigness of the tangent bundle is a rather restrictive property, so it would be very interesting to have a more geometric characterisation.
Greb and Wong \cite{GrebWong2019} show that the affineness of a canonical complex extension
implies that the tangent bundle is big \cite[Cor.4.4]{GrebWong2019}. Our results indicate that many Fano manifolds do not admit such affine  extensions.

{\bf Acknowledgements.} 
The first-named author thanks the Institut Universitaire de France and the A.N.R. project Foliage (ANR-16-CE40-0008) for providing excellent working conditions.
The second-named author is partially supported by China Postdoctoral Science Foundation. The third-named author is grateful to Laurent Manivel for valuable discussions and also thanks his advisor Baohua Fu for many helpful suggestions.

\section{Definitions and basic facts} \label{sectiondefinitions}

We work over $\C$, for general definitions we refer to \cite{Har77}. 
We use the terminology of \cite{KM98} for birational geometry and of \cite{Ko96} for rational curves.
We refer to Lazarsfeld's books for notions of positivity of $\R$-divisors, in particular
\cite[Sect.2.2]{Laz04a} for pseudoeffectivity and the associated cones.
We will use the terminology of $\Q$-twisted vector bundles as explained in
\cite[Sect.6.2]{Laz04b}

Manifolds and varieties will always be supposed to be irreducible. A fibration is a proper surjective map with connected fibres \holom{\varphi}{X}{Y} between normal varieties.

In the whole paper we will use the following notational convention:
given a normal projective variety $X$
and a vector bundle $E \rightarrow X$, we denote by
$\holom{\pi}{\PP(E)}{X}$ the projectivisation in the sense of Grothendieck and by
$\zeta=c_1(\sO_{\PP(E)}(1))$ the tautological class on
$\PP(E)$.

\subsection{Positivity of vector bundles}

\begin{definition}
    Let $X$ be a normal projective variety, and let $E \rightarrow X$ be a vector bundle. We say that $E$ is pseudoeffective if the tautological
    class $\zeta$ is pseudoeffective. The vector bundle is big if
    $\zeta$ is big.
\end{definition}

 \begin{lemma} \cite[Lemma 2.7]{Druel2018}
 	\label{lemmacharacterisepseff}
     Let $X$ be a normal projective variety, and let $E \rightarrow X$ be a vector bundle over $X$. Let $H$ be a big $\Q$-Cartier $\Q$-divisor class.
     Then $E$ is pseudoeffective if and only if for all $c>0$ there exist sufficiently divisible positive integers $i,j \in \N$
     such that $i>cj$
     $$
     H^0(X, S^i E \otimes \sO_X(jH)) \neq 0.
     $$
 \end{lemma}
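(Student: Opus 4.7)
The plan is to use the projection formula identification
$$
H^0(\PP(E), \sO_{\PP(E)}(i) \otimes \pi^*\sO_X(jH)) = H^0(X, S^i E \otimes \sO_X(jH))
$$
to translate the question into the existence of effective $\Q$-divisors on $\PP(E)$ whose classes approximate $\zeta$. The key auxiliary step is to observe that, since $H$ is big on $X$, there exists $t_0 > 0$ such that $\zeta + t \pi^*H$ is big on $\PP(E)$ for every $t \geq t_0$: writing $H \equiv A + E'$ with $A$ an ample $\Q$-divisor and $E'$ effective, the $\pi$-relative ampleness of $\zeta$ yields that $\zeta + t \pi^*A$ is ample for $t \gg 0$, and adding the effective class $t \pi^*E'$ preserves bigness.

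For the implication $(\Rightarrow)$, assume $\zeta$ is pseudoeffective and fix $c > 0$. For $\delta \in (0, t_0]$, the convex decomposition
$$
\zeta + \delta \pi^*H = (1 - \delta/t_0)\,\zeta + (\delta/t_0)(\zeta + t_0 \pi^*H)
$$
exhibits $\zeta + \delta \pi^*H$ as a sum of a pseudoeffective and a big class, hence big. Choosing any rational $\delta < 1/c$ and any sufficiently divisible $n$, bigness yields a nonzero section of $\sO_{\PP(E)}(n) \otimes \pi^*\sO_X(n\delta H)$; by the projection formula this is a nonzero section of $S^n E \otimes \sO_X(n\delta H)$, so setting $i = n$ and $j = n\delta$ delivers the required pair with $i/j = 1/\delta > c$.

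For $(\Leftarrow)$, each $c > 0$ yields integers $i, j$ with $i > cj$ and a nonzero section of $S^i E \otimes \sO_X(jH)$; via the projection formula this corresponds to a nonzero section of $\sO_{\PP(E)}(i) \otimes \pi^*\sO_X(jH)$, whose zero locus is an effective divisor of class $i\zeta + j\pi^*H$ on $\PP(E)$. Hence $\zeta + (j/i)\pi^*H$ is pseudoeffective with $j/i < 1/c$, and letting $c \to \infty$ through a sequence and using closedness of the pseudoeffective cone in $N^1(\PP(E))_\R$, the limit $\zeta$ is pseudoeffective.

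The only real obstacle is the auxiliary bigness of $\zeta + t\pi^*H$, because $\pi^*H$ itself is never big on $\PP(E)$ (its top self-intersection vanishes along the fibres of $\pi$). Once this bridge is in place, everything else is a clean interpolation between the big and pseudoeffective cones combined with the projection formula.
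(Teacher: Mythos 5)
Your proof is correct and follows essentially the same route as the paper's: both directions rest on the closedness of the pseudoeffective cone, the identification $\pi_*\sO_{\PP(E)}(i)\cong S^iE$, and the fact that $\zeta+t\pi^*H$ becomes big for $t\gg 0$ (via Kodaira's lemma applied to $H$ together with the relative ampleness of $\zeta$). The only cosmetic difference is that the paper splits the forward direction into an ample case and a big case joined by an injection of section spaces, whereas you absorb the Kodaira decomposition into the bigness of $\zeta+t_0\pi^*H$ at the outset and then conclude by convexity of the cones; the underlying mechanism is identical.
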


 \begin{proof}
 The statement is proven in Druel's paper under the assumption that $H$ is an ample Cartier divisor, we follow his argument.

 Since all the properties in the statement are invariant under birational modifications, we can assume without loss of generality that $X$ is smooth.

     If the non-vanishing condition holds, the $\Q$-divisor class $i \zeta + \pi^* jH$ is represented by an effective divisor. Thus the class
     $\zeta+\frac{j}{i} \pi^* H$ is pseudoeffective.
     Since $\frac{j}{i}<\frac{1}{c}$ and $c$ can be arbitrarily large, this shows that $\zeta$ is pseudoeffective.

     Assume now that $\zeta$ is pseudoeffective.

     {\em 1st case. $H$ is ample.} By \cite[Prop.1.45]{KM98} there exists a $m_0 \in \N$ such that $\zeta+ m_0 \pi^* H$ is ample. Since $\zeta$ is pseudoeffective, the class
     $m \zeta+ m_0 \pi^* H$ is big for all $m \in \N$. In particular for a given $c>0$ we can choose $m \in \N$ such that $m > c m_0$ and $m \zeta+ m_0 \pi^* H$ is big. Thus for some $k \in \N$ one has
     $$
     0 \neq H^0(\PP(E), \sO_{\PP(E)}(k (m \zeta+ m_0 \pi^* H)))
     =
     H^0(X, S^{km} E \otimes \sO_X(k m_0)).
     $$
     {\em 2nd case. $H$ is big.}
     By Kodaira's lemma \cite[Cor.2.2.7]{Laz04a} we have
     $H \sim_\Q A + N$ where $A$ is ample and $N$ is effective. Applying the first case to $A$ we see that for $c>0$ we can find sufficiently divisible $i,j$ such that $i>cj$ and
     $H^0(X, S^i E \otimes \sO_X(jA)) \neq 0$.
     Since $j$ is sufficiently divisible and $N$ is effective, we have an injection
     $$
     H^0(X, S^i E \otimes \sO_X(jA))
     \hookrightarrow
     H^0(X, S^i E \otimes \sO_X(jH)).
     $$
     This completes the proof of the lemma.
 \end{proof}

 \begin{lemma}\label{lemmasubstractbig}
  Let $X$ be a normal projective variety, and let $E \rightarrow X$ be a vector bundle over $X$.
 Then $E$ is big if and only if there exists a big $\Q$-Cartier $\Q$-divisor class $H$ on $X$ such that $\zeta-\pi^* H$ is pseudoeffective.
 \end{lemma}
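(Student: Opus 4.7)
The plan is to prove both directions by exploiting the $\pi$-relative ampleness of the tautological class $\zeta$, which lets us pass between positivity on $\PP(E)$ and positivity on the base $X$ by adding sufficiently large multiples of $\pi^* A$ for $A$ ample on $X$.

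For the ``if'' direction, suppose that $\zeta - \pi^* H$ is pseudoeffective for some big $\Q$-Cartier $\Q$-divisor class $H$ on $X$. First I would apply Kodaira's lemma \cite[Cor.2.2.7]{Laz04a} on $X$ to write $H \sim_\Q A + N$ with $A$ ample and $N$ effective; then
$\zeta - \pi^* A = (\zeta - \pi^* H) + \pi^* N$
remains pseudoeffective, so we may assume $H = A$ is ample. Next, using that $\zeta$ is $\pi$-ample, invoke \cite[Prop.1.45]{KM98} to find $m_0 \in \N$ such that $\zeta + m_0 \pi^* A$ is ample on $\PP(E)$. The identity
$$
(m_0 + 1)\zeta \; = \; m_0 (\zeta - \pi^* A) \; + \; (\zeta + m_0 \pi^* A)
$$
then exhibits $(m_0+1)\zeta$ as a sum of a pseudoeffective class and an ample class, which is big; hence $\zeta$ is big.

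For the ``only if'' direction, assume $\zeta$ is big. I would fix any ample $\Q$-divisor $A$ on $X$ and, by the same $\pi$-ampleness argument, pick $m_0 \geq 1$ with $\zeta + m_0 \pi^* A$ ample on $\PP(E)$. Since $\zeta$ is big, Kodaira's lemma on $\PP(E)$ furnishes an integer $N \geq 2$ such that $N \zeta - (\zeta + m_0 \pi^* A)$ is $\Q$-linearly equivalent to an effective divisor; equivalently, $(N-1)\zeta - m_0 \pi^* A$ is effective. Dividing by $N-1$ and setting $H := \tfrac{m_0}{N-1}\, A$ produces an ample (hence big) $\Q$-divisor on $X$ with $\zeta - \pi^* H$ pseudoeffective.

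I do not expect either direction to be delicate: the whole argument amounts to two applications of Kodaira's lemma, one on $X$ and one on $\PP(E)$, interfaced through the $\pi$-ampleness of $\zeta$. The structure closely mirrors the proof of Lemma \ref{lemmacharacterisepseff}, with the role of ``sufficient multiples of $H$'' replaced by ``sufficient multiples of $A$ needed to make $\zeta + m_0 \pi^* A$ ample''.
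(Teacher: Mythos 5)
Your proof is correct and follows essentially the same route as the paper: for the main implication both arguments combine Kodaira's lemma on $X$ with \cite[Prop.1.45]{KM98} to produce an ample class $\zeta+m_0\pi^*A$ and then write a positive multiple of $\zeta$ as a pseudoeffective class plus an ample one. The only (harmless) difference is in the easy direction, where the paper simply invokes openness of bigness to perturb $\zeta$ by $-\varepsilon\pi^*A$, whereas you run Kodaira's lemma on $\PP(E)$ to the same effect.
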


\begin{proof}
    Since bigness is an open property one implication is trivial. Assume now that $\zeta-\pi^* H$
    is pseudoeffective for some big $\Q$-Cartier $\Q$-divisor class $H$.
    By Kodaira's lemma \cite[Cor.2.2.7]{Laz04a} we have $H \sim_\Q A + N$ where $A$ is ample and $N$ is effective. By \cite[Prop.1.45]{KM98} there exists a $m_0 \in \N$ such that $\zeta+ m_0 \pi^* A$ is ample. In particular the sum
    $$
    m_0 (\zeta-\pi^* H) + (\zeta+ m_0 \pi^* A)
    = (m_0+1) \zeta - m_0 \pi^* N
    $$
    is big. Since $ - m_0 \pi^* N$ is antieffective, this implies that  $(m_0+1) \zeta$ is big.
\end{proof}

 \begin{corollary} \label{corollarygodown}
     Let $\holom{\mu}{X}{X'}$ be a birational morphism between projective manifolds.
     If $T_X$ is pseudoeffective, then $T_{X'}$ is pseudoeffective.
 \end{corollary}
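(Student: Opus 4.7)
The plan is to rephrase pseudoeffectivity via existence of sections using Lemma \ref{lemmacharacterisepseff}, then transport sections from $X$ down to $X'$ through the differential of $\mu$.

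First, I choose any big $\Q$-Cartier $\Q$-divisor class $H'$ on $X'$. Its pullback $\mu^* H'$ is big on $X$, since $h^0(X, m\mu^* H') \geq h^0(X', m H')$ grows like $m^{\dim X}$ in $m$. Applying Lemma \ref{lemmacharacterisepseff} to $T_X$ and $\mu^* H'$, for every $c>0$ I obtain sufficiently divisible $i,j \in \N$ with $i>cj$ and a nonzero section
$$
s \in H^0\bigl(X,\, S^i T_X \otimes \mu^* \sO_{X'}(jH')\bigr).
$$

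Next I use the differential $d\mu \colon T_X \to \mu^* T_{X'}$, which arises by dualising the injection $\mu^*\Omega_{X'}\hookrightarrow \Omega_X$ coming from the relative cotangent sequence. Since $\mu$ is an isomorphism over a dense open subset of $X'$, the map $d\mu$ is a morphism of locally free sheaves of the same rank which is generically an isomorphism; as $T_X$ is torsion free, $d\mu$ is injective. The induced map on symmetric powers $S^i T_X \to \mu^* S^i T_{X'}$ is again a morphism of locally free sheaves (its determinant is a power of $\det(d\mu)$) which is generically an isomorphism, hence injective. Twisting by $\mu^* \sO_{X'}(jH')$, the section $s$ yields a nonzero section of $\mu^*\bigl(S^i T_{X'} \otimes \sO_{X'}(jH')\bigr)$. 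Finally, since $X'$ is smooth and $\mu$ is birational, $\mu_* \sO_X = \sO_{X'}$, so the projection formula gives
$$
H^0\bigl(X',\, S^i T_{X'} \otimes \sO_{X'}(jH')\bigr) \;=\; H^0\bigl(X,\, \mu^*(S^i T_{X'} \otimes \sO_{X'}(jH'))\bigr) \;\neq\; 0.
$$
Since $i > cj$ and $c$ was arbitrary, Lemma \ref{lemmacharacterisepseff} applied in the reverse direction to $T_{X'}$ and $H'$ gives that $T_{X'}$ is pseudoeffective.

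The only point requiring actual verification is that $S^i(d\mu)$ remains injective. This is not automatic for symmetric powers of arbitrary injections of locally free sheaves, but it holds here because $d\mu$ is generically an isomorphism, so its symmetric powers are too, and any kernel would be a torsion subsheaf of the locally free sheaf $S^i T_X$. With this small check in place, the rest of the argument is a direct transfer of sections along $\mu$.
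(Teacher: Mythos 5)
Your proof is correct and follows essentially the same strategy as the paper: both reduce pseudoeffectivity to the non-vanishing criterion of Lemma \ref{lemmacharacterisepseff} and then transfer sections of twisted symmetric powers of the tangent bundle from $X$ down to $X'$ via the projection formula and $\mu_*\sO_X=\sO_{X'}$. The only difference is in how the comparison map is built: you work upstairs with $S^i(d\mu)\colon S^iT_X\hookrightarrow \mu^*S^iT_{X'}$, injective because it is generically an isomorphism out of a torsion-free sheaf, whereas the paper constructs the injection downstairs as $\mu_*(S^iT_X)\hookrightarrow S^iT_{X'}$ using reflexivity of $S^iT_{X'}$ and the fact that the image of the exceptional locus has codimension at least two --- both produce the same inclusion on global sections.
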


\begin{proof}
Denote by $Z \subset X'$ the image of the exceptional locus. For every $i \in \N$ the direct image $\mu_* (S^i T_X)$ is a torsion-free sheaf that coincides with
$S^i T_{X'}$ on $X' \setminus Z$. Since $S^i T_{X'}$ is reflexive and $Z$ has codimension at least two, we obtain an injective morphism
$$
\mu_* (S^i T_X) \hookrightarrow S^i T_{X'}.
$$
Fix $H$ an ample Cartier divisor on $X'$. Let $c>0$ be an arbitrary positive number.
Since $T_X$ is pseudoeffective, we know by Lemma \ref{lemmacharacterisepseff} that there exist
sufficiently divisible positive integers $i,j \in \N$
     such that $i>cj$ and
     $$
     H^0(X, S^i T_X \otimes \sO_X(j \mu^* H)) \neq 0.
     $$
     By the projection formula and the injection constructed above, this implies that
     $$
     H^0(X', S^i T_{X'} \otimes \sO_{X'}(jH)) \neq 0.
     $$
     By Lemma \ref{lemmacharacterisepseff} this shows that $T_{X'}$ is pseudoeffective.
\end{proof}

Let $X$ be a projective manifold of dimension $n$, and let $\alpha$ be an $\R$-divisor class on $X$ that is pseudoeffective.
By  \cite[Thm.3.12]{Bou04} we have a divisorial Zariski decomposition
\begin{equation} \label{Bdecomposition}
\alpha = \sum_{j=1}^r \lambda_j D_j + P,
\end{equation}
where the $D_j$ are prime divisors on $X$, the coefficients $\lambda_j \in \R^+$ and $P$ is a
pseudoeffective class which is modified nef \cite[Defn. 2.2.,Prop.3.8]{Bou04}.
By \cite[Prop.2.4]{Bou04} this implies that for every prime divisor $D \subset X$ the restriction $P|_D$ is pseudoeffective. In particular for any collection $H_1, \ldots, H_{m-2}$ of nef divisor classes, one has
\begin{equation} \label{inequalitymodifiednef}
P^2 \cdot H_1 \cdot \ldots H_{m-2} \geq 0.
\end{equation}

\begin{lemma} \label{lemmaextremalclass}
    Let $X$ be a projective manifold, and let $\alpha$ be a pseudoeffective $\R$-divisor
    class that generates an extremal ray $R$ in the pseudoeffective cone. Then $\alpha$ is modified nef or there exists a unique prime divisor $D_1 \subset X$
    such that $R=\R^+ D_1$.
\end{lemma}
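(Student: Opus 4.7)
The plan is to apply the divisorial Zariski decomposition \eqref{Bdecomposition} and exploit the extremality of $R$. Write
$$
\alpha \;=\; \sum_{j=1}^r \lambda_j D_j + P,
$$
with $\lambda_j>0$, $D_j$ distinct prime divisors, and $P$ modified nef. If $r=0$ then $\alpha = P$ is modified nef and we are done. Otherwise, each summand $\lambda_j[D_j]$ and $[P]$ is pseudoeffective and their sum $\alpha$ generates the extremal ray $R=\R^+\alpha$, so extremality forces each to be a non-negative multiple of $\alpha$; in particular $[D_j]\in R$ for every $j=1,\ldots,r$.

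The crucial input at this stage is Boucksom's result that the classes $[D_1],\ldots,[D_r]$ appearing in the Zariski negative part are linearly independent in $N^1(X)_\R$ (see \cite[Prop.3.13]{Bou04}); since they all lie on the one-dimensional ray $R$, we are forced to have $r=1$. Writing $[P]=c\,\alpha$ with $0\le c<1$ (using again that each summand is a non-negative multiple of $\alpha$ and that $\lambda_1[D_1]\neq 0$) and rearranging $\alpha = \lambda_1[D_1]+c\,\alpha$ yields $\alpha=\frac{\lambda_1}{1-c}[D_1]$, so $R=\R^+ D_1$.

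For the uniqueness, suppose $D'\neq D_1$ is another prime divisor with $[D']=\mu\,\alpha\in R$ for some $\mu>0$. Positive homogeneity of the Zariski decomposition gives that the decomposition of $[D']$ is $\mu\lambda_1 D_1 + \mu P$, so Boucksom's multiplicity satisfies $\nu([D'],D_1)=\mu\lambda_1>0$. On the other hand $D'$ is itself an effective representative of its class, and the standard bound $\nu([E],D)\le\mathrm{mult}_D(E)$ for an effective divisor $E$ gives $\nu([D'],D_1)\le\mathrm{mult}_{D_1}(D')=0$ since $D'$ is prime and distinct from $D_1$. This contradiction completes the proof. The main obstacle is really the uniqueness step: one needs to combine Boucksom's linear-independence result for the negative part of the divisorial Zariski decomposition with the estimate of $\nu$ against multiplicities of effective representatives; the remainder is bookkeeping with extremality.
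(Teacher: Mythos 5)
Your argument is correct and follows the paper's proof in all essentials: both rest on the divisorial Zariski decomposition \eqref{Bdecomposition}, use extremality to force every summand onto the ray $R$, and invoke Boucksom's linear independence of the classes appearing in the negative part (which is \cite[Prop.3.11(iii)]{Bou04} rather than Prop.~3.13, as you cite) to conclude that there is at most one such divisor. The only genuine divergence is the uniqueness of $D_1$, where the paper simply quotes $\kappa(D_1)=0$ from \cite[Prop.3.13]{Bou04}, while you reprove the needed consequence directly via positive homogeneity of the decomposition together with the bound $\nu([E],D)\le \mathrm{mult}_D(E)$ for effective representatives; both are valid, and your version is slightly more self-contained (note also that you do not need, and correctly avoid proving, that $P=0$ in the non-modified-nef case).
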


\begin{proof}
    Let $\alpha = D + P$ be the divisorial Zariski decomposition. Since $L$ generates an extremal ray, we have $P=\lambda \alpha$ and $D=(1-\lambda) \alpha$ for some
    $\lambda \in [0,1]$. By uniqueness of the Zariski decomposition we obtain
    $\lambda=1$ or $\lambda=0$. If $\lambda=1$, then $\alpha=P$ is modified nef.
    If $\lambda=0$, we have
    $$
    \alpha= D = \sum_i \lambda_i D_i
    $$
    where $D_i \subset X$ are the prime divisors appearing in the negative part. Since $\alpha$ generates an extremal ray we have $D_i = \lambda_i \alpha$ with
    $\sum_i \lambda_i =1$. By \cite[Prop.3.11(iii)]{Bou04} the classes of the prime divisors $D_i$ are linearly independent, so we obtain $\alpha=D=D_1$. Since $\kappa(D_1)=0$ by \cite[Prop.3.13]{Bou04} the statement follows.
\end{proof}

\begin{corollary} \label{corollaryextremalclass}
Let $X$ be a projective manifold with Picard number one, and let $E \rightarrow X$ be a vector bundle. Let $\alpha$ be the unique $\R$-divisor class on $X$ such that
$\zeta+\pi^* \alpha$ generates an extremal ray in $N^1(\PP(E))$. Then
$\zeta+\pi^* \alpha$ is modified nef or there exists a unique prime divisor
$D_1 \subset \PP(E)$ such that $\R^+ (\zeta+\pi^* \alpha) =\R^+ D_1$.
\end{corollary}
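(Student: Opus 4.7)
The plan is to deduce this corollary as a direct application of Lemma \ref{lemmaextremalclass} to the projective manifold $\PP(E)$ and the pseudoeffective class $\beta := \zeta+\pi^*\alpha$.

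First I would justify the existence and uniqueness of $\alpha$. Since $X$ has Picard number one with ample generator $H \in N^1(X)_\R$, the bundle $\PP(E)$ has Picard number two and $N^1(\PP(E))_\R = \R\zeta \oplus \R\pi^*H$. A line $\ell$ in a fibre of $\pi$ satisfies $\zeta\cdot\ell=1$ and $\pi^*H\cdot\ell=0$, and such lines sweep out all of $\PP(E)$; thus their class is movable, and every pseudoeffective class has non-negative $\zeta$-coefficient. The pseudoeffective cone of $\PP(E)$ is therefore contained in the closed half-plane $\{a\zeta + b\pi^*H : a \geq 0\}$, and its intersection with the boundary $a=0$ is exactly $\R^+\pi^*H$, since $-\pi^*H$ fails to be pseudoeffective. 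Hence $\R^+\pi^*H$ is one of the two extremal rays, and the other must lie strictly in the open half-plane $a>0$, whence it admits a unique representative of the form $\zeta+\pi^*\alpha$.

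Second, since $\beta$ generates an extremal ray of the pseudoeffective cone of $\PP(E)$, it is in particular a pseudoeffective class on the projective manifold $\PP(E)$. Applying Lemma \ref{lemmaextremalclass} to $\beta$ then yields exactly the claimed dichotomy: either $\beta$ is modified nef, or there exists a unique prime divisor $D_1 \subset \PP(E)$ with $\R^+\beta = \R^+ D_1$.

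The argument presents no serious obstacle: the corollary is essentially a specialisation of the preceding lemma to the two-dimensional pseudoeffective cone of a projectivised bundle over a Picard-rank-one base. The only mildly non-trivial preliminary step is the explicit description of that pseudoeffective cone, which is secured by the movability of lines in fibres of $\pi$.
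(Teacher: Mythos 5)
Your proof is correct and follows exactly the route the paper intends: the corollary is stated without proof precisely because it is the direct application of Lemma \ref{lemmaextremalclass} to the pseudoeffective class $\zeta+\pi^*\alpha$ on the projective manifold $\PP(E)$. Your preliminary description of the two-dimensional pseudoeffective cone of $\PP(E)$ (via the movable class of lines in the fibres of $\pi$) correctly justifies the existence and uniqueness of $\alpha$, which the paper's statement simply takes for granted.
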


\subsection{Dual variety of minimal rational tangents} \label{subsectionVMRT}

Let $X$ be a uniruled projective manifold of dimension $n$. A \emph{family of rational curves} on $X$ is, by definition, an irreducible component $\mathcal{K}$ of $\Rat(X)$. Each of these families comes equipped with a commutative diagram:
\[
\xymatrix{
\mathcal{U}\ar[r]^{e}\ar[d]_{q}  &  X\\
\mathcal{K}                      &
}
\]
where $\mathcal{U}$ is the normalisation of the universal family over $\mathcal{K}$, the morphism $q$ is a smooth $\PP^1$-fibration and $e$ is the evaluation morphism (see \cite[Sect.II.2]{Ko96} for details). Given a point $x\in e(\mathcal{U})$, we denote by $\mathcal{K}_x$ the normalisation of the set $q(e^{-1}(x))$, and by $\mathcal{U}_x$ the normalisation of its fibre product with $\mathcal{U}$ over $\mathcal{K}$. We say that the family $\mathcal{K}$ is
\begin{itemize}
	\item \emph{dominating} if $e$ is dominant,
	
	\item \emph{locally unsplit} if $\mathcal{K}_x$ is proper for general $x\in X$.
\end{itemize}

{\em
For the rest of the section we consider a family of rational curves $\mathcal K$ that is dominating
and locally unsplit.}

Recall that if $x\in X$ is a general point, then $\mathcal{K}_x$ is smooth. There exists a rational map $\tau_x$ from $\mathcal{K}_x$ to $\PP(\Omega_{X,x})$, called the \emph{tangent map} of $\mathcal{K}$ at $x$, sending the general element of $\mathcal{K}_x$ to its tangent direction at $x$. It is known that $\tau_x$ is a finite and birational morphism \cite{HM04} onto a variety $\mathcal{C}_x\subset \PP(\Omega_{X,x})$ usually called the \emph{variety of minimal rational tangents} (VMRT, for short) of $\mathcal{K}$ at $x$.
The set
$$
\mathcal C := \overline{\bigcup_{x \in X \mbox{\tiny general}} \mathcal{C}_x} \subset \PP(\Omega_X)
$$
is called the total VMRT associated to $\mathcal{K}$. Let $[l]\in \mathcal{K}$ be a general member with normalisation $f\colon\PP^1\rightarrow l$. Then $l$ is a standard rational curve; that is, we have
\[
f^*T_X\cong \sO_{\PP^1}(2)\oplus\sO_{\PP^1}(1)^{\oplus p}\oplus\sO_{\PP^1}^{\oplus(n-p-1)}.
\]
A \emph{minimal section} $\widetilde{l}$ of $\PP(T_X)$ over the minimal rational curve $l$ is a rational curve
$$
\tilde f: \PP^1 \rightarrow \PP(T_X\vert_{l})\subset\PP(T_X)
$$
corresponding to a quotient $f^*T_X\twoheadrightarrow \sO_{\PP^1}$. Consider an irreducible component $\widetilde{\mathcal{K}}\subset \Rat(\PP(T_X))$ containing a minimal section $\widetilde{l}$ of $\PP(T_X)$ over $l$ and the corresponding universal family $\tilde U \rightarrow \tilde K$. Then we have a commutative diagram (see \cite[\S\,4]{OCW16} for more details):
\[
\xymatrix{
\widetilde{\mathcal{K}}\ar[d]_{\widetilde{\pi}} & \widetilde{\mathcal{U}}\ar[l]_{\widetilde{q}}\ar[d]\ar[r]^{\widetilde{e}} & \PP(T_X)\ar[d]^\pi\\
\mathcal{K}        &   \mathcal{U}\ar[l]^q\ar[r]^e   &  X
}
\]

\begin{proposition}\cite[Prop. 4]{OCW16}
	\label{Prop:family of minimal sections}
	With the same notation as above, the variety $\widetilde{\mathcal{U}}$ is smooth at $[\widetilde{l}]$, of dimension $2n-3$. Moreover, let  $\widetilde{f}\colon \PP^1\rightarrow \widetilde{l}$ be the normalisation. Then there exists a nonnegative integer $c\leq p$ such that
	\[\widetilde{f}^*T_{\PP(T_X)}\cong \sO_{\PP^1}(-2)\oplus\sO_{\PP^1}(2)\oplus\sO_{\PP^1}(-1)^{\oplus c}\oplus\sO_{\PP^1}(1)^{\oplus c}\oplus\sO_{\PP^1}^{\oplus (2n-2c-3)}.\]
	The number $c$ will be called the defect of $\widetilde{\mathcal{K}}$ at $\widetilde{l}$.
\end{proposition}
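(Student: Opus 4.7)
The plan is to pull back the two canonical short exact sequences attached to $\pi\colon \PP(T_X)\to X$ along $\tilde f\colon \PP^1 \to \PP(T_X)$ and then analyse the splitting type of $\tilde f^{*}T_{\PP(T_X)}$ on $\PP^1$. Since $\tilde l$ is a section of $\pi$ corresponding to the quotient $f^{*}T_X\twoheadrightarrow \sO_{\PP^1}$, we have $\tilde f^{*}\sO_{\PP(T_X)}(1)\cong \sO_{\PP^1}$. Pulling back the relative Euler sequence
$$0\to \sO_{\PP(T_X)}\to \pi^{*}\Omega_X\otimes\sO_{\PP(T_X)}(1)\to T_{\PP(T_X)/X}\to 0,$$
and noting that the inclusion $\sO\hookrightarrow f^{*}\Omega_X$ is dual to the given quotient and hence factors through the trivial summand of $f^{*}\Omega_X=\sO(-2)\oplus \sO(-1)^{\oplus p}\oplus \sO^{\oplus(n-p-1)}$, one computes
$$\tilde f^{*}T_{\PP(T_X)/X} \cong \sO(-2)\oplus \sO(-1)^{\oplus p}\oplus \sO^{\oplus(n-p-2)}.$$
Substituting into the pullback of the relative tangent sequence yields the fundamental exact sequence
$$0\to \sO(-2)\oplus \sO(-1)^{\oplus p}\oplus \sO^{\oplus(n-p-2)} \to \tilde f^{*}T_{\PP(T_X)} \to \sO(2)\oplus \sO(1)^{\oplus p}\oplus \sO^{\oplus(n-p-1)} \to 0. \qquad (\ast)$$

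By Grothendieck's splitting theorem $\tilde f^{*}T_{\PP(T_X)}\cong \bigoplus_{i=1}^{2n-1}\sO(a_i)$, and the total degree is zero because $-K_{\PP(T_X)}=n\zeta$ and $\zeta\cdot\tilde l=0$. A summand of degree $>2$ would map trivially to the quotient of $(\ast)$ and thus lie in the subbundle of maximal degree $0$, which is impossible; dually $a_i\geq -2$. The tangent direction $T_{\tilde l}=\sO(2)$ is a canonical subbundle of $\tilde f^{*}T_{\PP(T_X)}$, and since $\sO(2)$ has no nontrivial maps to any summand of strictly smaller degree, it splits off as a direct summand, giving $\tilde f^{*}T_{\PP(T_X)}\cong \sO(2)\oplus N$. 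Taking cohomology in $(\ast)$ gives $h^1(\tilde f^{*}T_{\PP(T_X)})\leq h^1(\tilde f^{*}T_{\PP(T_X)/X})=1$, so there is at most one $\sO(-2)$ summand; the canonical inclusion $\sO(-2)=\Omega_l\hookrightarrow \tilde f^{*}T_{\PP(T_X)/X}$, combined with an analysis of the minimal-section structure, produces exactly one such summand. Balancing the remaining ranks and degrees (the other summands must lie in $\{-1,0,1\}$ with the number of $+1$'s equal to the number of $-1$'s) forces the claimed splitting for a unique integer $c\geq 0$.

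For the bound $c\leq p$, I twist $(\ast)$ by $\sO_{\PP^1}(-1)$ and pass to the long exact sequence in cohomology. The twisted subbundle $\sO(-3)\oplus\sO(-2)^{\oplus p}\oplus\sO(-1)^{\oplus(n-p-2)}$ has vanishing $H^0$, so there is an injection
$$H^0(\tilde f^{*}T_{\PP(T_X)}(-1))\hookrightarrow H^0(f^{*}T_X(-1)).$$
Reading the dimensions from the two splittings gives $c+2\leq p+2$, so $c\leq p$. Finally, the smoothness of $\widetilde{\mathcal U}$ at $[\tilde l]$ and its dimension $2n-3$ follow from standard deformation theory of rational curves on $\PP(T_X)$, using $h^0(\tilde f^{*}T_{\PP(T_X)})=2n$: the tangent space to parametrised rational curves has dimension $2n$, the $3$-dimensional action of $\mathrm{Aut}(\PP^1)$ is quotiented out, and the single obstruction in $H^1(\tilde f^{*}T_{\PP(T_X)})$ is absorbed by the specific geometry of the family of minimal sections. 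The most delicate step throughout is the identification of exactly one $\sO(-2)$ as a \emph{direct} summand, where the cohomological upper bound $h^1\leq 1$ must be paired with a geometric lower bound coming from the section structure of $\tilde l$.
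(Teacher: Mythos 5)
The paper offers no proof of this statement---it is quoted verbatim from \cite[Prop.~4]{OCW16}---so your attempt can only be measured against the argument there. Everything you do up to and including the sequence $(\ast)$ is correct: $\tilde f^*\sO_{\PP(T_X)}(1)\cong\sO_{\PP^1}$, the computation $\tilde f^*T_{\PP(T_X)/X}\cong\sO(-2)\oplus\sO(-1)^{\oplus p}\oplus\sO^{\oplus(n-p-2)}$, the bounds $-2\le a_i\le 2$ and total degree zero, the existence of an $\sO(2)$ summand from the tangent direction, the bound of at most one $\sO(-2)$ from $h^1\le 1$, and the inequality $c\le p$ obtained by twisting by $\sO(-1)$.

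The gap is at the step you yourself flag as delicate: the existence of the $\sO(-2)$ direct summand. The reason you give---the canonical inclusion $\sO(-2)\hookrightarrow\tilde f^*T_{\PP(T_X)/X}$---does not suffice: a degree $-2$ subbundle need not split off the ambient bundle, and the extension class of $(\ast)$ lives in $\Ext^1(f^*T_X,\tilde f^*T_{\PP(T_X)/X})$, which contains nonzero pieces (e.g.\ $H^1(\sO(-2))$ from the trivial summands of $f^*T_X$ against the $\sO(-2)$), so that for instance $\sO(2)\oplus\sO(-1)^{\oplus 2}\oplus\sO^{\oplus(2n-4)}$ passes every test you have imposed. (You also never exclude a second $\sO(2)$ summand, which is needed for the claimed form; it follows by twisting $(\ast)$ by $\sO(-2)$ and noting $h^0\le 1$, the mirror of your $h^1$ bound.) The missing input is geometric: over each curve of the $(n+p-1)$-dimensional family $\mathcal K$ the minimal sections form a $\PP^{n-p-2}\cong\PP(H^0(f^*\Omega_X))$, so $\dim_{[\tilde l]}\widetilde{\mathcal K}\ge 2n-3$ and hence $h^0(\tilde f^*T_{\PP(T_X)})\ge 2n$. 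Since $a_i\ge -2$ and the total degree is $0$, one has the identity $h^0(\tilde f^*T_{\PP(T_X)})=2n-1+\#\{a_i=-2\}$, which forces exactly one $\sO(-2)$ and, via equality of the dimension with the tangent-space bound, yields the smoothness statement. Your final paragraph runs this logic backwards: $h^0=2n$ is not available until the splitting is known, and the obstruction in $H^1$ is not ``absorbed''---smoothness holds precisely because the explicitly constructed family already attains the upper bound $h^0-3$. So the sheaf-theoretic skeleton is right, but the dimension count for the family of minimal sections, which is the heart of the proof in \cite{OCW16}, is missing.
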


\begin{definition} \label{definitiontotaldual}
	With the same notation as above, denote by $\check{\mathcal{C}}$ the closure of $\widetilde{e}(\widetilde{\mathcal{U}})$ in $\PP(T_X)$.
	The variety $\check{\mathcal{C}}$ is called the total dual VMRT of the family $\mathcal K$.
\end{definition}

The next result justifies the terminology in Definition \ref{definitiontotaldual}:

\begin{proposition}
	\cite[Prop. 5, Cor. 5]{OCW16}
	With the same notation as above, let $x\in X$ be a general point. Then $\check{\mathcal{C}}_x$ is the projectively dual variety of $\mathcal{C}_x$ and the dual defect of $\mathcal{C}_x$ equals to the defect of $\widetilde{\mathcal{K}}$ at $\widetilde{l}$.
\end{proposition}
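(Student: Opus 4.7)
The plan is to describe $\check{\mathcal{C}}_x$ concretely in terms of the splitting type of $f^*T_X$ along a general minimal rational curve, and then verify the defect equality via a deformation-theoretic dimension count. Fix a general $x\in X$ and let $\holom{f}{\PP^1}{l}$ be the normalisation of a general member of $\mathcal{K}_x$ with $f(t)=x$, so that $[v]:=[df_t(T_{\PP^1,t})]$ is a smooth point of $\mathcal{C}_x$. By the standard splitting
\[
f^*T_X\cong\sO_{\PP^1}(2)\oplus\sO_{\PP^1}(1)^{\oplus p}\oplus\sO_{\PP^1}^{\oplus(n-p-1)},
\]
a minimal section $\widetilde{l}\subset\PP(T_X\vert_l)$ over $l$ corresponds to a surjection $f^*T_X\twoheadrightarrow\sO_{\PP^1}$; since any morphism from a positive line bundle to $\sO_{\PP^1}$ vanishes, this surjection factors through the trivial summand $\sO_{\PP^1}^{\oplus(n-p-1)}$. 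Its kernel at $t$ is a hyperplane $H\subset T_{X,x}$ containing the image at $t$ of $\sO_{\PP^1}(2)\oplus\sO_{\PP^1}(1)^{\oplus p}$, and $\widetilde{e}(\widetilde{t})=[H]\in\PP(T_X)$.

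The key input is the Hwang--Mok identification (see \cite{HM04}): the image at $t$ of $\sO_{\PP^1}(2)\oplus\sO_{\PP^1}(1)^{\oplus p}\subset f^*T_X$, viewed as a subspace of $T_{X,x}$, coincides with the affine embedded tangent space of $\mathcal{C}_x$ at $[v]$. Combined with the previous paragraph, this shows that every $[H]\in\check{\mathcal{C}}_x$ is a tangent hyperplane to $\mathcal{C}_x$ at some smooth point $[v]$. Conversely, any hyperplane $H\subset T_{X,x}$ containing the embedded tangent space at such a $[v]$ induces a quotient of $T_{X,x}$ killing the image of the positive summands, and can therefore be extended via the trivial direct summand to a surjection $f^*T_X\twoheadrightarrow\sO_{\PP^1}$ producing a minimal section over $[H]$. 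Hence $\check{\mathcal{C}}_x$ is precisely the projective dual variety of $\mathcal{C}_x$ for general $x$.

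For the defect equality, the strategy is to compute the generic fibre dimension of $\widetilde{e}$ in two different ways. By the previous description, a generic $[H]\in\check{\mathcal{C}}_x$ has fibre $\widetilde{e}^{-1}([H])$ equal to the contact locus of the tangent hyperplane $H$ with $\mathcal{C}_x$, which by standard projective duality has dimension equal to the dual defect $\delta(\mathcal{C}_x)$. On the other hand, infinitesimal deformations of $\widetilde{l}$ keeping the marked point fixed (modulo reparametrisations of $\PP^1$) are governed by $h^0(\widetilde{f}^*T_{\PP(T_X)}(-\widetilde{t}))$, and evaluating this using the splitting
\[
\widetilde{f}^*T_{\PP(T_X)}\cong\sO_{\PP^1}(-2)\oplus\sO_{\PP^1}(2)\oplus\sO_{\PP^1}(-1)^{\oplus c}\oplus\sO_{\PP^1}(1)^{\oplus c}\oplus\sO_{\PP^1}^{\oplus(2n-2c-3)}
\]
yields a $c$-parameter family of such deformations fixing $\widetilde{e}(\widetilde{t})$. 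Hence $\dim\widetilde{e}^{-1}(y)=c$ generically, and comparing the two counts gives $\delta(\mathcal{C}_x)=c$.

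The main obstacle is the Hwang--Mok identification invoked above: extracting the affine embedded tangent space of the VMRT from the positive summands of $f^*T_X$ is a non-trivial geometric input from the theory of minimal rational curves, and relies on a careful analysis of how deformations of $l$ fixing $x$ propagate tangent directions at $x$. The subsequent dimension count for $\widetilde{e}$ also needs care in order to correctly isolate, among the infinitesimal deformations of the pair $(\widetilde{l},\widetilde{t})$, those that actually move $\widetilde{e}(\widetilde{t})$ versus those that act only by reparametrising $\widetilde{l}$.
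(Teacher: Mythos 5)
The paper does not actually prove this proposition: it is quoted directly from \cite[Prop.~5, Cor.~5]{OCW16}, so there is no in-paper argument to compare against. Your reconstruction is correct and follows the same route as the cited source: the Hwang--Mok identification of the fibre at the marked point of the positive part $\sO_{\PP^1}(2)\oplus\sO_{\PP^1}(1)^{\oplus p}\subset f^*T_X$ with the affine embedded tangent space $\widehat{T}_{[v]}\mathcal{C}_x$ shows that the minimal sections over $l$ hit exactly the tangent hyperplanes of $\mathcal{C}_x$ at $\tau_x([l])$, and the defect equality follows from matching the contact-locus dimension from reflexivity against the deformation count $h^0\bigl(\widetilde{f}^*T_{\PP(T_X)}(-\widetilde{t})\bigr)-2=c$ read off from Proposition \ref{Prop:family of minimal sections}.
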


\begin{remark*}
Let us briefly recall the definition of a projectively dual variety. Let $V$ be a vector space of dimension $N+1$, and let $X\subset\PP(V)$ be a projective variety. We denote by $T_{X,x}$ the tangent space at any smooth point $x\in X$. We can also define the \emph{embedded projective tangent space} $\widehat{T}_{X,x}\subset \PP(V)$ as follows:
\[
\widehat{T}_{X,x}=\PP(T_{v,\textrm{Cone}(X)}),
\]
where $\textrm{Cone}(X)\subset V$ is the affine cone over $X$, $v$ is any non-zero point on the line $\C x$, and we consider $T_{v,\textrm{Cone}(X)}$ as a linear space (it does not depend on the choice of $v$). A hyperplane $H\subset \PP(V)$ is a \emph{tangent hyperplane} of $X$ if $\widehat{T}_{X,x}\subset H$ for some smooth point $x\in X$. The closure $\check{X}\subset\PP(V^*)$ of the set of all tangent hyperplanes is called the \emph{projectively dual variety} of $X$. The \emph{dual defect} of $X$ is defined as
\[
\codim_{\PP^(V^*)}\check{X}-1.
\]
\end{remark*}

One of the main tools in this paper is to compute the divisor class of a total dual VMRT $\check{\mathcal{C}}$. This requires to go beyond the definition of $\check{\mathcal{C}}$ as a closure and consider the completion of the family of rational curves:

Let $\overline{\mathcal{K}}$ be the normalisation of the closure of $\mathcal{K}$ in $\Chow{X}$, and denote by $\overline{q}\colon \overline{\mathcal{U}}\rightarrow \overline{\mathcal{K}}$ the normalisation of the universal family and by $\overline{e}\colon \overline{\mathcal{U}}\rightarrow X$
the natural map extending the evaluation morphism.
Since a general member of $\mathcal K$ is a standard rational curve, it is immersed. Thus the set $\mathcal K_0 \subset \overline{\mathcal{K}}$ parametrising immersed rational curves is not empty and the morphism
\[
\overline{e}^*\Omega_X\rightarrow \Omega_{\overline{\mathcal{U}}}\rightarrow \Omega_{\overline{\mathcal{U}}/\overline{\mathcal{K}}}
\]
is generically surjective and even surjective on $\fibre{\overline{q}}{\mathcal K_0}$.
Denote by $\sT_{\overline{\mathcal{U}}/\overline{\mathcal{K}}}$ the dual
$\Omega_{\overline{\mathcal{U}}/\overline{\mathcal{K}}}^* \rightarrow T_{\overline{\mathcal{U}}}$. Then, dualising the morphisms above,
 one gets an exact sequence of sheaves
\begin{equation}
\label{Equa:sequnece of tangents of universal family}
	0\rightarrow \sT_{\overline{\mathcal{U}}/\overline{\mathcal{K}}}\rightarrow \overline{e}^*T_X\rightarrow \sG\rightarrow 0,
\end{equation}
where $\sG$ denotes the cokernel. For clarity's sake let us note that
$\sT_{\overline{\mathcal{U}}/\overline{\mathcal{K}}}$ is in general neither saturated in $\overline{e}^*T_X$, nor in  $T_{\overline{\mathcal{U}}}$.

Let $P$ be the unique component of $\PP(\sG)\subset \PP(\overline{e}^*T_X)$ dominating $\mathcal{\overline{U}}$. Denote by $\overline{\pi}$ the natural projection $\PP(\overline{e}^*T_X)\rightarrow \overline{\mathcal{U}}$, then we have a Cartesian diagram:
	\[
	\xymatrix{
		\PP(\overline{e}^*T_X)\ar[d]_{\overline{\pi}}\ar[r]^{\widetilde{e}}   &  \PP(T_X)\ar[d]^{\pi}\\
		\overline{\mathcal{U}}\ar[r]_{\overline{e}} & X
    }
	\]

We will now focus on a special case:

\begin{proposition} \label{propositionclassVMRT}
	With notation as above, assume furthermore that
	\begin{itemize}
	    \item $p=0$ ;
	    \item $\sG$ is locally free in codimension one.
	\end{itemize}
Then we have $\widetilde{e}_*(P)=\check{\mathcal{C}}$ and
	\[
	[\check{\mathcal{C}}]=\deg(\overline{e}) \zeta - \pi^*\overline{e}_* c_1(\sT_{\overline{\mathcal{U}}/\overline{\mathcal{K}}}).
	\]
\end{proposition}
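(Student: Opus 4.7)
My plan is to show that $\widetilde e|_P\colon P\to \check{\mathcal C}$ is birational onto its image, so that $\widetilde e_*[P]=[\check{\mathcal C}]$, to compute the class $[P]$ in $\PP(\bar e^*T_X)$ via the exact sequence $0\to \sT_{\overline{\mathcal U}/\overline{\mathcal K}}\to \bar e^*T_X\to \sG\to 0$, and then to push forward under $\widetilde e$.

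The identification $\widetilde e(P)=\check{\mathcal C}$ proceeds fibrewise. Over a general point $x\in X$ the fibre $\bar e^{-1}(x)$ consists of $\deg(\bar e)$ reduced points $u_1,\ldots,u_k$, each marking a minimal rational curve $\ell_i$ through $x$; at $u_i$ the short exact sequence above restricts to $0\to T_{\ell_i,x}\to T_{X,x}\to T_{X,x}/T_{\ell_i,x}\to 0$, so the fibre of $P$ over $u_i$ is the hyperplane $\PP(T_{X,x}/T_{\ell_i,x})\subset \PP(T_{X,x})$, i.e.\ the set of hyperplanes of $T_{X,x}$ containing the tangent direction $[T_{\ell_i,x}]\in\mathcal C_x$. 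Since $p=0$ the VMRT $\mathcal C_x$ is zero-dimensional, and by the birationality of the tangent map $\tau_x$ the points $[T_{\ell_i,x}]$ are pairwise distinct for general $x$. The union of the $k$ hyperplanes $\PP(T_{X,x}/T_{\ell_i,x})$ is then exactly the projectively dual variety of the reduced $\mathcal C_x$, i.e.\ $\check{\mathcal C}_x$, and the hyperplanes being pairwise distinct forces $\widetilde e|_P$ to be generically one-to-one. This gives $\widetilde e(P)=\check{\mathcal C}$ and $\widetilde e_*[P]=[\check{\mathcal C}]$.

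To compute $[P]$ I use the assumption that $\sG$ is locally free in codimension one: outside a closed subset $Z\subset \overline{\mathcal U}$ of codimension at least two, $\sG$ is a vector bundle of rank $n-1$ and $\sT_{\overline{\mathcal U}/\overline{\mathcal K}}$ is a saturated line subbundle of $\bar e^*T_X$. Over $\overline{\mathcal U}\setminus Z$ the divisor $\PP(\sG)\hookrightarrow \PP(\bar e^*T_X)$ is cut out as the zero locus of the composition
\[
\overline\pi^*\sT_{\overline{\mathcal U}/\overline{\mathcal K}}\hookrightarrow \overline\pi^*\bar e^*T_X\twoheadrightarrow \sO_{\PP(\bar e^*T_X)}(1),
\]
which is a section of $\sO_{\PP(\bar e^*T_X)}(1)\otimes \overline\pi^*\sT_{\overline{\mathcal U}/\overline{\mathcal K}}^*$. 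Hence
\[
[P]=\zeta_{\bar e^*T_X}-\overline\pi^* c_1(\sT_{\overline{\mathcal U}/\overline{\mathcal K}})
\]
on the complement of $\overline\pi^{-1}(Z)$, and since this complement has codimension at least two in $\PP(\bar e^*T_X)$ the same equality persists as divisor classes on all of $\PP(\bar e^*T_X)$.

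Finally, since the defining square is Cartesian one has $\widetilde e^*\zeta=\zeta_{\bar e^*T_X}$, $\deg\widetilde e=\deg\bar e$, and $\widetilde e_*\overline\pi^*=\pi^*\bar e_*$ by flat base change, which together give
\[
[\check{\mathcal C}]=\widetilde e_*[P]=\deg(\bar e)\,\zeta-\pi^*\bar e_* c_1(\sT_{\overline{\mathcal U}/\overline{\mathcal K}}).
\]
The main delicacy is the first step: the hypothesis $p=0$ is what guarantees that $\check{\mathcal C}_x$ coincides set-theoretically with the union of the hyperplanes $\PP(T_{X,x}/T_{\ell_i,x})$ (for $p>0$ the projective dual of $\mathcal C_x$ would be cut out by higher-order tangency conditions and could carry a dual defect), and the birationality of $\tau_x$ is what prevents multiplicities from appearing in $\widetilde e_*[P]$.
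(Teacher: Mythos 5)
Your proposal is correct and follows essentially the same route as the paper: identify the fibres of $P$ over a general point with the hyperplanes dual to the finitely many (distinct, by birationality of $\tau_x$) tangent directions in $\mathcal{C}_x$ to get $\widetilde{e}_*(P)=\check{\mathcal{C}}$, compute $[P]=\zeta_{\overline{e}^*T_X}-\overline{\pi}^*c_1(\sT_{\overline{\mathcal{U}}/\overline{\mathcal{K}}})$ from the exact sequence on the locus where $\sG$ is locally free, and push forward using the Cartesian square. Your explicit realisation of $P$ as the zero locus of a section of $\sO_{\PP(\overline{e}^*T_X)}(1)\otimes\overline{\pi}^*\sT_{\overline{\mathcal{U}}/\overline{\mathcal{K}}}^*$ is just a more detailed phrasing of the paper's class computation.
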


\begin{remark*}
If $p=0$, the morphism $\overline{e}$ is generically finite, so its degree $\deg(\overline{e})$ is
well-defined. Since $\overline{U}$ is only assumed to be normal, the first Chern class $c_1(\sT_{\overline{\mathcal{U}}/\overline{\mathcal{K}}})$ is only the Weil divisor class
associated to the reflexive sheaf $\sT_{\overline{\mathcal{U}}/\overline{\mathcal{K}}}$.
Since push-forwards and flat pullbacks are well-defined for Weil divisor classes, this does not pose any difficulty.
\end{remark*}

\begin{proof}
	As $p=0$, the dual defect $c$ vanishes by Proposition \ref{Prop:family of minimal sections} and $\check{\mathcal{C}}$ is a divisor.
	Moreover the evaluation map is \'etale in a neighbourhood of a standard rational curve $l \in \mathcal K$, so the cokernel $\sG|_l$ coincides with the quotient defined by
	the injection $T_l \rightarrow f^* T_X$. In particular $\PP(\sG|_l)$ is the projective dual
	corresponding to the point $\PP(\Omega_l) \subset \PP(\Omega_X|_l)$.
	This already shows that $\widetilde{e}(P)\subset\check{\mathcal{C}}$. Since
	$\check{\mathcal{C}}$ is irreducible we have a set-theoretical equality.
	
	Moreover, since the tangent map $\tau_x$ is birational \cite{HM04}, and $\mathcal C_x$ is finite, it is injective. Thus its projective dual
	consists of $\deg(\overline{e})$ hyperplanes and the map $P \rightarrow \check{\mathcal{C}}$
	is birational. Thus we have $\widetilde{e}_*(P)=\check{\mathcal{C}}$.

Since $\sG$ is locally free in codimension one, the variety $P$ is the only divisorial component of $\PP(\sG)\subset\PP(\overline{e}^*T_X)$. Moreover, by restricting the sequence \eqref{Equa:sequnece of tangents of universal family} to the locally free locus of $\sG$, we obtain
	\[
	[P] = c_1(\sO_{\PP(\overline{e}^*T_X)}(1)) - \overline{\pi}^*c_1(\sT_{\overline{\mathcal{U}}/\overline{\mathcal{K}}}).
	\]
	Since $\widetilde{e}^* \zeta = c_1(\sO_{\PP(\overline{e}^*T_X)}(1))$, this yields
	\begin{align*}
		[\check{\mathcal{C}}]=[\widetilde{e}_*P] & = \widetilde{e}_*c_1(\sO_{\PP(\overline{e}^*T_X)}(1)) - \widetilde{e}_*\overline{\pi}^*c_1(\sT_{\overline{\mathcal{U}}/\overline{\mathcal{K}}})\\
		& =\deg(\widetilde{e}) \zeta - \pi^*\overline{e}_* c_1(\sT_{\overline{\mathcal{U}}/\overline{\mathcal{K}}}).
	\end{align*}
	Then we conclude by the fact that $\deg(\widetilde{e})=\deg(\overline{e})$.
\end{proof}

As an immediate application, one can easily derive the following criterion for bigness of tangent bundle of Fano manifolds with Picard number one.

\begin{corollary} \label{corollarytangentnotbig}
	In the situation of Proposition \ref{propositionclassVMRT},
	assume that $\pic(X) \cong \Z H$ with $H$ ample. If
	$- \pi^*\overline{e}_* c_1(\sT_{\overline{\mathcal{U}}/\overline{\mathcal{K}}})=mH$
	for some $m \geq 0$, then $T_X$ is not big.
\end{corollary}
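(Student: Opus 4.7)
My plan is to argue by contradiction. Assuming $T_X$ is big, I would produce a curve inside $\check{\mathcal{C}}$ on which a witness for bigness has strictly negative intersection, and then force $\check{\mathcal{C}}$ to appear in the divisorial Zariski decomposition of this witness in a way that contradicts the maximality of its coefficient. The key geometric input is that $\check{\mathcal{C}}$ is, by construction, swept out by the family $\widetilde{\mathcal{K}}$ of minimal sections.

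First, by Lemma \ref{lemmasubstractbig} combined with $\pic(X) \cong \Z H$, if $T_X$ were big then there would exist $\epsilon > 0$ such that $\zeta - \epsilon \pi^* H$ is pseudoeffective on $\PP(T_X)$. To produce the test curve I would pick, for a general minimal rational curve $l \in \mathcal{K}$ with normalisation $f \colon \PP^1 \to l$, a general minimal section $\widetilde{l}$ corresponding to a quotient $f^* T_X \twoheadrightarrow \sO_{\PP^1}$. Then $\sO_{\PP(T_X)}(1)|_{\widetilde{l}} \cong \sO_{\PP^1}$ and $\pi|_{\widetilde{l}} \colon \widetilde{l} \to l$ is an isomorphism, so
\[
\zeta \cdot \widetilde{l} = 0, \qquad \pi^* H \cdot \widetilde{l} = H \cdot l > 0.
\]
Since $\check{\mathcal{C}}$ is the closure of $\widetilde{e}(\widetilde{\mathcal{U}})$, the curves $\widetilde{l}$ sweep out $\check{\mathcal{C}}$, so $\widetilde{l}$ is a moving curve in $\check{\mathcal{C}}$ (though not in $\PP(T_X)$).

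The central step is to extract the maximal multiple of $\check{\mathcal{C}}$ that can be subtracted while remaining pseudoeffective. Let $\lambda \geq 0$ be the largest real number such that
\[
R := (\zeta - \epsilon \pi^* H) - \lambda [\check{\mathcal{C}}]
\]
is pseudoeffective. Substituting $[\check{\mathcal{C}}] = \deg(\overline{e})\zeta + m \pi^* H$ from Proposition \ref{propositionclassVMRT} and using that the $\zeta$-coefficient of any pseudoeffective divisor on $\PP(T_X)$ must be nonnegative (test against a line in a fibre of $\pi$), one gets $\lambda \leq 1/\deg(\overline{e})$, so the supremum is attained. Crucially, the hypothesis $m \geq 0$ preserves the sign of the intersection with $\widetilde{l}$ through the subtraction:
\[
R \cdot \widetilde{l} = -(\epsilon + \lambda m)(H \cdot l) < 0.
\]

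The contradiction then comes from the divisorial Zariski decomposition \eqref{Bdecomposition} $R = \sum_j \mu_j D_j + P$. Since $P|_{\check{\mathcal{C}}}$ is pseudoeffective by modified nefness \cite[Prop.2.4]{Bou04}, and $\widetilde{l}$ is a moving curve in $\check{\mathcal{C}}$, one has $P \cdot \widetilde{l} \geq 0$; the strict negativity $R \cdot \widetilde{l} < 0$ therefore forces some prime component $D_{j_0}$ with $\widetilde{l} \subset D_{j_0}$. The moving property of $\widetilde{l}$ inside $\check{\mathcal{C}}$ upgrades this to $\check{\mathcal{C}} \subseteq D_{j_0}$, and primality yields $D_{j_0} = \check{\mathcal{C}}$; but then $R - \mu_{j_0}[\check{\mathcal{C}}]$ is still pseudoeffective, contradicting the maximality of $\lambda$. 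The main obstacle is executing this last step cleanly: it rests on $\widetilde{l}$ being a genuine moving curve inside $\check{\mathcal{C}}$ (built into the definition via $\widetilde{\mathcal{U}}$), on the restriction behaviour of modified nef classes, and on the sign hypothesis $m \geq 0$ that keeps $R \cdot \widetilde{l}$ negative after subtracting $\check{\mathcal{C}}$.
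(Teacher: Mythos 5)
Your argument is correct, and its geometric core coincides with the paper's: in both cases the decisive input is that a general minimal section $\widetilde{l}$ satisfies $\zeta\cdot\widetilde{l}=0$ and $\pi^*H\cdot\widetilde{l}>0$, that these sections sweep out $\check{\mathcal{C}}$ because $p=0$, and that the hypothesis $m\geq 0$ makes the class $[\check{\mathcal{C}}]=\deg(\overline{e})\,\zeta+m\,\pi^*H$ incompatible with any negative $\pi^*H$-coefficient. Where you genuinely diverge is in how the contradiction is extracted. The paper argues that bigness of $\zeta$ yields a \emph{prime} divisor $D$ with $[D]=\lambda(\zeta-\varepsilon\pi^*H)$; since $D\cdot\widetilde{l}<0$ the minimal sections lie in $D$, hence $D=\check{\mathcal{C}}$ by irreducibility, and comparing coefficients in $N^1(\PP(T_X))=\R\zeta\oplus\R\pi^*H$ gives $-\lambda\varepsilon=m\geq 0$, which is absurd. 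You instead run a maximal-extraction argument through the divisorial Zariski decomposition: subtract the largest pseudoeffective multiple $\lambda[\check{\mathcal{C}}]$ from $\zeta-\epsilon\pi^*H$ (bounded because the $\zeta$-coefficient must stay nonnegative against a line in a $\pi$-fibre, so the supremum is attained by closedness of the pseudoeffective cone), observe that $m\geq 0$ keeps the intersection with $\widetilde{l}$ strictly negative, and use modified nefness of the positive part together with the mobility of $\widetilde{l}$ inside $\check{\mathcal{C}}$ to force $\check{\mathcal{C}}$ to reappear as a component of the negative part, contradicting maximality of $\lambda$. Your route is longer but arguably more robust: it sidesteps the paper's somewhat delicate claim that one can choose an \emph{irreducible} effective representative of $\lambda(\zeta-\varepsilon\pi^*H)$, replacing it with the Zariski-decomposition mechanics that the paper itself deploys elsewhere (e.g.\ in Step~2 of the cubic surface case). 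Both proofs ultimately rest on exactly the same two facts about $\check{\mathcal{C}}$ and nothing more.
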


\begin{proof}
	If $T_X$ is big, the class $\zeta$ lies in the interior of the pseudoeffective cone of $\PP(T_X)$.
	Since this cone is generated by the classes of prime divisors
	there exists a $\varepsilon>0$ and $\lambda \in \N$ such that
	the linear system $| \lambda(\zeta-\varepsilon \pi^* H) |$ contains a prime divisor $D$. Let $\tilde l \subset \PP(T_X)$ be a minimal section,	then $D \cdot \tilde l<0$. Thus the minimal sections are contained in $D$. Since $p=0$, the total dual VMRT $\check{\mathcal{C}}$ is dominated by minimal sections. Thus one obtains $D=\check{\mathcal{C}}$. Yet, by Proposition \ref{propositionclassVMRT} this contradicts $m \geq 0$.
\end{proof}

While the first condition in Proposition \ref{propositionclassVMRT} is rather straightforward, the second condition is more technical. It can be verified easily in two cases:

\begin{remark} \label{remarkVMRTunsplit}
In the situation of Proposition \ref{propositionclassVMRT}, suppose that
 $\mathcal{K}$ is an unsplit dominating family of minimal rational curves. Then we have $\overline{\mathcal{K}}=\mathcal{K}$ and $\Omega_{\overline{\mathcal{U}}/\overline{\mathcal{K}}}=\Omega_{\mathcal{U}/\mathcal{K}}$ is locally free. In particular, $\sT_{\overline{\mathcal{U}}/\overline{\mathcal{K}}}$ is locally free and the locus where $\sG$ is not locally free corresponds to the points
 where the tangent map of
 $$
 e|_{U_{[l]}} : T_{l} \rightarrow (e^* T_X)|_{l}
 $$
 is not injective. By generic smoothness this locus has codimension at least one in $l$.
 Since a general minimal rational curve is immersed, it is empty for $[l] \in \mathcal{K}$ general.
 Thus the locus where $\sG$ is not locally free has codimension at least two in $\sU$.
\end{remark}

\begin{corollary} \label{corollaryconicbundle}
	Let $X$ be a projective manifold admitting a conic bundle structure $f\colon X\rightarrow Y$ over a projective manifold $Y$. Denote by $\check{\mathcal{C}}$ the total dual VMRT associated to the fibres of $f$. Then we have
	\[
	[\check{\mathcal{C}}]\sim\zeta+ \pi^* K_{X/Y}.
	\]
\end{corollary}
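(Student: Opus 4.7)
The plan is to apply Proposition \ref{propositionclassVMRT} to the family $\mathcal{K}$ of fibres of $f$, for which $\overline{\mathcal{K}}=Y$, $\overline{\mathcal{U}}=X$, $\overline{q}=f$ and $\overline{e}=\id_X$; in particular $\deg(\overline{e})=1$. Under these identifications $\sT_{\overline{\mathcal{U}}/\overline{\mathcal{K}}}$ becomes $\sT_{X/Y}:=\Homsheaf(\Omega_{X/Y},\sO_X)$, which is a reflexive rank-one sheaf on the smooth variety $X$, hence a line bundle.

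Next I would verify the two hypotheses of Proposition \ref{propositionclassVMRT}. For the first: a general fibre $l\cong\PP^1$ is contracted by $f$, so the normal bundle $N_{l/X}\cong f^*T_Y|_l$ is trivial and
\[
T_X|_l\cong \sO_{\PP^1}(2)\oplus\sO_{\PP^1}^{\oplus(n-1)},
\]
giving $p=0$. For the second: in the sequence $0\to\sT_{X/Y}\to T_X\to\sG\to 0$, the cokernel $\sG$ fails to be locally free exactly at the points where the inclusion $\sT_{X/Y}\hookrightarrow T_X$ is not a sub-bundle, equivalently at the points where the differential $df\colon T_X\to f^*T_Y$ drops rank. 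Since $X$ is smooth, a conic bundle $f$ has only nodal singular fibres, whose nodes form a codimension-two subset of $X$. This local analysis of the singular fibres is the main technical input; for non-standard conic bundles containing a divisor of non-reduced fibres, this step would require additional care.

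Finally I would identify $c_1(\sT_{X/Y})=-K_{X/Y}$. Over the smooth locus $X^\circ$ of $f$, dualising the relative cotangent sequence gives $\sT_{X/Y}|_{X^\circ}\cong\omega_{X/Y}^{-1}|_{X^\circ}$, and since both $\sT_{X/Y}$ and $\sO_X(-K_{X/Y})$ are line bundles on the smooth variety $X$, agreeing on the open $X^\circ$ whose complement has codimension at least two, the isomorphism extends globally. Substituting into Proposition \ref{propositionclassVMRT} yields
\[
[\check{\mathcal{C}}]=\deg(\overline{e})\,\zeta-\pi^*\overline{e}_* c_1(\sT_{X/Y})=\zeta+\pi^* K_{X/Y},
\]
as claimed.
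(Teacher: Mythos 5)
Your proposal follows the paper's proof essentially verbatim: both apply Proposition \ref{propositionclassVMRT} in the special case $\overline{\mathcal{U}}=X$, $\overline{\mathcal{K}}=Y$, $\deg(\overline{e})=1$, and reduce to the identification $-c_1(\sT_{X/Y})=K_{X/Y}$. The one weak point is your justification of the hypothesis that $\sG$ is locally free in codimension one. It is not true that smoothness of $X$ forces every singular fibre of a conic bundle to be nodal: double-line fibres can occur over a smooth total space. What is actually needed --- both for the local freeness of $\sG$ in codimension one and for your codimension-two extension argument identifying $\sT_{X/Y}$ with $\sO_X(-K_{X/Y})$ (which tacitly assumes $X\setminus X^{\circ}$ has codimension at least two, i.e.\ that $f$ is not everywhere non-submersive along a divisor) --- is that the fibres of $f$ are reduced in codimension one. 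This is exactly \cite[Prop.1.8.5]{Sar82}, which is the reference the paper invokes at this step; you flag the issue for conic bundles with a divisor of non-reduced fibres but leave it open, whereas the citation rules that case out for any conic bundle with smooth total space. Everything else in your argument matches the paper's.
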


\begin{proof}
The fibres of $f$ define a dominating, locally unsplit family with $p=0$, moreover we are
in the special case where $\overline{\mathcal{U}}=X$ and $\overline{\mathcal{K}}=Y$.
Yet a conic bundle has reduced fibres in codimension one \cite[Prop.1.8.5]{Sar82}, so
$\Omega_{X/Y}^*$ is a subbundle of $T_X$ in codimension one and we have 
$$
-c_1(\sT_{\overline{\mathcal{U}}/\overline{\mathcal{K}}})=
-c_1(T_{X/Y})
=
K_{X/Y}.
$$
Moreover, the quotient
$\sG$ is locally free in codimension one. Thus, we conclude by applying Proposition \ref{propositionclassVMRT}.
\end{proof}

\section{del Pezzo surfaces}

Let $X$ be a smooth del Pezzo surface of degree $d \geq 3$. Then the anticanonical divisor $-K_X$
is very ample and defines an embedding
$$
X \hookrightarrow \PP(H^0(X, -K_X)).
$$
Since $-K_X$ is the restriction of the hyperplane class, a line (resp. conic) on $X$
is simply a smooth rational curve $C$ such that $-K_X \cdot C=1$ (resp. $-K_X \cdot C=2$).
By adjunction a line on $X$ is a (-1)-curve, while a conic has $C^2=0$.
In particular it defines a basepoint-free pencil, so every conic on $X$ induces a conic bundle
structure $f: X \rightarrow \PP^1$.
In this section we will use classical geometry to describe the total dual VMRT
associated to these conic bundles. Theorem \ref{THM:bigness-pseudo-effectivity-del-Pezzo-surfaces} will be an immediate consequence
of these computations.

The following result is well known for experts, we include a complete proof for the reader's convenience.

\begin{lemma}
	\label{lemma:freeness-twist-tangent-bundle}
	Let $n$ be a positive integer. Then the vector bundle $(\wedge^{r}\Omega_{\mathbb{P}^n})(r+1)$ is globally generated for $0\leq r\leq n$. In particular, if $X\subset \mathbb{P}^{n}$ be a submanifold of dimension $m\geq 1$, then $T_X(m)\otimes \sO_X(K_X)$ is globally generated.
\end{lemma}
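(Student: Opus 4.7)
The plan is to reduce the first assertion to the Euler sequence of $\PP^n$ and then derive the ``in particular'' part from the conormal sequence.

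For the global generation of $\wedge^{r}\Omega_{\PP^n}(r+1)$, I would first invoke the Serre duality identification $\wedge^{r}\Omega_{\PP^n} \cong \wedge^{n-r}T_{\PP^n}\otimes \omega_{\PP^n} = \wedge^{n-r}T_{\PP^n}(-n-1)$, so that the claim becomes
\[
\wedge^{r}\Omega_{\PP^n}(r+1) \cong \wedge^{n-r}T_{\PP^n}(-(n-r)).
\]
Setting $s=n-r\in\{0,\ldots,n\}$, it therefore suffices to show that $\wedge^{s}T_{\PP^n}(-s)$ is globally generated. Starting from the dual Euler sequence $0\to \sO_{\PP^n}\to \sO_{\PP^n}(1)^{\oplus(n+1)}\to T_{\PP^n}\to 0$ and using the standard filtration on its $s$-th exterior power (which has only two nonzero graded pieces because the subbundle has rank one), one extracts the tautological sequence
\[
0 \to \wedge^{s-1}T_{\PP^n} \to \sO_{\PP^n}(s)^{\oplus \binom{n+1}{s}} \to \wedge^{s}T_{\PP^n} \to 0.
\]
Twisting by $\sO_{\PP^n}(-s)$ displays $\wedge^{s}T_{\PP^n}(-s)$ as a quotient of a trivial bundle, hence globally generated.

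For the ``in particular'' part, since $\Omega_X$ has rank $m$ and $\det \Omega_X = \sO_X(K_X)$, one has the canonical identification $T_X \cong \wedge^{m-1}\Omega_X \otimes \sO_X(-K_X)$, which we rewrite as
\[
T_X(m) \otimes \sO_X(K_X) \cong \wedge^{m-1}\Omega_X(m).
\]
The conormal sequence $0 \to \sN^{*}_{X/\PP^n} \to \Omega_{\PP^n}|_X \to \Omega_X \to 0$ yields a surjection $\Omega_{\PP^n}|_X \twoheadrightarrow \Omega_X$, and taking $(m-1)$-th exterior powers gives a surjection $\wedge^{m-1}\Omega_{\PP^n}|_X \twoheadrightarrow \wedge^{m-1}\Omega_X$. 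Twisting by $\sO_X(m)$, the left-hand side is the restriction to $X$ of $\wedge^{m-1}\Omega_{\PP^n}(m)$, which is globally generated by the first part (applied with $r=m-1$, in the allowed range since $1\leq m\leq n$). Global generation is preserved under restriction and under surjections, so the conclusion follows.

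The argument has no genuine obstacle: the Serre duality identification and the tautological exterior-power sequence derived from the Euler sequence are both standard, as is the use of the conormal sequence. The only care required is in bookkeeping the twists consistently between the two parts.
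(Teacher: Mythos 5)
Your proof is correct, and the first half takes a genuinely different route from the paper. For the global generation of $\wedge^{r}\Omega_{\PP^n}(r+1)$, the paper restricts to an arbitrary line $l\subset\PP^n$, resolves the restriction by a twisted Koszul complex, and uses Bott vanishing $H^{j}(\PP^n,\Omega^{r}_{\PP^n}(r+j+1))=0$ to show that global sections surject onto $H^0(l,\Omega^{r}_{\PP^n}(r+1)\vert_l)$; global generation on each line then gives global generation everywhere. You instead dualise, via the pairing $\wedge^{r}\Omega_{\PP^n}\cong\wedge^{n-r}T_{\PP^n}\otimes\omega_{\PP^n}$ (a property of exterior powers of any vector bundle rather than Serre duality proper, but the isomorphism is the right one), reduce to $\wedge^{s}T_{\PP^n}(-s)$ with $s=n-r$, and exhibit it directly as a quotient of a trivial bundle using the two-step filtration on $\wedge^{s}$ of the Euler sequence --- valid because the sub-bundle $\sO_{\PP^n}$ has rank one, so the sequence
\[
0 \to \wedge^{s-1}T_{\PP^n} \to \sO_{\PP^n}(s)^{\oplus \binom{n+1}{s}} \to \wedge^{s}T_{\PP^n} \to 0
\]
is exact. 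Your argument is more elementary (no cohomology vanishing is needed) and arguably cleaner; the paper's method has the advantage of applying verbatim in situations where one only controls cohomology rather than an explicit presentation. The ``in particular'' step --- surjecting $\wedge^{m-1}\Omega_{\PP^n}\vert_X$ onto $\wedge^{m-1}\Omega_X$ and using $\wedge^{m-1}\Omega_X\cong T_X\otimes\sO_X(K_X)$ --- is exactly the paper's argument, and your bookkeeping of the twists is consistent.
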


\begin{proof}
	For simplicity, let us denote $\wedge^{r}\Omega_{\PP^n}$ by $\Omega^{r}_{\PP^n}$ and denote the vector bundle $\sO_{\mathbb{P}^{n}}(1)^{\oplus (n-1)}$ by $E$. Let $l\subset \mathbb{P}^{n}$ be an arbitrary projective line.  Then we have the following twisted Koszul complex (see for instance \cite[Appendices, B.2]{Laz04a})
	\[0\rightarrow (\wedge^{n-1} E)\otimes \Omega^r_{\mathbb{P}^n}(r+1)\rightarrow \cdots\rightarrow E\otimes\Omega^r_{\mathbb{P}^n}(r+1)\rightarrow \Omega^r_{\mathbb{P}^n}(r+1)\rightarrow \Omega^r_{\mathbb{P}^n}(r+1)\vert_l\rightarrow 0.\]
	By Bott's formula, for any $1\leq j\leq n-1$, we have
	\[H^{j}(\PP^n, \Omega^{r}_{\mathbb{P}^n}(r+j+1))=0.\]
	As a consequence, the associated spectral sequence shows that the restriction
	\[H^0(\mathbb{P}^n,\Omega^r_{\mathbb{P}^n}(r+1))\rightarrow H^0(l,\Omega^r_{\mathbb{P}^n}(r+1)\vert_l)\]
	is surjective, see \cite[Lemma B.1.3]{Laz04a}. Since $\Omega^r_{\mathbb{P}^n}(r+1)\vert_l$ is globally generated and $l$ is arbitrary, it follows that $\Omega^r_{\mathbb{P}^n}(r+1)$ itself is globally generated.
	
	Finally, since $X$ is a submanifold of $\PP^n$, we have a surjection
	$
	\Omega^{m-1}_{\PP^n}\vert_X\rightarrow \Omega_X^{m-1}.
	$
	Now the last assert follows immediately from the duality $\Omega_X^{m-1}\cong T_X\otimes\sO_X(K_X)$.
\end{proof}

As a consequence, we have the following description of the nef cone of $\PP(T_X)$ for $X$ being a smooth cubic hypersurface.

\begin{corollary}
	Let $X\subset \PP^{n+1}$ be a smooth cubic hypersurface of dimension $n\geq 2$. Then $\zeta+\pi^*H$ is nef, where $H$ is a hyperplane section of $X$. Moreover, if $n\geq 3$, then the nef cone of
	$\PP(T_X)$ is generated by $\pi^* H$ and $\zeta+\pi^* H$.
\end{corollary}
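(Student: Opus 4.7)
The first assertion, that $\zeta+\pi^*H$ is nef, is an immediate consequence of Lemma \ref{lemma:freeness-twist-tangent-bundle}. By adjunction $K_X=-(n-1)H$ for the cubic, so $\sO_X(n)\otimes\sO_X(K_X)\cong\sO_X(H)$, and the lemma gives that $T_X(H)$ is globally generated. Therefore its tautological class $c_1(\sO_{\PP(T_X(H))}(1))=\zeta+\pi^*H$ is nef on $\PP(T_X)=\PP(T_X(H))$.

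For the nef-cone description when $n\geq 3$, the Lefschetz hyperplane theorem yields $\pic(X)=\Z\cdot H$, so $N^1(\PP(T_X))_{\R}$ is two-dimensional with generators $\zeta$ and $\pi^*H$, and the nef cone has exactly two extremal rays. One of them is $\R_{\geq 0}\cdot\pi^*H$: this class is nef but not ample, since $(\pi^*H)^{n+1}=0$ (it is pulled back from the $n$-dimensional $X$). The other extremal ray is of the form $\R_{\geq 0}\cdot(\zeta+b\pi^*H)$ with $b>0$, the strict positivity coming from the fact that $\zeta$ is not pseudoeffective, and in particular not nef, by Theorem \ref{thm:Tangent-hypersurface-not-pseudo-effective}. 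From the first part we know $b\leq 1$. The goal is therefore to prove $b\geq 1$, equivalently to exhibit a curve $C\subset\PP(T_X)$ not contained in a fibre of $\pi$ with $(\zeta+\pi^*H)\cdot C=0$.

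Such a curve I would produce as a minimal section $\widetilde{\ell}$ of $\PP(T_X|_\ell)\to\ell$ for a suitable line $\ell\subset X$: given a quotient $T_X|_\ell\twoheadrightarrow\sO_\ell(-1)$, the associated section satisfies $\zeta\cdot\widetilde{\ell}=-1$ and $\pi^*H\cdot\widetilde{\ell}=1$, hence $(\zeta+\pi^*H)\cdot\widetilde{\ell}=0$ while $\pi^*H\cdot\widetilde{\ell}>0$. Writing $T_X|_\ell=\sO_\ell(2)\oplus N_{\ell/X}$, the existence of such a quotient on $\PP^1$ is equivalent to $N_{\ell/X}$ containing $\sO_\ell(-1)$ as a direct summand. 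For $n=2$ this holds for each of the 27 lines on the cubic surface, and for $n=3$ it holds for the classical ``lines of the second type'' on a smooth cubic threefold, which form a one-parameter family with $N_{\ell/X}=\sO_\ell(-1)\oplus\sO_\ell(1)$.

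The main obstacle is to produce such a line for $n\geq 4$. My strategy is to take a generic three-dimensional linear section $Y=X\cap H_1\cap\cdots\cap H_{n-3}$, which is a smooth cubic threefold, and choose $\ell\subset Y$ of the second type, so that $N_{\ell/Y}=\sO_\ell(-1)\oplus\sO_\ell(1)$. The normal bundle of $\ell$ in $X$ then fits in the exact sequence
\[
0\to N_{\ell/Y}\to N_{\ell/X}\to N_{Y/X}|_\ell=\sO_\ell(1)^{n-3}\to 0,
\]
with extension class in $\Ext^1(\sO_\ell(1)^{n-3},N_{\ell/Y})\cong\C^{n-3}$ (only the $H^1(\sO_\ell(-2))^{n-3}$-summand contributes). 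If this class can be arranged to vanish by an appropriate choice of $H_1,\dots,H_{n-3}$ and $\ell$, then $N_{\ell/X}=\sO_\ell(-1)\oplus\sO_\ell(1)^{n-2}$ retains $\sO_\ell(-1)$ as a direct summand, completing the proof. I expect this vanishing to follow from a dimension count on the relevant incidence variety in $F(X)$ together with an interpretation of the obstruction as a second fundamental form of $Y\subset X$ along $\ell$; this is where I foresee the main technical difficulty.
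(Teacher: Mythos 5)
Your first half coincides with the paper's: Lemma \ref{lemma:freeness-twist-tangent-bundle} gives global generation of $T_X(1)$, hence nefness of $\zeta+\pi^*H$, and the second half is reduced (as in the paper) to exhibiting a curve on which $\zeta+\pi^*H$ vanishes and $\pi^*H$ is positive, namely the section of $\PP(T_X|_\ell)\to\ell$ associated to a quotient $T_X|_\ell\twoheadrightarrow\sO_{\PP^1}(-1)$ over a line of the second type. (Two minor remarks: invoking Theorem \ref{thm:Tangent-hypersurface-not-pseudo-effective} to get $b>0$ is superfluous and dangerously heavy, since the proof of that theorem for cubics of dimension $\geq 3$ relies on Proposition \ref{prop:cubic-not-modified-nef}, which in turn uses the nefness of $\zeta+\pi^*H$; once you have the curve $\widetilde{\ell}$ you get $b\geq 1$ directly and the issue disappears.)

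The genuine gap is the existence, for every smooth cubic hypersurface of dimension $n\geq 4$, of a line $\ell$ with $N_{\ell/X}\cong\sO_\ell(1)^{n-2}\oplus\sO_\ell(-1)$. The paper simply invokes this as the classical fact that lines of the second type exist on any smooth cubic hypersurface of dimension $\geq 2$ (the second-type locus $F_2(X)$ is a nonempty determinantal subscheme of the Fano scheme of lines, of expected codimension one; equivalently, $\ell$ is of the second type iff some $\PP^{n-1}$ is tangent to $X$ along all of $\ell$, and a Chern class computation on $F(X)$ shows this locus cannot be empty). Your substitute argument via a generic threefold linear section $Y$ does not close this gap: as you note, a line of the second type on $Y$ need only have $\sO_\ell(-1)\oplus\sO_\ell(1)$ as a \emph{subbundle} of $N_{\ell/X}$ with quotient $\sO_\ell(1)^{n-3}$, and such a subbundle exists inside $\sO_\ell(1)^{n-3}\oplus\sO_\ell^{\oplus 2}$ as well, so the line may perfectly well be of the first type on $X$; killing the extension class in $H^1(\sO_\ell(-2))^{\oplus(n-3)}$ by moving $(Y,\ell)$ is exactly the missing step, and you do not supply the dimension count or the identification of the obstruction that would make it work. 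As written, the proof is complete only for $n=3$; for $n\geq 4$ you should either cite the standard nonemptiness of $F_2(X)$ or replace the linear-section strategy by a direct argument on $X$.
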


\begin{proof}
	Since $X$ is an $n$-dimensional smooth cubic hypersurface, we have $\sO_X(K_X)\cong \sO_X(-n+1)$. By Lemma \ref{lemma:freeness-twist-tangent-bundle} above, the vector bundle $T_X(1)$ is globally generated. In particular, $\zeta+\pi^*H$ is nef. On the other hand, note that there exist lines of second type on $X$, i.e. a projective line $l\subset X$ such that
	\[
	T_X\vert_l\cong \sO_{\PP^1}(2)\oplus \sO_{\PP^1}(1)^{\oplus n-2}\oplus \sO_{\PP^1}(-1).
	\]
	Let $\overline{l}\subset \PP(T_X\vert_l)$ be the section corresponding to the quotient $T_X\vert_l\rightarrow \sO_{\PP^{1}}(-1)$. Then we have
	\[
	(\zeta+\varepsilon \pi^*H)\cdot\overline{l}=-1+\varepsilon.
	\]
	In particular, $\zeta+\varepsilon\pi^*H$ is nef only if $\varepsilon \geq 1$. If $n\geq 3$, then we have $\rho(\PP(T_X))=2$ and the result follows immediately.
\end{proof}

\subsection{Cubic surfaces}

Let $X$ be a smooth del Pezzo surface of degree $3$. Then $X$ is isomorphic to a smooth cubic surface in $\PP^3$ with $-K_X$ a hyperplane section. Moreover, it is well known that $X$ contains exactly $27$ lines, namely $E_i$ $(i=1,\dots,27)$.

Given a line $l:=E_i$ we can find a MMP, i.e. birational morphism $X \rightarrow \PP^2$ such that $X$ is the blow-up of six general points $p_1, \ldots, p_6$
and $l$ is the exceptional divisor over $p_1$. Anticanonical divisors on $X$ correspond to
cubics in $\PP^2$ passing through the points $p_1, \ldots, p_6$.
The union of the strict transform of a conic passing through  $p_1, \ldots, p_5$ and the strict transform of a line passing through $p_1$ and $p_6$ defines an effective divisor linearly equivalent to $-K_X-l$. Since this effective divisor
identifies to two concurrent lines (via the embedding $X \hookrightarrow \PP^3$)
one verifies immediately that $-K_X-l$ is nef and $(-K_X-l)^2=0$. A Riemann-Roch computation now shows that the
the complete linear system $\vert-K_X-l\vert$ is base point free and defines a conic bundle structure
\[
f_l\colon X\rightarrow \PP^1,
\]
whose fibres are the members in $\vert-K_X-l\vert$. Since $-K_X$ is ample and has degree two on the fibres, the fibration $f$ is a conic bundle. Since $X$ is smooth,
the fibration has no multiple fibres (apply \cite[III, Lemma 8.3]{BHPV04}).
Since $\rho(X)=7$, the fibration thus has exactly 5 singular fibres, each consisting
of a pair of concurrent lines.

The divisor classes $(-K_X-E_i)_{i=1, \ldots, 27}$ are linearly independent, so we obtain in this way 27 distinct conic bundle structures  $f_i\colon X\rightarrow \PP^1$.

\begin{proposition} \label{proposition-degree-3}
	Let $X$ be a smooth cubic surface. Then $T_X$ is not pseudoeffective.
\end{proposition}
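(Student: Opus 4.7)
The plan is to use the 27 conic bundles on $X$ to produce 27 effective divisors in $\PP(T_X)$ and derive a contradiction from the assumption of pseudoeffectivity via explicit class identities and intersection-theoretic positivity.

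For each of the 27 lines $E_i \subset X$, the complete linear system $|-K_X - E_i|$ defines a conic bundle $f_i \colon X \to \PP^1$ with five reducible fibres, each a pair of meeting $(-1)$-curves. Applying Corollary \ref{corollaryconicbundle} to $f_i$ produces a prime effective divisor $\check{\mathcal{C}}_i \subset \PP(T_X)$ of class
\[
[\check{\mathcal{C}}_i] = \zeta + \pi^* K_{X/\PP^1} = \zeta + \pi^*(-K_X - 2 E_i).
\]
At a general point $x \in X$ the 27 conics through $x$ have pairwise distinct tangent directions, so the 27 divisors $\check{\mathcal{C}}_i$ are pairwise distinct prime divisors in $\PP(T_X)$. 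Using the classical identity $\sum_{i=1}^{27} E_i = -9 K_X$ (which holds because $\sum_i E_i$ is $W(E_6)$-invariant in $N^1(X)$, hence a rational multiple of $K_X$, with coefficient pinned down by $(-K_X)\cdot E_i = 1$), summation yields
\[
\sum_{i=1}^{27} [\check{\mathcal{C}}_i] = 27\zeta + 9 \pi^*(-K_X), \qquad \text{i.e.}\qquad \zeta = \tfrac{1}{27}\sum_{i=1}^{27}[\check{\mathcal{C}}_i] - \tfrac{1}{3} \pi^*(-K_X).
\]

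Assume for contradiction that $\zeta$ is pseudoeffective. By Lemma \ref{lemma:freeness-twist-tangent-bundle} applied to the cubic hypersurface $X\subset\PP^3$, the bundle $T_X(1) = T_X(-K_X)$ is globally generated, so that $N := \zeta + \pi^*(-K_X)$ is nef on $\PP(T_X)$. The plan is to derive a contradiction by exhibiting a movable curve class $\gamma \subset \PP(T_X)$ with $\zeta \cdot \gamma < 0$, which contradicts the duality between the pseudoeffective and movable cones. The natural source of negativity is the 27 minimal sections $\tilde{E}_j\subset\PP(T_X)$ of the projection over each line $E_j$: each satisfies $\zeta\cdot\tilde{E}_j = -1$, and the incidence structure of the 27 lines forces $\tilde{E}_j\subset\check{\mathcal{C}}_i$ for precisely the ten indices $i$ with $E_i\cdot E_j = 1$, namely the ten lines forming the five reducible fibres of the conic bundle $f_j$. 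The class $\gamma$ is then built by averaging the rigid curves $\tilde{E}_j$ with weighting compatible with the class identity above, combined with intersection against the nef class $N$.

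The main obstacle is to show that this averaged $1$-cycle lies in the movable cone. The ``obvious'' movable classes — the fibre $F$ of $\pi$, $N^2$, $N\cdot\pi^*(-K_X)$, and $\pi^*(-K_X)^2$ — all pair non-negatively with $\zeta$: using the intersection numbers $\zeta^3 = c_1^2(T_X)-c_2(T_X) = 3-9 = -6$, $\zeta^2\cdot\pi^*(-K_X) = K_X^2 = 3$, and $\zeta\cdot\pi^*(-K_X)^2 = (-K_X)^2 = 3$, one computes $\zeta\cdot N^2 = 3$, $\zeta\cdot N\cdot\pi^*(-K_X) = 6$ and $\zeta\cdot\pi^*(-K_X)^2 = 3$, all non-negative. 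Hence a movable class witnessing non-pseudoeffectivity cannot arise from the generic nef calculus of $\PP(T_X)$ alone and must exploit the specific combinatorial and geometric structure of the 27 lines and their incidences with the divisors $\check{\mathcal{C}}_i$ and the rigid minimal sections $\tilde{E}_j$; constructing it rigorously is the technical heart of the argument.
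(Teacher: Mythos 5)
Your setup (the 27 conic bundles, the total dual VMRTs $\check{\mathcal{C}}_i$ of class $\zeta+\pi^*(-K_X-2E_i)$ via Corollary \ref{corollaryconicbundle}, and the nef class $\zeta+\pi^*(-K_X)$ from Lemma \ref{lemma:freeness-twist-tangent-bundle}) coincides with the paper's, and your intersection numbers are correct. But the proof is not complete, and you say so yourself: the contradiction is supposed to come from a movable curve class $\gamma$ with $\zeta\cdot\gamma<0$, and you never construct it. Your own computations show why this is not a minor omission: every class you can build from $\zeta+\pi^*(-K_X)$ and $\pi^*(-K_X)$ pairs non-negatively with $\zeta$, and the curves that do pair negatively (the minimal sections $\tilde E_j$ over the lines) are rigid. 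Moreover your summation identity $\sum_i[\check{\mathcal{C}}_i]=27\zeta+9\pi^*(-K_X)$ points in the wrong direction — it exhibits $\zeta+\frac13\pi^*(-K_X)$ as effective, which is a positivity statement and cannot by itself obstruct pseudoeffectivity of $\zeta$. So the "technical heart" you defer is in fact the entire proof.

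The missing idea in the paper is a restriction argument combined with the divisorial Zariski decomposition, not a movable-curve duality argument. One computes
$\zeta\cdot[\check{\mathcal{C}}_i]\cdot(\zeta+\pi^*H)=-1$ and $[\check{\mathcal{C}}_i]^2\cdot(\zeta+\pi^*H)=-4$ (with $H=-K_X$), so that $(\zeta-\lambda[\check{\mathcal{C}}_i])\cdot[\check{\mathcal{C}}_i]\cdot(\zeta+\pi^*H)=-1+4\lambda<0$ for $\lambda<\frac14$; since $\zeta+\pi^*H$ is nef, the restriction of $\zeta-\lambda[\check{\mathcal{C}}_i]$ to $\check{\mathcal{C}}_i$ is then not pseudoeffective for $\lambda<\frac14$. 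If $\zeta$ were pseudoeffective, the positive part of its divisorial Zariski decomposition restricts pseudoeffectively to every prime divisor, so each $\check{\mathcal{C}}_i$ must occur in the negative part with coefficient at least $\frac14$. Hence $\zeta-\frac14\sum_{i=1}^{27}[\check{\mathcal{C}}_i]$ would be pseudoeffective; intersecting with a fibre $l$ of $\pi$ (which \emph{is} mobile, and meets each $\check{\mathcal{C}}_i$ in degree $1$) gives $1-\frac{27}{4}<0$, a contradiction. Note that the mobile curve is used only at the very end, against the class left over after subtracting the forced negative part — this is exactly the mechanism your proposal is missing. To repair your write-up you would need to add Step 1 of the paper's proof (the non-pseudoeffectivity of $(\zeta-\lambda[\check{\mathcal{C}}_i])|_{\check{\mathcal{C}}_i}$ for $\lambda<\frac14$) and then invoke Boucksom's divisorial Zariski decomposition as above.
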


\begin{remark*}
Slightly weaker statements are shown in  \cite[Thm.5.2]{Mal20} and \cite{BogomolovDeOliveira2008}.
\end{remark*}

\begin{proof}
Let $E_i$ be one of the 27 lines on $X$, and denote by
$f_i\colon X\rightarrow \PP^1$ the corresponding conic bundle structure.
Since $f_i$ has no multiple fibres, we know by
Corollary \ref{corollaryconicbundle} that the class of the associated total dual VRMT
is
\begin{equation} \label{eqnconicVMRT}
  [\check{\mathcal{C}}_i]
  =
  \zeta + \pi^* (K_X+2F_i),
\end{equation}
where $F_i$ is a general $f_i$-fibre.

{\em Step 1. We claim that for every $\lambda_i <\frac{1}{4}$, the restriction of
$\zeta-\lambda_i [\check{\mathcal{C}}_i]$ to $\check{\mathcal{C}}_i$ is not pseudoeffective.}

Recall that $c_1(X)^2=3$ and $c_2(X)=9$, so we have
	$	\zeta^3 = c_1(X)^2-c_2(X)=-6$.
		By Lemma \ref{lemma:freeness-twist-tangent-bundle} we know that the class $\zeta+\pi^* H$ is nef.
	Since
	$$
	\zeta^3=-6, \ \pi^* H \cdot \zeta^2=3, \  \pi^* F \cdot  \zeta^2=2, \  \pi^* (H \cdot F) \cdot \zeta = 2
	$$
	we obtain
	$$
	\zeta \cdot [\check{\mathcal{C}}_i] \cdot (\zeta+\pi^* H) = -1
	$$
	and
	$$
	[\check{\mathcal{C}}_i]^2 \cdot (\zeta+\pi^* H) = -4.
	$$
	In particular for $\lambda_i < \frac{1}{4}$, we have
	$$
	(\zeta-\lambda_i[\check{\mathcal{C}}_i]) \cdot [\check{\mathcal{C}}_i] \cdot (\zeta+\pi^* H) = -1+4\lambda_i < 0.
	$$
	Since $\zeta+\pi^* H$ is nef, this implies that the restriction of $\zeta-\lambda_i[\check{\mathcal{C}}_i]$
	to $\check{\mathcal{C}}_i$ is not pseudoeffective for $\lambda_i < \frac{1}{4}$.

{\em Step 2. Conclusion.}	
	Arguing by contradiction we assume that $\zeta$ is pseudoeffective. By what precedes
	we know that $\zeta|_{\check{\mathcal{C}}_i}$ is not pseudoeffective, so $[\check{\mathcal{C}}_i]$ appears in the negative part of the divisorial Zariski decomposition
	\begin{equation} \label{eqnZdecomposition}
	\zeta = \sum_{j=1}^m c_j [D_j] + P
	\end{equation}
	where $c_j>0$ and $P$ is the positive part, in particular $P|_D$ is pseudoeffective for every prime divisor $D \subset \PP(T_X)$. Up to renumbering we can assume without loss of generality that $\check{\mathcal{C}}_i=D_i$. Since
	$$
	(\zeta-c_i[\check{\mathcal{C}}_i])|_{\check{\mathcal{C}}_i}
	= \sum_{j=2}^m c_j [D_j \cap \check{\mathcal{C}}_i] + P|_{\check{\mathcal{C}}_i}
	$$
	is pseudoeffective, we know by Step 1 that $c_i \geq \frac{1}{4}$
	for each of the divisors $\check{\mathcal{C}}_1, \ldots, \check{\mathcal{C}}_{27}$.
	In particular the class
	$\zeta - \sum_{i=1}^{27} \frac{1}{4} [\check{\mathcal{C}}_i]
	$
	is pseudoeffective. Yet if $l \subset \PP(T_X)$ is a fibre of the projection $\pi$, then
	$$
	(\zeta - \sum_{i=1}^{27} \frac{1}{4} [\check{\mathcal{C}}_i]) \cdot l
	= 1 - 27 \cdot \frac{1}{4}<0.
	$$
	Since $l$ is a mobile curve we have reached a contradiction.
\end{proof}

\subsection{Del Pezzos of degree four}

Let $X$ be a smooth del Pezzo surface of degree $d=4$. Then $X$ is isomorphic to a complete intersection of two quadrics in $\PP^4$ (this is classical, see \cite[Sect.V.1]{Dem76} for a detailed exposition).

\begin{proposition}\cite[\S\,3]{KST89}
	\label{Prop:lines-conics-degree-4}
	Let $X$ be a smooth del Pezzo surface of degree $4$.
	\begin{enumerate}[(a)]
		\item There exist five pairs of pencils of conics $\vert C_i\vert$ and $\vert C_i'\vert$ such that
		\[
		C_i+C_i'\in\vert-K_X\vert.
		\]
		
		\item For each $i \in \{1, \ldots, 5\}$, both $\vert C_i\vert$ and $\vert C_i'\vert$ contain exactly four degenerate members consisting of two lines contained $X$. The entire set of those lines contains exactly all the $16$ lines on $X$.
	\end{enumerate}
\end{proposition}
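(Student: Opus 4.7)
The plan is to use the standard realization $\pi\colon X \to \PP^2$ as the blow-up at five general points $p_1, \ldots, p_5$, with $\pic(X) = \Z L \oplus \bigoplus_i \Z E_i$ and $-K_X = 3L - \sum_{i=1}^5 E_i$, where $L$ is the pullback of the hyperplane class. Under this model the sixteen lines on $X$ (i.e.\ the $(-1)$-curves meeting $-K_X$ with multiplicity one) are exactly the five $E_i$, the ten strict transforms $L - E_i - E_j$ of lines through two of the blown-up points, and the single class $2L - \sum_{a=1}^5 E_a$, the strict transform of the unique conic through all five points. These identifications are standard.

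For (a), I set $C_i := L - E_i$ and $C_i' := 2L - \sum_{j \neq i} E_j$. A direct intersection computation gives $C_i^2 = (C_i')^2 = 0$ and $-K_X \cdot C_i = -K_X \cdot C_i' = 2$, so by Riemann--Roch (or by viewing them as strict transforms of the pencil of lines through $p_i$ and of the pencil of conics through the other four points) each defines a basepoint-free pencil of smooth rational curves. The equality $C_i + C_i' = 3L - \sum_a E_a = -K_X$ is then immediate, giving the five pairs.

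For (b), I enumerate the reducible members of each pencil. In $|C_i|$, a line in $\PP^2$ through $p_i$ becomes reducible on $X$ precisely when it also passes through one of the remaining four points $p_j$; the resulting divisor is $(L - E_i - E_j) + E_j$, giving four degenerate members, each a pair of lines. In $|C_i'|$, a conic through the four points $\{p_j : j \neq i\}$ is reducible in two distinct ways: either it splits as a union of two lines realising a $2{+}2$ partition of these four points (three such partitions, giving three members of the form $(L - E_j - E_k) + (L - E_l - E_m)$ with $\{j,k,l,m\} = \{1,\ldots,5\} \setminus \{i\}$), or it is the unique conic through all five of the $p_a$, which produces the member $(2L - \sum_a E_a) + E_i$. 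That is four degenerate members.

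Finally, assembling the lists: each exceptional curve $E_i$ appears in the ``through-all-five-points'' degenerate member of $|C_i'|$; each $L - E_i - E_j$ appears in both $|C_i|$ and $|C_j|$; and the class $2L - \sum_a E_a$ appears in every $|C_i'|$. Hence the union over all ten pencils of the irreducible components of their degenerate members is precisely the set of sixteen lines, which proves (b). The argument is entirely classical and presents no substantive obstacle; the only point of care is not to overlook the ``through-all-five-points'' degeneration, without which $|C_i'|$ would appear to have only three reducible members.
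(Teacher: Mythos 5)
Your argument is correct and complete. Note, however, that the paper offers no proof of this proposition at all: it is stated as a known fact with a citation to Kunyavski\u{\i}--Skorobogatov--Tsfasman \cite[\S\,3]{KST89}, so there is no internal argument to compare yours against. What you have written is the standard self-contained verification via the model of $X$ as the blow-up of $\PP^2$ at five general points, and all the computations check out: the ten classes $L-E_i$ and $2L-\sum_{j\neq i}E_j$ are exactly the classes $C$ with $C^2=0$ and $-K_X\cdot C=2$ (a short Cauchy--Schwarz argument rules out $a\geq 3$ in $C=aL-\sum m_iE_i$, which you could add if you want to see that these are \emph{all} the conic pencils); the pairing $C_i+C_i'=-K_X$ is immediate; each pencil has exactly four degenerate fibres, consistent with the formula $K_X^2=8-s$ for a conic bundle over $\PP^1$; and the bookkeeping of the sixteen $(-1)$-curves among the degenerate fibres is as you describe. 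Your warning about the ``through-all-five-points'' degeneration of $|C_i'|$ is exactly the right point of care. This approach is also consistent with how the paper handles the analogous facts it does prove by hand (the conic bundles on cubic surfaces and on the degree-five del Pezzo are treated via the same blow-up model), so your proof fits naturally into the paper's framework.
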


\begin{proposition} \label{proposition-degree-4}
	Let $\check{\mathcal{C}}_i$ and $\check{\mathcal{C}}_i'$ be the dual VMRTs associated to $\vert C_i\vert$ and $\vert C'_i\vert$, respectively. Then we have
	\[
	[\check{\mathcal{C}}_i]+[\check{\mathcal{C}}_i']=2\zeta.
	\]
	In particular $T_X$ is pseudoeffective, but it is not big.
\end{proposition}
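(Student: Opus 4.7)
I plan to prove the identity $[\check{\mathcal{C}}_i]+[\check{\mathcal{C}}_i']=2\zeta$ by applying Corollary \ref{corollaryconicbundle} to each of the conic bundles $f_i,f_i'\colon X\to\PP^1$ defined by the pencils $|C_i|,|C_i'|$ (Proposition \ref{Prop:lines-conics-degree-4}). This yields $[\check{\mathcal{C}}_i]=\zeta+\pi^*(K_X+2C_i)$ and $[\check{\mathcal{C}}_i']=\zeta+\pi^*(K_X+2C_i')$; summing and using $C_i+C_i'\sim -K_X$ makes the $\pi^*$-parts collapse to zero, leaving $2\zeta$. In particular $2\zeta$ is represented by an effective divisor, so $T_X$ is pseudoeffective.

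For non-bigness I will argue by contradiction via the divisorial Zariski decomposition on $\PP(T_X)$. By Lemma \ref{lemmasubstractbig} (together with Kodaira's lemma to reduce from big to ample twists), it suffices to show that $\zeta-\lambda\pi^*A$ is not pseudoeffective for any ample class $A$ on $X$ and any $\lambda>0$. Assuming it is, I write $\zeta-\lambda\pi^*A=N+P$ with $N$ effective and $P$ modified nef. Fix $i$ and let $\tilde l_i\subset\check{\mathcal{C}}_i$, $\tilde l_i'\subset\check{\mathcal{C}}_i'$ be minimal sections of $\PP(T_X)$ over general fibres of $f_i,f_i'$. Using $\zeta\cdot\tilde l_i=0$, $\pi_*\tilde l_i=[C_i]$, and the numerical data $C_i^2=(C_i')^2=0$, $C_i\cdot C_i'=2$, $K_X\cdot C_i=K_X\cdot C_i'=-2$, a short computation yields
\[
[\check{\mathcal{C}}_i]\cdot\tilde l_i=-2,\quad [\check{\mathcal{C}}_i']\cdot\tilde l_i=2,\quad [\check{\mathcal{C}}_i]\cdot\tilde l_i'=2,\quad [\check{\mathcal{C}}_i']\cdot\tilde l_i'=-2.
\]
Since $(\zeta-\lambda\pi^*A)\cdot\tilde l_i=-\lambda A\cdot C_i<0$ and the $\tilde l_i$'s cover $\check{\mathcal{C}}_i$, the divisor $\check{\mathcal{C}}_i$ must appear in $N$ with some coefficient $a_i>0$, and symmetrically $\check{\mathcal{C}}_i'$ with $b_i>0$. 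Writing $N=a_i[\check{\mathcal{C}}_i]+b_i[\check{\mathcal{C}}_i']+\tilde M$ with $\tilde M$ effective and supported off $\check{\mathcal{C}}_i\cup\check{\mathcal{C}}_i'$, for generic minimal sections one has $\tilde M\cdot\tilde l_i,\tilde M\cdot\tilde l_i'\geq 0$ and $P\cdot\tilde l_i,P\cdot\tilde l_i'\geq 0$ (the latter because $P|_{\check{\mathcal{C}}_i}$ and $P|_{\check{\mathcal{C}}_i'}$ are pseudoeffective and the sections are movable there). Intersecting the decomposition with $\tilde l_i$ and $\tilde l_i'$ and summing, the $a_i$- and $b_i$-terms cancel and I obtain
\[
\lambda\,A\cdot K_X=-\lambda\,A\cdot(C_i+C_i')=\tilde M\cdot(\tilde l_i+\tilde l_i')+P\cdot(\tilde l_i+\tilde l_i')\geq 0,
\]
contradicting $A\cdot K_X<0$ (since $A$ and $-K_X$ are both ample).

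The main obstacle is the bookkeeping of the Zariski decomposition: one must justify that both $\check{\mathcal{C}}_i$ and $\check{\mathcal{C}}_i'$ are forced into the negative part, and that every remaining contribution to the summed intersection is non-negative. The former uses that the minimal sections cover the total dual VMRTs, while the latter uses movability of these sections together with modified nefness of $P$. Once these items are in place, the clean cancellation of $a_i$ and $b_i$ — a numerical reflection of the identity $[\check{\mathcal{C}}_i]+[\check{\mathcal{C}}_i']=2\zeta$ — drives the contradiction.
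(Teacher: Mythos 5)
Your proof of the class identity and of pseudoeffectivity is exactly the paper's: apply Corollary \ref{corollaryconicbundle} to the ten conic bundles and use $C_i+C_i'\sim -K_X$. For non-bigness, however, you take a genuinely different route, and it is correct. The paper's argument is shorter and purely about effective representatives: assuming $T_X$ big, it picks an effective $D\sim a\zeta+b\pi^*K_X$ with $a$ minimal for the given $b$, notes that the minimal sections $B$ dominating each $\check{\mathcal{C}}_i$, $\check{\mathcal{C}}_i'$ satisfy $(a\zeta+b\pi^*K_X)\cdot B=-2b<0$, forces all ten total dual VMRTs into $D$, and subtracts them to get an effective divisor in $|(a-10)\zeta+b\pi^*K_X|$, contradicting minimality of $a$. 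Your argument instead runs through the divisorial Zariski decomposition of $\zeta-\lambda\pi^*A$ and exploits the antisymmetric intersection table of $\check{\mathcal{C}}_i$, $\check{\mathcal{C}}_i'$ against the two families of minimal sections, so that the unknown coefficients $a_i,b_i$ of the negative part cancel upon summing; this is essentially the technique the paper deploys for the cubic surface (Proposition \ref{proposition-degree-3}), transported to degree four. Your numerical inputs all check out ($\zeta\cdot\tilde l_i=0$, $\pi_*\tilde l_i=[C_i]$, $C_i^2=0$, $C_i\cdot C_i'=2$, $K_X\cdot C_i=-2$), the forcing of both $\check{\mathcal{C}}_i$ and $\check{\mathcal{C}}_i'$ into the negative part is justified because the minimal sections cover these surfaces, and the non-negativity of the residual terms follows from movability of the sections inside each $\check{\mathcal{C}}_i$ together with $P|_{\check{\mathcal{C}}_i}$ being pseudoeffective. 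What the paper's approach buys is brevity and the avoidance of any Zariski-decomposition bookkeeping; what yours buys is a uniform template (the same one that handles degree three, where no effective representative of a multiple of $\zeta$ exists) and the conceptual observation that the cancellation of the negative-part coefficients is the numerical shadow of $[\check{\mathcal{C}}_i]+[\check{\mathcal{C}}_i']=2\zeta$.
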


This statement was shown independently by Mallory \cite[Thm.5.6]{Mal20} with a completely different technique.

\begin{proof}
By Proposition \ref{Prop:lines-conics-degree-4} the conic bundle structures have no multiple fibres,
so by Corollary \ref{corollaryconicbundle} the class of $\check{\mathcal{C}}_i$ (resp. $\check{\mathcal{C}}_i'$)
is $\zeta+\pi^* (K_X+2 C_i)$ (resp. $\zeta+\pi^* (K_X+2 C_i')$). By Proposition \ref{Prop:lines-conics-degree-4} this implies that
$$
[\check{\mathcal{C}}_i]+[\check{\mathcal{C}}_i']=2\zeta.
$$
Thus we have $\kappa(\zeta) \geq 0$, in particular $T_X$ is pseudoeffective.

Arguing by contradiction we assume that $T_X$ is big. Then there exist positive integers $a$, $b$ and an effective divisor $D\subset \PP(T_X)$ such that
	\[
	D\sim a\zeta+b\pi^*K_X.
	\]
	We can assume without loss of generality that for a given $b \in \N$, we have chosen $a \in \N$ minimal with this property.
	
	For each $i \in \{1, \ldots, 5\}$, the total dual VMRTs $[\check{\mathcal{C}}_i]$ and $[\check{\mathcal{C}}_i']$ are dominated by rational curves $B$ such that $\zeta \cdot B=0$
	(see the construction of minimal sections in Subsection \ref{subsectionVMRT}).
	Since $(a\zeta+b\pi^*K_X) \cdot B=-2b<0$, it
	follows that $\check{\mathcal{C}}_i$ and $\check{\mathcal{C}}_i$ are contained in the support of $D$. In particular, we have
	\[
	(a-10)\zeta+b\pi^*K_X\sim D'=D-\sum_{i=1}^5 (\check{\mathcal{C}}_i+\check{\mathcal{C}}_i')\geq 0.
	\]
	Yet this contradicts the minimality of $a$.
\end{proof}

\begin{remark*}
Since the anticanonical class of $\PP(T_X)$ is $2 \zeta$, Proposition \ref{proposition-degree-4} shows that for a del Pezzo surface of degree four, the variety $\PP(T_X)$ has an effective (reducible) anticanonical divisor. In fact much more is true: the anticanonical system
contains smooth elements $S \subset \PP(T_X)$, these are K3 surfaces of degree $8$. The base locus of $-K_{\PP(T_X)}$ consists exactly of the 16 curves $l_i'$ defined by quotients $T_X|_{l_i} \rightarrow \sO_{\PP^1}(-1)$ where $l_i \subset X$ is a line in $X$. These 16 curves are disjoint,
so every smooth member of the anticanonical system is a K3 surface of Kummer type (see \cite[Remark 8.6.9]{Dol12} for some classical constructions).
\end{remark*}

\subsection{Del Pezzos of degree five}

Let $X$ be a smooth del Pezzo surface of degree $5$, and let
$\mu\colon Y\rightarrow X$ be the blow-up at a general point $x \in X$.
Then $Y$ is a del Pezzo surface of degree four, and the exceptional divisor $E$ is a line
(via the embedding $Y \hookrightarrow \PP^4$). By Proposition \ref{Prop:lines-conics-degree-4},
for every pair of pencils of conics on $Y$, namely $\vert C_i\vert$ and $\vert C'_i\vert$, there exists exactly one which contains $E$ as a component of one of its degenerate members.
We may assume that it is $\vert C_i\vert$, then the push-forward is a pencil of conics $\vert C_i\vert$ on $X$. Thus we obtain five distinct pencils of conics on $X$. Since $X$ is the blow-up of $\PP^2$ in four general points, it is classically known that its conics arise as strict transforms of lines passing through one point or smooth conics passing through the four points. In particular if
$C_i$ and $C_j$ are conics belonging to distinct pencils, we have $C_i \cdot C_j=1$. With this information in mind an intersection computation shows that
\begin{equation} \label{equation-5-conics}
   - 2 K_X = \sum_{i=1}^5 C_i.
\end{equation}

\begin{proposition} \label{proposition-degree-5}
	Let $X$ be a smooth del Pezzo surface of degree $5$. Then there exists an effective divisor $D\subset \PP(T_X)$ such that $D\sim 5\zeta+\pi^*K_X$. In particular $T_X$ is big.
\end{proposition}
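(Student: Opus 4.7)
The plan is to produce the required divisor as a sum of total dual VMRTs of the five conic bundle structures furnished by the pencils $|C_i|$, and then feed the resulting linear equivalence into Lemma \ref{lemmasubstractbig}.

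First, each pencil $|C_i|$ is basepoint free (since $C_i^2=0$ and $-K_X\cdot C_i=2$), so it defines a conic bundle structure $f_i\colon X\rightarrow \PP^1$ whose general fibre is the smooth conic $C_i$. I apply Corollary \ref{corollaryconicbundle} to each $f_i$: denoting by $\check{\mathcal{C}}_i\subset \PP(T_X)$ the total dual VMRT associated to the family of fibres, we have
\[
[\check{\mathcal{C}}_i] \sim \zeta + \pi^*K_{X/\PP^1} = \zeta + \pi^*(K_X+2C_i).
\]
In particular each $\check{\mathcal{C}}_i$ is an effective prime divisor on $\PP(T_X)$.

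Next I sum over $i=1,\dots,5$ and use the key identity \eqref{equation-5-conics}, i.e.\ $\sum_{i=1}^5 C_i = -2K_X$, which gives
\[
\sum_{i=1}^{5}[\check{\mathcal{C}}_i] \sim 5\zeta + \pi^*\!\left(5K_X + 2\sum_{i=1}^5 C_i\right) = 5\zeta + \pi^*(5K_X - 4K_X) = 5\zeta + \pi^*K_X.
\]
Setting $D:=\sum_{i=1}^{5}\check{\mathcal{C}}_i$ we obtain an effective divisor with $D\sim 5\zeta+\pi^*K_X$, proving the first claim.

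Finally, to deduce bigness, I rewrite this relation as
\[
\zeta - \pi^*\!\left(-\tfrac{1}{5}K_X\right) \sim \tfrac{1}{5}D,
\]
so $\zeta - \pi^*(-\tfrac{1}{5}K_X)$ is pseudoeffective. Since $-\tfrac{1}{5}K_X$ is an ample (hence big) $\Q$-divisor class on $X$, Lemma \ref{lemmasubstractbig} gives that $T_X$ is big. There is no real obstacle in this argument; the only point requiring care is verifying that the five pencils indeed exist and satisfy the identity $\sum C_i = -2K_X$, but both facts are recorded in the discussion preceding the proposition.
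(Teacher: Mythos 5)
Your proof is correct and follows essentially the same route as the paper: both identify the five conic bundle structures, compute $[\check{\mathcal{C}}_i]=\zeta+\pi^*(K_X+2C_i)$ via Corollary \ref{corollaryconicbundle}, sum using $\sum C_i=-2K_X$ to get the effective divisor in $|5\zeta+\pi^*K_X|$, and conclude bigness from Lemma \ref{lemmasubstractbig}. No issues.
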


\begin{proof}
Denote by $\holom{f_i}{X}{\PP^1}$ the conic bundle structures defined by one of the five pencils of conics
$\vert C_i\vert$.
By Proposition \ref{Prop:lines-conics-degree-4} the conic bundle structures have no multiple fibres,
so by Corollary \ref{corollaryconicbundle} the class of the total dual VMRT $\check{\mathcal{C}}_i$
is $\zeta+\pi^* (K_X+2 C_i)$. By Equation \eqref{equation-5-conics} this implies that
$$
\zeta - \pi^* (\frac{-K_X}{5}) = \frac{1}{5} \sum_{i=1}^5 [\check{\mathcal{C}}_i]
$$
is pseudoeffective.
Since $-K_X$ is ample, Lemma \ref{lemmasubstractbig} shows that $\zeta$ is big.
\end{proof}

\begin{proof}[Proof of Theorem \ref{THM:bigness-pseudo-effectivity-del-Pezzo-surfaces}]
    By Proposition \ref{proposition-degree-3} and Corollary \ref{corollarygodown} we know
    that $T_X$ is not pseudoeffective if $d \leq 3$. If $d \geq 6$, then $X$ is toric, so $T_X$
    is big by \cite[Cor.1.3]{Hsi15}. The remaining cases $d=4$ and $d=5$ are settled by the Propositions \ref{proposition-degree-4} and \ref{proposition-degree-5}.
\end{proof}

\section{Twisted symmetric vector fields on hypersurfaces}

\subsection{Plethysm and vanishing theorem}

Let us recall the following results related to the existence of symmetric differential forms.

\begin{theorem}\cite{Schneider1992}
	\label{thm:Schneider's-vanishing}
	Let $X\subset \PP^N$ be a projective manifold of dimension $n$ with $n>\frac{N}{2}$. Then for any integers $m$, $k$ such that $k\geq m+1$, we have
	\[
	H^0(X,{\rm Sym}^k\Omega_X\otimes \sO_X(m))=0.
	\]
\end{theorem}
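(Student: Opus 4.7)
The plan is to compare symmetric differential forms on $X$ with those on the ambient $\PP^N$, using the Euler sequence on $\PP^N$ and the conormal sequence for $X\subset\PP^N$. First, the Euler sequence
\[
0 \to \Omega_{\PP^N}(1) \to V^* \otimes \sO_{\PP^N} \to \sO_{\PP^N}(1) \to 0,
\]
with $V^* = H^0(\PP^N,\sO_{\PP^N}(1))$, gives, after taking $k$-th symmetric powers of the injection on the left, a natural embedding $\Sym^k \Omega_{\PP^N}(k) \hookrightarrow \Sym^k V^* \otimes \sO_{\PP^N}$. Restricting to $X$ and twisting by $\sO_X(m-k)$ yields
\[
\Sym^k \Omega_{\PP^N}|_X(m) \hookrightarrow \Sym^k V^* \otimes \sO_X(m-k).
\]
Since $\sO_X(1)$ is ample and $m - k \leq -1$ by hypothesis, $H^0(X, \sO_X(m-k)) = 0$, so $H^0(X, \Sym^k \Omega_{\PP^N}|_X(m)) = 0$.

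Second, the conormal sequence $0 \to N^*_{X/\PP^N} \to \Omega_{\PP^N}|_X \to \Omega_X \to 0$ produces a short exact sequence
\[
0 \to \sK \to \Sym^k \Omega_{\PP^N}|_X \to \Sym^k \Omega_X \to 0,
\]
where $\sK$ carries a natural filtration with graded pieces $\Sym^j N^*_{X/\PP^N} \otimes \Sym^{k-j}\Omega_X$ for $j=1,\ldots,k$. Tensoring with $\sO_X(m)$ and applying the first step, the long exact sequence in cohomology shows that $H^0(X, \Sym^k \Omega_X(m))$ injects into $H^1(X, \sK(m))$. The theorem thus reduces to the vanishing $H^1(X, \sK(m)) = 0$, which by the filtration reduces further to the vanishings $H^1(X, \Sym^j N^*_{X/\PP^N} \otimes \Sym^{k-j}\Omega_X(m)) = 0$ for each $j = 1, \ldots, k$.

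The main obstacle is these $H^1$ vanishings involving the conormal bundle, and this is where the dimension hypothesis $n > N/2$ enters decisively. Via Barth--Lefschetz (or Sommese-type) comparison theorems, the low-degree cohomology of suitable bundles on $X$ is governed by their analogues on $\PP^N$, where Bott's formula provides the needed vanishings. I expect the cleanest path to be an induction on $k$, using positivity properties of $N_{X/\PP^N}(-1)$ (which is globally generated, as a quotient of $V \otimes \sO_X$ via the Euler sequence composed with the normal quotient) together with Kodaira--Akizuki--Nakano vanishing with respect to the ample class $\sO_X(1)$. Tracking the interplay between the filtration degree $j$, the twist $m$, and the dimensional hypothesis $n > N/2$ will be the central technical difficulty.
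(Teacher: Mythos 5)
The paper does not prove this statement; it quotes it from \cite{Schneider1992}, so your argument has to stand on its own, and it does not. Your first two steps are correct but routine: the Euler sequence gives $H^0(X,\Sym^k\Omega_{\PP^N}|_X\otimes \sO_X(m))=0$ because $m-k\leq -1$, and the conormal sequence then injects $H^0(X,\Sym^k\Omega_X(m))$ into $H^1(X,\sK(m))$, with $\sK$ filtered by $\Sym^j N^*_{X/\PP^N}\otimes \Sym^{k-j}\Omega_X$ for $j\geq 1$. Observe that nothing up to this point uses the hypothesis $n>\frac{N}{2}$: the entire content of the theorem is concentrated in the $H^1$-vanishings, which you explicitly defer as ``the central technical difficulty''. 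A reduction that stops exactly where the main hypothesis must enter is not a proof.

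Moreover, the route you propose for closing the gap faces concrete obstructions. For $j=1$ the graded piece is $N^*_{X/\PP^N}\otimes\Sym^{k-1}\Omega_X(m)$, which is essentially the original object shifted up one cohomological degree; an induction on $k$ would then require $H^2$-vanishing at level $k-2$, and so on, so you would have to control a whole ladder of higher cohomology groups of equally complicated bundles, none of which is addressed. At the other end, $\Sym^k N^*_{X/\PP^N}(m)$ is the dual of the ample bundle $\Sym^k\bigl(N_{X/\PP^N}(-1)\bigr)\otimes\sO_X(k-m)$, but its rank $\binom{k+c-1}{c-1}$ (with $c=N-n$, which can be $\geq 2$ under the hypothesis) grows without bound in $k$, so rank-sensitive vanishing theorems of Le~Potier--Sommese type, which need the rank to be at most $n-1$ to kill $H^1$, cannot apply for all $k$. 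Finally, Barth--Lefschetz comparison theorems concern constant (or very special homogeneous) coefficients and give no handle on $\Sym^jN^*_{X/\PP^N}\otimes\Sym^{k-j}\Omega_X(m)$. As it stands the key step is missing, and the tools named do not supply it.
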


\begin{remark} \label{remarkschneider}
    Let $X \subset \PP^4$ be an anticanonically embedded del Pezzo surface of degree four. By Proposition \ref{proposition-degree-4} we have
    $$
      H^0(X,{\rm Sym}^2 T_X) \cong  H^0(X,{\rm Sym}^2\Omega_X \otimes \sO_X(2)) \neq 0.
    $$
    Indeed we are exactly in the limit case where Schneider's theorem does not apply
    (we have $2=n=\frac{N}{2}$ and $k=m=2$). This explains the difference between hypersurfaces and complete intersections of higher codimension.
\end{remark}

\begin{theorem}\cite[Theorem B]{BogomolovDeOliveira2008}
	\label{thm:Bogomolov-Oliveira-vanishing}
	Let $X\subset \PP^{n+1}$ be a smooth hypersurface of degree $d\geq 3$. If $n\geq 2$, for any positive integer $k$, we have
	\[
	H^0(X,{\rm Sym}^k\Omega_X\otimes \sO_{X}(k))=0.
	\]
\end{theorem}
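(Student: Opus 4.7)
The strategy is to reduce the vanishing to cohomological statements on the ambient projective space via the conormal sequence of $X \subset \PP^{n+1}$ and the Euler sequence, and then invoke Bott's formula. Taking $k$-th symmetric powers of the conormal sequence
\[
0 \to \O_X(-d) \to \Omega_{\PP^{n+1}}|_X \to \Omega_X \to 0
\]
produces a short exact sequence
\[
0 \to \O_X(-d) \otimes S^{k-1}(\Omega_{\PP^{n+1}}|_X) \to S^k(\Omega_{\PP^{n+1}}|_X) \to S^k \Omega_X \to 0.
\]
Twisting by $\O_X(k)$ and using $S^k(\Omega_{\PP^{n+1}}|_X)(k) = S^k(\Omega_{\PP^{n+1}}(1))|_X$, the long exact cohomology sequence reduces the problem to proving the vanishing of $H^0(X, S^k(\Omega_{\PP^{n+1}}(1))|_X)$ together with $H^1(X, S^{k-1}(\Omega_{\PP^{n+1}}|_X) \otimes \O_X(k-d))$.

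For the first vanishing, I use the Euler sequence $0 \to \Omega_{\PP^{n+1}}(1) \to \O_{\PP^{n+1}}^{\oplus(n+2)} \to \O_{\PP^{n+1}}(1) \to 0$. Its $k$-th symmetric power equips $S^k \O_{\PP^{n+1}}^{\oplus(n+2)}$ with a filtration whose graded pieces are $S^j(\Omega_{\PP^{n+1}}(1)) \otimes \O_{\PP^{n+1}}(k-j)$ for $0 \le j \le k$. Since the natural surjection $H^0(\PP^{n+1}, S^k \O^{\oplus(n+2)}) \to H^0(\PP^{n+1}, \O_{\PP^{n+1}}(k))$ is an isomorphism, a descending induction along this filtration proves $H^0(\PP^{n+1}, S^k(\Omega_{\PP^{n+1}}(1))) = 0$. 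The restriction to $X$ is then handled by the ideal sheaf sequence $0 \to S^k(\Omega_{\PP^{n+1}}(1))(-d) \to S^k(\Omega_{\PP^{n+1}}(1)) \to S^k(\Omega_{\PP^{n+1}}(1))|_X \to 0$, reducing the problem to showing $H^1(\PP^{n+1}, S^k \Omega_{\PP^{n+1}}(k-d)) = 0$; a further iteration of the same filtration argument, combined with Bott's formula, yields this vanishing, and this is where the hypothesis $d \ge 3$ first enters, providing enough negativity of the twists.

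For the second vanishing, applying the conormal sequence once more to $S^{k-1}(\Omega_{\PP^{n+1}}|_X)$ produces a filtration with graded pieces of the form $\O_X(-jd) \otimes S^{k-1-j}\Omega_X$. The resulting $H^1$ groups are then controlled by the inductive hypothesis applied to the lower symmetric powers $S^{k-1-j}\Omega_X$ (suitably twisted), combined with Bott vanishing on $\PP^{n+1}$ transferred to $X$ via the standard Koszul sequence $0 \to \O_{\PP^{n+1}}(\cdot - d) \to \O_{\PP^{n+1}}(\cdot) \to \O_X(\cdot) \to 0$. The induction starts at $k=1$, where a direct application of the conormal sequence together with Bott yields $H^0(X, \Omega_X(1))=0$ under the weaker hypothesis $d \ge 2$.

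The main obstacle will be the careful bookkeeping of the iterated filtrations coming from both the Euler and the conormal sequences, along with tracking precisely when Bott vanishing applies on $\PP^{n+1}$ and how much of it survives after restriction to $X$. The sharp hypothesis $d \ge 3$ enters at the step requiring the refined Bott-type vanishing $H^1(\PP^{n+1}, S^j \Omega_{\PP^{n+1}}(m))=0$ for $m \le j-1$, strictly stronger than what Schneider's boundary bound gives. That this hypothesis is essential is confirmed by the second fundamental form on a smooth quadric $Q \subset \PP^{n+1}$, which provides a nonzero section of $S^2\Omega_Q(2)$ and thus a counterexample precisely for $d=2$.
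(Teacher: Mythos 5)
The paper offers no proof of this statement --- it is quoted directly from Bogomolov--de Oliveira --- so your proposal has to be judged on its own merits. The architecture you choose (conormal sequence to pass from $\Omega_X$ to $\Omega_{\PP^{n+1}}|_X$, Euler sequence and Bott-type vanishing upstairs, restriction sequences to descend to $X$) is the standard and correct one, and your identification of the second fundamental form of the quadric as the obstruction at $d=2$ is exactly right. But two steps, as written, do not hold up. The asserted vanishing $H^1(\PP^{n+1},S^j\Omega_{\PP^{n+1}}(m))=0$ for $m\le j-1$ is false at the boundary $m=j-1$: already $H^1(\PP^{n+1},\Omega_{\PP^{n+1}})=\C$ (the case $j=1$, $m=0$), and more generally $H^1(\PP^{N},S^j\Omega_{\PP^{N}}(j-1))\cong S^{j-1}V^*\neq 0$, as one checks by Serre duality from the two-term resolution $0\to S^{j-1}V\otimes\sO(-1)\to S^jV\otimes\sO\to S^jT_{\PP^{N}}(-j)\to 0$. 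The correct statement is that $H^1(\PP^{N},S^j\Omega(m))\neq 0$ precisely for $j-1\le m\le 2j-2$, so the usable range is $m\le j-2$. This is not cosmetic: the term that genuinely forces $d\ge 3$ is $H^1(\PP^{n+1},S^{k-1}\Omega_{\PP^{n+1}}(k-d))$, which vanishes if and only if $k-d\le (k-1)-2$, i.e. $d\ge 3$; with your stated range one would conclude vanishing already for $d\ge 2$, contradicting your own quadric example. (The term $H^1(\PP^{n+1},S^k\Omega(k-d))$ that you single out as the first place $d\ge 3$ enters in fact only needs $d\ge 2$.)

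The second, more serious, gap is your treatment of $H^1(X,S^{k-1}(\Omega_{\PP^{n+1}}|_X)\otimes\sO_X(k-d))$: you filter by graded pieces $S^{k-1-j}\Omega_X(k-(j+1)d)$ and appeal to ``the inductive hypothesis'', but the induction you set up only yields an $H^0$-vanishing for the single twist $S^{k'}\Omega_X(k')$, and an $H^0$ statement cannot control the $H^1$ of these graded pieces; as written this step does not close. It is also unnecessary: apply instead the restriction sequence $0\to S^{k-1}\Omega_{\PP^{n+1}}(k-2d)\to S^{k-1}\Omega_{\PP^{n+1}}(k-d)\to S^{k-1}(\Omega_{\PP^{n+1}}|_X)(k-d)\to 0$ and use $H^1(\PP^{n+1},S^{k-1}\Omega(k-d))=0$ (valid exactly for $d\ge 3$, by the corrected range above) together with $H^2(\PP^{n+1},S^{k-1}\Omega(k-2d))=0$, which holds because $2\le N-1=n$ --- this is where $n\ge 2$ enters, consistent with the failure of the theorem for plane curves. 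With these two repairs your reduction works and no induction on $k$ is needed at all.
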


In particular, if $X\subset \PP^3$ is a smooth surface of degree $d$, using the natural duality $T_X\cong \Omega_X \otimes K_X^* \cong  \Omega_{X}(4-d)$, we have the following vanishing theorem:

\begin{corollary}
	Let $X\subset \PP^3$ be a smooth surface of degree $d$. If $d\geq 3$, then we have
	\[
	H^0(X,{\rm Sym}^k T_X\otimes \sO_X(m-4k+dk))=0.
	\]
	for any integers $m$, $k$ with $k\geq m$ and $k>0$, or equivalently, for any $k>0$, we have
	\[
	H^0(X,{\rm Sym}^k(T_X\otimes \sO_X(d-3)))=0.
	\]
\end{corollary}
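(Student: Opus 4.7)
The plan is to reduce both equivalent forms of the corollary to Theorem \ref{thm:Bogomolov-Oliveira-vanishing} via the rank-two duality available on a surface.

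First I would record the key identification $T_X\cong \Omega_X(4-d)$. Since $X\subset\PP^3$ is a smooth surface of degree $d$, adjunction gives $K_X\cong \sO_X(d-4)$; combined with the canonical isomorphism $E^*\cong E\otimes (\det E)^{-1}$ for a rank-two bundle applied to $E=\Omega_X$ (whose determinant is $K_X$), this yields $T_X\cong \Omega_X\otimes K_X^{-1}\cong \Omega_X(4-d)$.

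Next I would handle the second (compact) form. Since
\[
T_X\otimes \sO_X(d-3)\cong \Omega_X(4-d)\otimes \sO_X(d-3)\cong \Omega_X\otimes \sO_X(1),
\]
taking $k$-th symmetric powers gives $\Sym^k\bigl(T_X\otimes \sO_X(d-3)\bigr)\cong \Sym^k\Omega_X\otimes \sO_X(k)$. Theorem \ref{thm:Bogomolov-Oliveira-vanishing}, applicable because $\dim X = 2\geq 2$ and $d\geq 3$, then yields the required vanishing of $H^0$.

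For the first form, the extra flexibility in $m$ costs nothing: whenever $k\geq m$ and $k>0$, multiplication by any nonzero section of $\sO_X(k-m)$ (which exists because $\sO_X(1)$ is very ample on $X$, so we may restrict a homogeneous polynomial of degree $k-m$, or take the constant section if $k=m$) produces an injection of sheaves
\[
\Sym^k T_X\otimes \sO_X(m-4k+dk)\hookrightarrow \Sym^k T_X\otimes \sO_X(k(d-3))\cong \Sym^k\bigl(T_X\otimes \sO_X(d-3)\bigr),
\]
and hence an injection on global sections; the vanishing of the target already proven transfers to the source. I do not anticipate a genuine obstacle here: the corollary is essentially bookkeeping around Bogomolov--de Oliveira's theorem, with the only substantive input being the surface duality $T_X\cong \Omega_X\otimes K_X^{-1}$, which converts the twisted symmetric powers of $T_X$ on a degree-$d$ surface in $\PP^3$ into the twisted symmetric powers of $\Omega_X$ to which Theorem \ref{thm:Bogomolov-Oliveira-vanishing} applies directly.
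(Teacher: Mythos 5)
Your proof is correct. It follows the same basic strategy as the paper --- the rank-two duality $T_X\cong\Omega_X\otimes K_X^*\cong\Omega_X(4-d)$ reduces everything to vanishing statements for twisted symmetric differentials, and the two forms of the corollary are related by the subsheaf inclusion $\sO_X(m-4k+dk)\subset\sO_X(dk-3k)$ for $m\le k$ --- but you organize it differently and slightly more economically. The paper proves the first (general) form directly, invoking Theorem \ref{thm:Schneider's-vanishing} for the range $m<k$ (where $\Sym^k T_X\otimes\sO_X(m-4k+dk)\cong\Sym^k\Omega_X\otimes\sO_X(m)$ with $k\ge m+1$) and Theorem \ref{thm:Bogomolov-Oliveira-vanishing} for the boundary case $m=k$, and then records the equivalence with the compact form via the subsheaf observation. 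You instead prove the compact form first, using only Theorem \ref{thm:Bogomolov-Oliveira-vanishing} (since $T_X\otimes\sO_X(d-3)\cong\Omega_X(1)$), and then deduce the general form by the multiplication-by-a-section injection, which is legitimate because tensoring the injection $\sO_X\hookrightarrow\sO_X(k-m)$ with a locally free sheaf preserves injectivity and $H^0$ is left exact. This makes Schneider's theorem unnecessary for this corollary. Both arguments are complete; yours has the minor advantage of needing one fewer input theorem.
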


\begin{proof}
	The first statement is clear from Theorem \ref{thm:Schneider's-vanishing} and Theorem \ref{thm:Bogomolov-Oliveira-vanishing}. For the second statement, it is enough to note that for any integer $m<k$, the sheaf $\Sym^k T_X\otimes \sO_X(m-4k+dk)$ is a subsheaf of $\Sym^k T_X\otimes \sO_X(dk-3k)$.
\end{proof}

We aim, in this section, to show that the vanishing theorem above still holds for smooth hypersurfaces of higher dimension.

Let us first recall some basic facts about Schur functors, and we refer the reader to \cite[Lecture 4 -- 6]{FH91} for more details. A \emph{partition} of a positive integer $n$ is a sequence $\mu=(\mu_1,\mu_2,...,\mu_m)$ of non-negative integers in decreasing order:
\[
\mu_1\geq\mu_2\geq...\geq\mu_m\geq 0
\]
such that $\sum\limits_{i=1}^{m}\mu_i=n$. For each partition $\mu$ of $n$, we can associate $\mu$ with a Young diagram with $\mu_i$ boxes in the $i$-th row and we call such a Young diagram is of shape $\mu$. For example, the partition $\mu=(4,3,1)$ corresponds to the following Young diagram:
\begin{table}[h]
	\begin{tabular}{|p{2mm}|p{2mm}p{2mm}p{2mm}}
		\hline
		& \multicolumn{1}{l|}{} & \multicolumn{1}{l|}{} & \multicolumn{1}{l|}{} \\ \hline
		& \multicolumn{1}{l|}{} & \multicolumn{1}{l|}{} &                       \\ \cline{1-3}
		&                       &                       &                       \\ \cline{1-1}
	\end{tabular}
\end{table}

Inscribing the integers $1,2,..,n$ into the empty cells (in any order) of the Young diagram of shape $\mu$, we obtain a Young tableau $T$. Let $S_n$ be the symmetric group of degree $n$. We introduc the following two subgroups of $S_n$:
\begin{center}
	$P=\{g\in S_n\,\vert\,g$ preserves each row of $T\}$
\end{center}
and
\begin{center}
	$Q=\{g\in S_n\,\vert\,g$ preserves each column of $T\}$.
\end{center}
Denote by $\C[S_n]$ the group algebra of $S_n$ and let $e_g$ be the element in $\C[S_n]$ corresponding to $g$. Then we can define the following two elements in $\C[S_n]$:
\[
	a_\mu=\sum_{g\in P} e_g \qquad  {\rm and}\qquad   b_{\mu}=\sum_{g\in Q} {\rm sgn(g)} e_g.
\]

Let $V$ be a finite dimensional complex vector space. Note that the symmetric group $S_n$ acts on $V^{\otimes n}$, say on the right, by permuting the factors, so are $a_\mu$ and $b_\mu$. Set $c_\mu=a_\mu b_\mu\in \mathbb{C}[S_n]$, which is called a \emph{Young symmetrizer}.\\

\begin{definition}
	Let $\mu$ be a partition of $n$ and let $V$ be a finite dimensional complex vector space. Denote the image of $c_\mu$ on $V^{\otimes n}$ by $\mathbb{S}_\mu V$. The functor
	\[
	\mathbb{S}_\mu:V\mapsto \mathbb{S}_\mu V
	\]
	is called the Schur functor corresponding to $\mu$. A \emph{plethysm} is a composition of two Schur functors.
\end{definition}

\begin{remark}
	The definition of the Schur functor only depends on the partition $\mu$. By functoriality the definition of Schur functors carries over to vector bundles on projective varieties. For the computation of $\mathbb{S}_{\mu} V$, two cases are easy: for $\mu=(n)$, we have $\mathbb{S}_{\mu}V=\Sym^n V$; for $\mu=(1,...,1)$, we have $\mathbb{S}_{\mu}V=\wedge^n V$.
\end{remark}

\begin{lemma}
	\label{lemma:plethsym}
	Let $k$ be a positive integer and let $V$ be a complex vector space of dimension $n\geq 2$. For any integer $1\leq m\leq n$, the vector space $\mathbb{S}_{\mu(m)} V$ is a direct summand of $\Sym^k(\wedge^m V)$, where $\mu(m)$ is the partition $(k,\dots,k)$ with weight $mk$. In particular, for any positive integer $k$, we have
	\[
	{\rm Sym}^k(\wedge^{n-1}V)=\mathbb{S}_{\mu(n-1)}V.
	\]
\end{lemma}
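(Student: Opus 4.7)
The plan is to argue via highest weight theory for $GL(V)$. After fixing a basis $e_1, \dots, e_n$ of $V$ and the standard Borel subgroup, I will identify characters of the maximal torus with tuples $(\lambda_1, \dots, \lambda_n) \in \Z^n$. The element $v := e_1 \wedge \cdots \wedge e_m \in \wedge^m V$ is manifestly a highest weight vector of weight $(\underbrace{1, \dots, 1}_{m}, 0, \dots, 0)$, so its $k$-th symmetric power $v^k \in \Sym^k(\wedge^m V)$ will be a highest weight vector of weight $(\underbrace{k,\dots,k}_{m}, 0, \dots, 0) = \mu(m)$. The reason $v^k$ is still killed by the unipotent radical of the Borel is the Leibniz rule for the Lie algebra action on the symmetric algebra.

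The main step is then to invoke the standard fact that the $GL(V)$-subrepresentation $W \subset \Sym^k(\wedge^m V)$ generated by a highest weight vector of weight $\mu(m)$ is irreducible and isomorphic to $\mathbb{S}_{\mu(m)} V$; this is the classification of irreducible polynomial representations of $GL(V)$. Since $GL(V)$ is reductive, every finite-dimensional subrepresentation admits an invariant complement, so $W$ is a direct summand of $\Sym^k(\wedge^m V)$, which proves the first assertion.

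For the \emph{in particular} statement I will exploit the $GL(V)$-equivariant isomorphism $\wedge^{n-1} V \cong V^* \otimes \det V$, coming from the perfect pairing $\wedge^{n-1} V \otimes V \to \wedge^n V$. Taking symmetric powers then yields
\[
\Sym^k(\wedge^{n-1} V) \cong \Sym^k(V^*) \otimes (\det V)^{\otimes k}.
\]
Since $\Sym^k(V^*)$ is an irreducible $GL(V)$-representation and tensoring with the one-dimensional representation $(\det V)^{\otimes k}$ preserves irreducibility, the right-hand side is irreducible of highest weight $(\underbrace{k,\dots,k}_{n-1}, 0) = \mu(n-1)$. Combined with the first part this forces the equality $\Sym^k(\wedge^{n-1} V) = \mathbb{S}_{\mu(n-1)} V$.

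I do not anticipate any real obstacle: the entire argument reduces to standard machinery of highest weights for the reductive group $GL(V)$. The only slightly delicate point is verifying that $v^k$ remains a highest weight vector of the asserted weight, but this is automatic once one notes that the weight grading on $\wedge^m V$ extends multiplicatively to its symmetric algebra, and that the positive root vectors act as derivations and so annihilate $v^k$ once they annihilate $v$.
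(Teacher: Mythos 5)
Your proof is correct, but it follows a genuinely different route from the paper. The paper proceeds by induction on $m$ using the reduction lemma of Manivel--Micha\l{}ek (\cite[Lemma A.3]{ManivelMichalek2015}), which says that $\mathbb{S}_{\mu(m)}V$ occurs in $\Sym^k(\wedge^m V)$ if and only if $\mathbb{S}_{\mu(m-1)}V$ occurs in $\Sym^k(\wedge^{m-1}V)$, with the base case $\mathbb{S}_{\mu(1)}V\cong\Sym^k V$; the equality for $m=n-1$ is then obtained by comparing dimensions, computing $\dim \mathbb{S}_{\mu(n-1)}V=\binom{n+k-1}{n-1}$ from \cite[Theorem 6.3]{FH91}. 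You instead exhibit the explicit highest weight vector $v^k=(e_1\wedge\cdots\wedge e_m)^k$ of weight $\mu(m)$ --- correctly noting that the positive root vectors act as derivations, so they kill $v^k$ --- and invoke complete reducibility of $GL(V)$ in characteristic zero to get the direct summand; for $m=n-1$ the isomorphism $\wedge^{n-1}V\cong V^*\otimes\det V$ shows $\Sym^k(\wedge^{n-1}V)\cong\Sym^k(V^*)\otimes(\det V)^{\otimes k}$ is already irreducible of highest weight $(k,\dots,k,0)$, which gives the equality without any dimension count (indeed this last step does not even need the first part). Your argument is more self-contained, relying only on classical highest-weight theory rather than an external plethysm lemma, while the paper's inductive argument keeps the discussion entirely within the combinatorial framework of Schur functors it has just set up. One small point of rigour worth making explicit in your write-up: the cyclic submodule generated by a highest weight vector in a finite-dimensional representation of a reductive group in characteristic zero is irreducible because it is a highest weight module (hence has a unique maximal submodule) and is completely reducible; this is standard but is the hinge of your main step.
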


\begin{proof}
	By the reduction lemma \cite[Lemma A.3]{ManivelMichalek2015}, the vector space $\mathbb{S}_{\mu(m)}V$ appears as a direct summand of $\Sym^k(\wedge^m V)$ if and only if $\mathbb{S}_{\mu(m-1)} V$ appears as a direct summand of $\Sym^k(\wedge^{m-1}V)$. On the other hand, since $\mathbb{S}_{\mu(1)} V$ is isomorphic to $\Sym^k V$, it follows that $\mathbb{S}_{\mu(m)}V$ is a direct summand of $\Sym^k(\wedge^m V)$.
	
	Now we consider the case where $m=n-1$. As $\dim(\wedge^{n-1}V)=n$, we obtain
	\[
	\dim({\rm Sym}^k(\wedge^{n-1}V))=\binom{n+k-1}{n-1}.
	\]
	On the other hand, by \cite[Theorem 6.3]{FH91}, we have
	\[
	\dim(\mathbb{S}_{\mu(n-1)} V)=\prod_{i=1}^{n-1}\frac{k+n-i}{n-i}=\binom{n+k-1}{n-1}.
	\]
	This finishes the proof.
\end{proof}

Theorem \ref{THM:Vanishing-twisted-symmetric-power-hypersurfaces} is a direct consequence of Lemma \ref{lemma:plethsym} and a vanishing theorem for $T$-symmetric tensor forms due to Br\"uckmann and Rackwitz \cite{BrueckmannRackwitz1990}.

\begin{proof}[Proof of Theorem \ref{THM:Vanishing-twisted-symmetric-power-hypersurfaces}]
	We first note that there exists a natural isomorphism
	\[
	T_X\cong \wedge^{n-1}\Omega_X \otimes K_X^* \cong \wedge^{n-1}\Omega_X \otimes \sO_X(n+2-d).
	\]
	Thus, by Lemma \ref{THM:Vanishing-twisted-symmetric-power-hypersurfaces} above, for any positive integer $k$, we have
	\begin{align*}
		{\rm Sym}^k(T_X\otimes \sO_X(d-3))& \cong {\rm \Sym}^k(\wedge^{n-1}\Omega_X\otimes\sO_{X}(n-1))\\
		&\cong \mathbb{S}_{\mu(n-1)}\Omega_X\otimes \sO_{X}((n-1)k),
	\end{align*}
	where $\mu(n-1)$ is the partition $(k,\dots,k)$ with weight $(n-1)k$. As $d\geq 3$, thanks to \cite[Theorem 4 (iii)]{BrueckmannRackwitz1990}, we have $H^0(X,\mathbb{S}_{\mu(n-1)}\Omega_X\otimes\sO_X(p))=0$ for $p\leq (n-1)k$, which is exactly the desired vanishing theorem.
\end{proof}

\subsection{Smooth cubic hypersurfaces}

While Theorem \ref{THM:Vanishing-twisted-symmetric-power-hypersurfaces} is sufficient
to show that the tangent bundle of a hypersurface of degree $d$ is not pseudoeffective if $d \geq 4$, the case of cubics needs some additional arguments. The following result gives the Segre classes of hypersurfaces in projective spaces.

\begin{lemma} \label{lemmasegreclasses}
		For $n\geq 1$, let $X\subset \PP^{n+1}$ be a smooth hypersurface of degree $d$. Then for $1\leq l \leq n$, the Segre class $s_l(T_X)$ is given by
		\[
		(-1)^l \left(
		\binom{n+l+1}{l}-d\times \binom{n+l}{l-1}
		\right) H^l
		=
		(-1)^l\binom{n+l}{l-1}\left(\frac{n+1}{l}-d+1\right)H^l,
		\]
		where $H$ is the hyperplane class on $X$.
	\end{lemma}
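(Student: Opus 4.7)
The plan is to compute the total Segre class $s(T_X) = c(T_X)^{-1}$ in one shot using the conormal sequence, and then read off the coefficient of $H^l$.

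First I would write down the standard short exact sequence
\[
0 \lra T_X \lra T_{\PP^{n+1}}|_X \lra N_{X/\PP^{n+1}} \lra 0,
\]
together with the Euler sequence restricted to $X$, to deduce $c(T_{\PP^{n+1}}|_X) = (1+H)^{n+2}$ (viewed in the Chow ring $A^\bullet(X)$ modulo terms of degree $>n$). Since $N_{X/\PP^{n+1}} \cong \sO_X(d)$ has total Chern class $1+dH$, multiplicativity of $c$ in short exact sequences gives
\[
c(T_X) \;=\; \frac{(1+H)^{n+2}}{1+dH},
\qquad\text{hence}\qquad
s(T_X) \;=\; \frac{1+dH}{(1+H)^{n+2}}.
\]

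Next I would expand this rational expression by the generalised binomial series: for each $k\geq 0$,
\[
(1+H)^{-(n+2)} \;=\; \sum_{k\geq 0}(-1)^k \binom{n+1+k}{k} H^k.
\]
Multiplying by $(1+dH)$ and collecting the coefficient of $H^l$ yields
\[
s_l(T_X) \;=\; (-1)^l\binom{n+l+1}{l}H^l \;+\; d\,(-1)^{l-1}\binom{n+l}{l-1}H^l
\;=\; (-1)^l\!\left(\binom{n+l+1}{l}-d\binom{n+l}{l-1}\right)\!H^l,
\]
which is the first displayed formula.

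Finally, to obtain the factorised version, I would apply Pascal's rule $\binom{n+l+1}{l}=\binom{n+l}{l}+\binom{n+l}{l-1}$ and the elementary identity $\binom{n+l}{l}=\tfrac{n+1}{l}\binom{n+l}{l-1}$ (immediate from the factorial expressions), giving
\[
\binom{n+l+1}{l}-d\binom{n+l}{l-1} \;=\; \binom{n+l}{l-1}\!\left(\frac{n+1}{l}+1-d\right),
\]
as required. There is no serious obstacle here: the only thing to be careful about is the range $1\le l\le n$, which ensures that all intersection numbers land in the meaningful part of $A^\bullet(X)$ and that the formal division of power series in $H$ represents the genuine Segre class on $X$.
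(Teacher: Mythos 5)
Your proposal is correct and is essentially the paper's own argument: the paper computes $s(\Omega_X)=\frac{1-dH}{(1-H)^{n+2}}$ (citing Eisenbud--Harris for this expression, which comes from the same Euler and conormal sequences you invoke), expands the series, and then flips signs via $s_l(T_X)=(-1)^l s_l(\Omega_X)$, whereas you work with $T_X$ directly; the two computations differ only by this cosmetic dualisation. The binomial expansion and the final identity $\binom{n+l+1}{l}-d\binom{n+l}{l-1}=\binom{n+l}{l-1}\bigl(\frac{n+1}{l}-d+1\bigr)$ check out.
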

	
	\begin{proof}
		By \cite[Proposition 10.3]{EH16}, we have
		\[
		s(\Omega_X)=\frac{1}{c(\Omega_X)}=\frac{1-dH}{(1-H)^{n+2}}=(1-dH)(\sum_{j=0}^\infty H^j )^{n+2}.
		\]
		Thus the Segre class $s_l(\Omega_X)$ is equal to the term of degree $l$ in the right-hand side. Then a straightforward computation shows that the coefficient of $H^l$ is equal to
		\[
		\binom{n+l+1}{l}-d\times \binom{n+l}{l-1}=\binom{n+l}{l-1}\left(\frac{n+1}{l}-d+1\right).
		\]
		Finally the result follows from the fact that $s_l(T_X)=(-1)^l s_l(\Omega_X)$.
	\end{proof}
	
	\begin{proposition}
		\label{prop:cubic-not-modified-nef}
		Let $X\subset \PP^{n+1}$ be a smooth cubic hypersurface with $n\geq 3$. Then we have
		\begin{equation}
		\label{equation:cubic-intersection-number}
		\zeta^2\cdot(\zeta+\pi^*H)^{2n-3}=\frac{-3^2 \cdot 2^n}{8(2n-1)(n+1)}\binom{2n}{n}.
		\end{equation}
		In particular, by \eqref{inequalitymodifiednef}, the class $\zeta$ is not modified nef.
	\end{proposition}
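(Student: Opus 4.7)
The plan is to push the intersection number down to $X$ along $\pi:\PP(T_X)\to X$, recognize the result as an explicit coefficient in a generating series, and verify a combinatorial identity. First, I would expand $(\zeta+\pi^*H)^{2n-3}$ by the binomial theorem and apply $\pi_*$ term by term. The standard push-forward formula in the Grothendieck convention used in the paper is $\pi_*(\zeta^{n-1+l})=s_l(\Omega_X)$ (equivalently $(-1)^l s_l(T_X)$, cf.\ Lemma \ref{lemmasegreclasses}), together with $\pi_*(\zeta^j)=0$ for $j<n-1$. Only the indices $0\le k\le n$ survive, and the intersection rewrites as
\[
\zeta^2\cdot(\zeta+\pi^*H)^{2n-3} \;=\; \int_X\sum_{k=0}^n \binom{2n-3}{k}\,s_{n-k}(\Omega_X)\cdot H^k.
\]

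Using $H^n=d=3$ on the cubic and the Segre series $s(\Omega_X)=(1-3H)/(1-H)^{n+2}$ established in the proof of Lemma \ref{lemmasegreclasses}, the right-hand side is exactly $3\cdot [H^n]\!\bigl((1-3H)(1+H)^{2n-3}/(1-H)^{n+2}\bigr)$, where $[H^n]$ denotes coefficient extraction. The statement thus reduces to the scalar identity
\[
[H^n]\!\left(\frac{(1-3H)(1+H)^{2n-3}}{(1-H)^{n+2}}\right) \;=\; \frac{-3\cdot 2^n}{8(2n-1)(n+1)}\binom{2n}{n}.
\]
To evaluate the left-hand side I would substitute $1+H=(1-H)+2H$ into $(1+H)^{2n-3}$ and expand binomially, so that the quotient becomes $\sum_j\binom{2n-3}{j}\,2^j H^j (1-H)^{n-5-j}$. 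Only the five indices $j\in\{n-4,\ldots,n\}$ give a nonzero $[H^n]$-contribution, each with residue $\binom{4}{n-j}$; folding in the factor $(1-3H)$ then produces a weighted sum of the five binomials $\binom{2n-3}{n-l}$ with $l=0,\ldots,4$.

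The main obstacle is the resulting combinatorial identity. Each $\binom{2n-3}{n-l}$ can be written as an elementary rational multiple of $\binom{2n}{n}/\bigl((2n-1)(n+1)\bigr)$, and after substitution the numerator of the total sum is a polynomial in $n$ of apparent degree two whose $n^2$- and $n$-coefficients both vanish, leaving only the constant $-6$. This ``small miracle'' of cancellation is the crux of the computation; once verified, the announced closed form drops out. The final assertion that $\zeta$ is not modified nef is then immediate from \eqref{inequalitymodifiednef}: since $\zeta+\pi^*H$ is nef by Lemma \ref{lemma:freeness-twist-tangent-bundle} and the computed intersection number is strictly negative for all $n\ge 3$, modified nefness of $\zeta$ would contradict the inequality $\zeta^2\cdot(\zeta+\pi^*H)^{2n-3}\ge 0$.
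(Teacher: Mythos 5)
Your proposal is correct and its first half coincides with the paper's: both expand $(\zeta+\pi^*H)^{2n-3}$ binomially, push forward along $\pi$ using $\pi_*(\zeta^{n-1+l})=s_l(\Omega_X)=(-1)^ls_l(T_X)$, and thereby reduce the intersection number to a purely combinatorial identity. Where you diverge is in how that identity is verified. The paper substitutes the explicit coefficients of the Segre classes from Lemma \ref{lemmasegreclasses}, splits the resulting expression into the two sums \eqref{equation:cubic-intersection-number-positive-part} and \eqref{equation:cubic-intersection-number-negative-part}, rewrites each summand as a quartic polynomial in $i$ divided by $i!(n-i)!$ times $\binom{2n}{n}$, and then evaluates via the closed forms for $A(k,n)=\sum_i i^k/(i!(n-i)!)$ established by recursion in Lemma \ref{lemma:combinatorial-identities}. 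You instead keep the Segre series in generating-function form, recognise the answer as $3\,[H^n]\bigl((1-3H)(1+H)^{2n-3}/(1-H)^{n+2}\bigr)$, and collapse the computation via the substitution $1+H=(1-H)+2H$, which leaves only the five residues $\binom{4}{l}$ (resp.\ $\binom{3}{l}$ after multiplying by $-3H$) and hence a five-term weighted sum of $\binom{2n-3}{n-l}$, $l=0,\dots,4$; I checked this reproduces $-3$ for $n=3$ and $-12$ for $n=4$, in agreement with \eqref{equation:cubic-intersection-number}. Your route is shorter and arguably more transparent about why the closed form involves $\binom{2n}{n}/\bigl((2n-1)(n+1)\bigr)$, at the cost of a little care with the boundary indices (for $n=3$ the term $j=n-4$ is vacuous since $\binom{2n-3}{-1}=0$) and with the exact normalisation of the surviving constant; the paper's route is more mechanical but entirely self-contained through Lemma \ref{lemma:combinatorial-identities}. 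Your concluding step — combining the negative intersection number with the nefness of $\zeta+\pi^*H$ (the corollary to Lemma \ref{lemma:freeness-twist-tangent-bundle}) and inequality \eqref{inequalitymodifiednef} — is exactly the paper's.
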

	
	\begin{proof}
		Note that we have
		$$
			\zeta^2\cdot(\zeta+\pi^*H)^{2n-3} = \sum_{i=0}^n \binom{2n-3}{i}(\pi^*H)^i\cdot \zeta^{2n-i-1}.
		$$
		Since $(\pi^*H)^i\cdot \zeta^{2n-i-1}=(-1)^{n-i} s_{n-i}(T_X)$\footnote{Recall that we projectivise in the sense of Grothendieck.}, we obtain by Lemma \ref{lemmasegreclasses}
		that
		\begin{align*}
				\frac{1}{H^n}	\cdot  \zeta^2\cdot(\zeta+\pi^*H)^{2n-3}
			& = \binom{2n-3}{n} + \sum_{i=0}^{n-1}\binom{2n-3}{i}\left(\binom{2n-i+1}{n-i}-3\binom{2n-i}{n-i-1}\right)\\
			& = \sum_{i=0}^n \binom{2n-3}{i}\binom{2n-i+1}{n-i}-3\sum_{i=0}^{n-1}\binom{2n-3}{i}\binom{2n-i}{n-i-1}
		\end{align*}
		Thus the result follows immediately from the following claim:
		
		{\bf Claim.} Let $n$ be a positive integer $\geq 3$. Then we have
		\begin{equation}
		\label{equation:cubic-intersection-number-positive-part}
		\sum_{i=0}^n \binom{2n-3}{i}\binom{2n-i+1}{n-i} = \frac{3(27 n^2 + 9n - 14) 2^n}{64 (2n-1)(n+1)} \binom{2n}{n}
		\end{equation}
		and
		\begin{equation}
		\label{equation:cubic-intersection-number-negative-part}
		\sum_{i=0}^{n-1} \binom{2n-3}{i}\binom{2n-i}{n-i-1} = \frac{3(3n+2)(3n-1)2^n}{64(2n-1)(n+1)}\binom{2n}{n}.
		\end{equation}

	\textit{Proof of the claim.} We give a complete proof of \eqref{equation:cubic-intersection-number-positive-part} and the same argument can be easily modified to prove \eqref{equation:cubic-intersection-number-negative-part}, so we leave the details to interested reader.
		
		For $n=3$ and $4$, a straightforward computation shows that the values on both sides of \eqref{equation:cubic-intersection-number-positive-part} are $96$ and $681$, respectively. Thus we may assume that $n\geq 5$.
		
		For $0\leq i \leq n$, we claim that the following identity holds:
		\begin{equation*}
		\binom{2n-3}{i}\binom{2n-i+1}{n-i}=\frac{n!(2n-i+1)(2n-i)(2n-i-1)(2n-i-2)}{i! (n-i)! 2n(2n-1)(2n-2)(n+1)}\binom{2n}{n}.
		\end{equation*}
		In fact, for $i=0$ and $n$, it follows from the definition. For $1\leq i\leq n-1$, note that we have
		\begin{align*}
		\binom{2n-3}{i}\binom{2n-i+1}{n-i} & = \frac{(2n-i-2)\cdots(2n-3)}{i!}\cdot \frac{(n+2)\cdots(2n-i+1)}{(n-i)!}\\
		& = (n+2)\cdots(2n-3)\cdot \frac{(2n-i+1)\cdots (2n-i-2)}{i! (n-i)!}\\
		& = \binom{2n}{n}\frac{n!(2n-i+1)(2n-i)(2n-i-1)(2n-i-2)}{(n+1)2n(2n-1)(2n-2) i! (n-i)!}.
		\end{align*}
		Therefore, to prove \eqref{equation:cubic-intersection-number-positive-part}, it suffices to prove that the following identity holds:
		\begin{equation}
		\label{equation:simplified-positive-part}
		\sum_{i=0}^n \frac{n!(2n-i+1)(2n-i)(2n-i-1)(2n-i-2)}{n(n-1)}=\frac{3(27n^2+9n-14)2^n}{16}.
		\end{equation}
		On the other hand, since
		\begin{multline*}
		(2n-i-2)(2n-i-1)2n(2n-i+1) = i^4 - (8n-2)i^3 + (24 n^2 - 12n + 1)i^2 \\
		- (32n^3 - 24n^2 - 4n + 2)i +  (16n^4- 16n^3 - 4n^2 +4n),
		\end{multline*}
		we can apply the formula given in Lemma \ref{lemma:combinatorial-identities} below to check \eqref{equation:simplified-positive-part} directly.
	\end{proof}

\begin{lemma}
	\label{lemma:combinatorial-identities}
	Let $n\in \Z_{>0}$ be a positive integer and let $k\in \Z_{\geq 0}$ be a non-negative integer. We define
	\[
	A(k,n) = \sum_{i=0}^n \frac{i^k}{i! (n-i)!}.
	\]
	Then
	\[
	A(k,n)=
	\begin{dcases}
	\frac{2^n}{n!} & k=0;\\
	\frac{n}{2}\cdot \frac{2^n}{n!} & k=1;\\
	\frac{n(n+1)}{4}\cdot \frac{2^n}{n!} & k=2;\\
	\frac{n^2(n+3)}{8}\cdot \frac{2^n}{n!} & k=3;\\
	\frac{n(n+1)(n^2+5n-2)}{16}\cdot \frac{2^n}{n!} & k=4.
	\end{dcases}
	\]
\end{lemma}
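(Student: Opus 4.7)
The plan is to reduce all five identities to the basic binomial-type identity
\[
\sum_{i=0}^n \frac{x^i}{i!(n-i)!} \;=\; \frac{(1+x)^n}{n!},
\]
obtained by dividing both sides of $(1+x)^n=\sum_i\binom{n}{i}x^i$ by $n!$. Setting $x=1$ immediately handles the case $k=0$, since the sum becomes exactly $A(0,n)=2^n/n!$.

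For $k\geq 1$, I observe that the Euler operator $\theta:=x\,d/dx$ multiplies every monomial $x^i$ by $i$, so that
\[
A(k,n) \;=\; \left.\theta^{k}\!\left(\frac{(1+x)^n}{n!}\right)\right|_{x=1}.
\]
Computing $\theta^{k}(1+x)^n$ by repeated application of the product rule produces a finite sum of terms of the form $c_{k,j}\,x^{j}(1+x)^{n-j}$ with explicit integer coefficients; evaluating at $x=1$ turns each such term into $c_{k,j}\,n(n-1)\cdots(n-j+1)\,2^{n-j}$. For $k=1,2,3,4$ the number of terms is at most five, and a short computation reproduces exactly the factorisations in the statement.

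An equivalent bookkeeping that avoids the operator calculus altogether is the one-step recursion
\[
A(k,n) \;=\; \sum_{l=0}^{k-1}\binom{k-1}{l}\, A(l,n-1),
\]
obtained by writing $i^k=i\cdot i^{k-1}$, cancelling the leading $i$ against $i!$, reindexing $j=i-1$, and expanding $(j+1)^{k-1}$ by the binomial theorem. Combined with $A(0,n)=2^n/n!$, this yields $A(1,n),\dots,A(4,n)$ in four short inductive steps, and the answers come out automatically in the factorised shape required. The only mildly annoying part of the whole argument is the polynomial simplification needed for $k=4$, where one must recognise $n^{4}+6n^{3}+3n^{2}-2n=n(n+1)(n^{2}+5n-2)$; there is no conceptual obstacle at any stage.
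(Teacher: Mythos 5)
Your argument is correct, and both of your suggested implementations check out: the recursion $A(k,n)=\sum_{l=0}^{k-1}\binom{k-1}{l}A(l,n-1)$ does follow from cancelling $i$ into $i!$ and shifting the index, and together with $A(0,n)=2^n/n!$ it reproduces all five closed forms (I verified $k=4$: the bracket evaluates to $(n^3+6n^2+3n-2)/8=(n+1)(n^2+5n-2)/8$). The paper proceeds in the same elementary inductive spirit but uses a different key recursion, namely
\[
A(k,n)=nA(k-1,n)-A(k-1,n-1),
\]
obtained by writing $i=n-(n-i)$ and absorbing the factor $n-i$ into $(n-i)!$; this has the advantage of involving only two previously computed quantities at each step (at the cost of tracking two values of $n$), whereas your recursion descends to $n-1$ but requires all $A(l,n-1)$ with $l<k$, and so produces the answer as a sum of $k$ terms. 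Your generating-function variant via the Euler operator $\theta=x\,d/dx$ applied to $(1+x)^n/n!$ is a genuinely different (and arguably more conceptual) route not present in the paper: it explains a priori why $A(k,n)\cdot n!/2^n$ is a polynomial in $n$ of degree $k$, since $\theta^k(1+x)^n=\sum_j S(k,j)\,n(n-1)\cdots(n-j+1)\,x^j(1+x)^{n-j}$ with Stirling-number coefficients. Either of your two routes is a complete and valid replacement for the paper's proof.
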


\begin{proof}
	For $n=1$, one can easily check the result by a straightforward computation. Moreover, for $k=0$, it follows from the following simple observation
	\begin{align*}
	2^n=\sum_{i=0}^n \binom{n}{i}=n!\sum_{i=0}^n \frac{1}{i! (n-i)!}.
	\end{align*}
	On the other hand, by the definition of $A(k,n)$, for any pair $(k,n)$ with $k$ positive, we note that the following relation holds:
	\[
	A(k,n)=\left(n\sum_{i=0}\frac{i^{k-1}}{i! (n-i)!}\right)-\left(\sum_{i=0}^{n-1}\frac{(n-i) i^{k-1}}{i! (n-i)!}\right) = n A(k-1,n)-A(k-1,n-1).
	\]
	Then we can conclude by an easy inductive argument.
\end{proof}

We can finally conclude the case of hypersurfaces:

\begin{proof}[Proof of Theorem \ref{thm:Tangent-hypersurface-not-pseudo-effective}]
    For $d\leq 2$, it is well known that $X$ is a homogeneous space. In particular, $T_X$ is actually globally generated. Thus it is sufficient to show that $T_X$ is not pseudoeffective if $d\geq 3$.

	For $d\geq 4$, it follows directly from Theorem \ref{THM:Vanishing-twisted-symmetric-power-hypersurfaces} and Lemma \ref{lemmacharacterisepseff}.
	For cubic surfaces the statement is Proposition \ref{proposition-degree-3} in the previous section.
	For $d=3$ and $n\geq 3$, note first that by Theorem \ref{THM:Vanishing-twisted-symmetric-power-hypersurfaces} and Lemma \ref{lemmasubstractbig}, the tangent bundle is not big. In particular, if $T_X$ is pseudoeffective, by Corollary \ref{corollaryextremalclass}, either $\zeta$ is modified nef or there exists a positive integer $m$ such that $m\zeta\sim D$ for an effective integral Weil divisor $D$. The former case is excluded by Proposition \ref{prop:cubic-not-modified-nef}. In the latter case, we must have $H^0(X,\Sym^m T_X)\not=0$, which contradicts Theorem \ref{THM:Vanishing-twisted-symmetric-power-hypersurfaces} again.
\end{proof}

\section{Del Pezzo threefolds}

\subsection{Total dual VMRT of del Pezzo threefolds}

Let $X$ be a del Pezzo threefold, i.e. $X$ is a smooth projective threefold such
that $-K_X =2 H$ where $H$ is an ample Cartier divisor on $X$. We denote by
$$
d := H^3
$$
the degree of the del Pezzo threefold. We also assume that $\pic(X) \cong \Z H$ (see Subsection \ref{subsectionproofdelPezzothreefold} for the other cases), so
by \cite[Thm.3.3.1]{EncV} one has $d \leq 5$.

A line on $X$ is a rational curve $l \subset X$ such that $H \cdot l=1$.
By \cite[Cor.3.1.11]{EncV} a general element $D \in |H|$ is smooth, by adjunction it is a del Pezzo surface of degree $d$. A $(-1)$-curve in $D$ satisfies $H \cdot l = -K_D \cdot l=1$, so it is a line. Since $D$ contains a $(-1)$-curve, we see that $X$ contains a line.

If $d \geq 2$, the linear system $|H|$ is basepoint free and $2H$ is very ample (see for instance \cite[Theorem 2.4.5]{EncV}) and every line is a smooth rational curve by \cite[Lemma 2.1.1]{KuznetsovProkhorovShramov2018}.

If $d=1$, the linear system $|H|$ has a unique base point $p$, and by \cite[Proposition 3.2]{Tik81} there exist
lines that are not smooth. The geometry of these lines can be understood as follows: let $x \in l_{\sing}$ be a singular point of the line. Then $x \neq p$: otherwise we can find a divisor $D \in |H|$ that does not contain $l$,
so $D \cdot l=H \cdot l=1$ gives a contradiction. Thus
we have $H^0(X, \sI_x \otimes \sO_X(H))=2$, and $H \cdot l=1$ implies that $l$ is contained in the base locus
of the pencil $|\sI_x \otimes \sO_X(H)|$. Since $H^3=1$
we obtain that $l$ coincides with the base locus, in particular it is a complete intersection of two fundamental divisors $D_1, D_2 \in |\sI_x \otimes \sO_X(H)|$.
By adjunction we see that $\omega_l \simeq \sO_l$,
so $l$ is isomorphic to a singular plane cubic (see also \cite[Remark 2.1.3]{KuznetsovProkhorovShramov2018}).

Let $\Sigma(X)$ be the Hilbert scheme of lines of $X$. Let $\Sigma_0$ be an irreducible component of $\Sigma(X)$ and consider the reduced scheme structure on $\Sigma_0$. Since $-K_X \cdot l=2$ we know by \cite[II,Thm.1.15]{Ko96} that $\Sigma_0$ has dimension at least two. Restricting to $\Sigma_0$ the universal family of lines, we obtain a diagram:
\[
\xymatrix{
   \mathcal{L}_0(X) \ar[d]_{q}\ar[r]^{e}  &  X\\
   \Sigma_0               &
}
\]

\begin{lemma} \label{lemmalinesfree}
	Let $X$ be a smooth del Pezzo threefold with $\rho(X)=1$. Then every irreducible component of $\Sigma(X)$ has dimension two and its general point corresponds to a free rational curve. In particular, every irreducible component of $\Sigma(X)$ is generically smooth. Moreover, the map $e\colon \mathcal{L}_0(X)\rightarrow X$ is surjective and generically finite.
\end{lemma}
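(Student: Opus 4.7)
My plan is to organise the argument in three stages: a deformation-theoretic lower bound on $\dim_{[l]}\Sigma(X)$, generic finiteness of $e$ via Mori's bend-and-break lemma, and freeness of the generic line via Koll\'ar's dominating-family theorem.

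For the first stage I would note that every line $l\subset X$ is a local complete intersection in the smooth threefold $X$: it is smooth for $d\geq 2$, and a plane cubic (still lci) for $d=1$ by the discussion preceding the lemma. Since $-K_X\cdot l=2$, the standard Hilbert-scheme deformation bound \cite[II,Thm.1.15]{Ko96} yields $\dim_{[l]}\Sigma(X)\geq 2$; hence every irreducible component $\Sigma_0\subset\Sigma(X)$ has $\dim\Sigma_0\geq 2$, and so $\dim\mathcal{L}_0(X)\geq 3$.

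For the second stage I would argue that $e\colon\mathcal{L}_0(X)\to X$ is generically finite onto its image. If not, then for a general $x\in e(\mathcal{L}_0(X))$ the fibre $e^{-1}(x)$ would be positive-dimensional, so, using the projectivity of the Hilbert-scheme component $\Sigma_0$, one could find a complete irreducible curve $\bar T\subset \Sigma_0$ parametrising a non-trivial family of lines all containing $x$. Mori's bend-and-break lemma \cite[II,Thm.5.4]{Ko96} would then produce a point $[l']\in\bar T$ whose underlying $1$-cycle $\sum m_iC_i$ is reducible or non-reduced, still passes through $x$, and has class $[l]$; in particular $\sum m_i(H\cdot C_i)=H\cdot l=1$. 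But $H$ is ample with $\rho(X)=1$, so $H\cdot C_i\geq 1$ for every irreducible component, forcing the cycle to be irreducible and reduced, a contradiction. Therefore $e$ is generically finite, and combined with $\dim\mathcal{L}_0(X)\geq 3=\dim X$ this yields both surjectivity of $e$ and $\dim\Sigma_0=2$.

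For the third stage, the dominance of $e$ together with Koll\'ar's theorem on general members of dominating families \cite[II,Thm.3.10]{Ko96} shows that for general $[l]\in\Sigma_0$ the normalisation $f\colon\PP^1\to l$ is a free morphism, so $f^*T_X\cong\sO_{\PP^1}(2)\oplus\sO_{\PP^1}^{\oplus 2}$ and hence $N_{l/X}\cong\sO_{\PP^1}^{\oplus 2}$; in particular $h^1(l,N_{l/X})=0$, so $\Sigma_0$ is smooth of dimension two at $[l]$, giving generic smoothness. The delicate point, I expect, will be applying bend-and-break so as to extract cleanly a reducible or non-reduced limiting $1$-cycle of the correct class from the abstract complete family $\bar T\subset\Sigma_0$; once this is in place, minimality of lines $(H\cdot l=1)$ forces the contradiction and the remaining implications are routine Mori-theoretic reasoning for Fano threefolds.
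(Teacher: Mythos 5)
The fatal problem is in your second stage. Bend-and-break with a \emph{single} fixed point does not force the limiting $1$-cycle to become reducible or non-reduced; that conclusion belongs to the second bend-and-break lemma, which requires the family to fix \emph{two} points of $X$ (see the bend-and-break statements in \cite[II.5]{Ko96}). With one point fixed, the correct assertion is only a rigidity statement about morphisms $\PP^1\times B\to X$ contracting a section, and when one completes a non-constant family in the Hilbert or Chow scheme and resolves, the components of a degenerate fibre that are contracted by the evaluation map disappear from the image cycle --- so for curves of $H$-degree one the limit cycle remains integral and no contradiction arises. Note that your Stage 2 uses nothing about $X$ beyond $\rho(X)=1$, $H$ ample and $H\cdot l=1$: run verbatim on $X=\PP^3$ with its family of lines, it would ``prove'' that the evaluation map is generically finite, whereas $\dim\G(1,3)=4$ and through every point there is a complete $2$-dimensional family of lines, none of which breaks. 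Hence the step ``positive-dimensional fibre of $e$ $\Rightarrow$ breaking $\Rightarrow$ contradiction'' is false, and with it the surjectivity of $e$ and the equality $\dim\Sigma_0=2$ are unproved. (Your Stage 1 is correct, and, \emph{granting} dominance of $e$, your Stage 3 is essentially what the paper does.)

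What is missing is precisely the hard content of the lemma: excluding a component $\Sigma_0$ all of whose lines sweep out only a surface $S\subset X$, through each point of which there may perfectly well be a one-dimensional family of degree-one curves (as for a plane in $\PP^3$). The paper disposes of $d\geq 2$ by quoting \cite[Prop.3.3.5]{EncV} and \cite[Lemma 2.2.6]{KuznetsovProkhorovShramov2018}, and for $d=1$ argues as follows: if the general line of $\Sigma_0$ is not free then $e$ is not dominant \cite[Prop.4.14]{Deb01}, so $S=e(\mathcal{L}_0(X))$ is a surface carrying a $\geq 2$-dimensional family of rational curves of $H$-degree one; by Kebekus' classification \cite[Thm.3.5, Thm.3.6]{Kebekus2002} its normalisation is $\PP^2$ with $n^*H\cong\sO_{\PP^2}(1)$, whence $H^2\cdot S=1$ and $S\in\vert H\vert$; finally the description of $X$ as a double cover of the Veronese cone shows that $S$ would induce a degree-two map $\PP^2\rightarrow\PP(1,1,2)$ with $(\mu')^*\sO_{\PP(1,1,2)}(2)\cong\sO_{\PP^2}(2)$, contradicting the fact that the projection of the Veronese surface from an external point is birational onto a quartic surface in $\PP^4$. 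Some replacement for this geometric input (Kebekus' theorem plus the double-cover structure) is indispensable; an intersection-theoretic argument of bend-and-break type alone cannot see it.
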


The case $d \geq 3$ is well-known \cite[Prop.3.3.5]{EncV}, the case
$d=2$ is shown in \cite[Lemma 2.2.6]{KuznetsovProkhorovShramov2018}.

\begin{proof}
We can assume that $d=1$, we will follow the arguments of 
\cite{KuznetsovProkhorovShramov2018}.
Let $\Sigma_0$ be an irreducible component of $\Sigma(X)$. Assume that a line corresponding to a very general point of $\Sigma_0$ is not free, 
in particular $e$ is not surjective \cite[Prop.4.14]{Deb01}.
Since $k :=\dim(\Sigma_0)$ is at least two, we see that
$S=e(\mathcal{L}_0(X))$ is a surface.
Let $n\colon S'\rightarrow S$ be the normalisation. Then $\dim(\Sigma(S'))\geq 2$, where $\Sigma(S')$ is the Hilbert scheme of lines on $S'$ with respect to $n^*H$. Then we obtain $S\cong \PP^2$ by \cite[Thm.3.5, Thm.3.6]{Kebekus2002}. As a consequence, $n^*H$ is a hyperplane section of $\PP^2$; that is, $H^2\cdot S=1$. Thus $S$ is an element in $\vert H\vert$. Let $\mu\colon X\rightarrow \PP(1,1,1,2)$ be the double cover defined by $\vert 2H\vert$ (see Section \ref{Section:del-Pezzo-degree-one}). Then $\mu(S)\cong \PP(1,1,2)$ and $S\rightarrow \mu(S)$ is a double cover. In particular, the induced morphism $\mu'\colon \PP^2\rightarrow \PP(1,1,2)$ is a double cover with $(\mu')^*\sO_{\PP(1,1,2)}(2)\cong \sO_{\PP^2}(2)$. Let $i\colon \PP(1,1,2)\rightarrow \PP^4$ be the inclusion defined by $\vert \sO_{\PP(1,1,2)}(2)\vert$. Then we have the following commutative diagram:
	\[
	\xymatrix{
	    \PP^2\ar[r]^{\Phi}\ar[d]_{\mu'}&  \PP^5\ar@{-->}[d]^{\pi}\\
	  \PP(1,1,2)\ar[r]_{\quad i}  &  \PP^4
	}
	\]
	where $\Phi\colon \PP^2\rightarrow\PP^5$ is the Veronese embedding and $\pi$ is a projection from a point $p\in \PP^5\setminus \Phi(\PP^2)$. Denote by $V$ the image $\Phi(\PP^2)$. Then the restriction $\pi\vert_{V}\colon V\rightarrow \pi(V)$ is birational and $\pi(V)\subset \PP^4$ is a surface of degree four (see \cite[p.\,366, 10.5.5]{BeltramettiCarlettiGallaratiMontiBragadin2009} for details). Nevertheless, as $\mu'$ is of degree two by the construction, we obtain a contraction. Hence, a general line in $\Sigma_{0}$ is free. As a consequence, $\Sigma_{0}$ is two-dimensional and generically smooth (see \cite[Corollary 2.1.6]{KuznetsovProkhorovShramov2018}). 
\end{proof}

 By Lemma \ref{lemmalinesfree} every line on $X$ belongs to a an unsplit family $\mathcal{K}\subset \Chow{X}$ of minimal rational curves. Denote by $F(X)$ the closed subset of the Chow variety parametrising lines. By what precedes we know that $F(X)$ has pure dimension two, moreover $F(X)$ is irreducible if $3 \leq d \leq 5$ \cite[Proposition 3.5.6 and 3.5.8]{EncV}
or $1 \leq d \leq 2$ and $X$ is general in its deformation family
(\cite{Tik81} for $d=1$ and \cite{Tik80, Wel81} for $d=2$).

Let $\sK \subset F(X)$ be an irreducible component.
Let $\holom{q}{\sU}{\sK}$ be the normalisation of the universal family, and denote by
$\holom{e}{\sU}{X}$ the evaluation morphism. The evaluation morphism is generically finite, and we set
$$
k := \deg(e).
$$

Let $D \in |H|$ be a general divisor. We set
\begin{equation} \label{definer}
  r :=
\# \{
[l] \in \sK \ | \ l \subset D
\}.
\end{equation}

\begin{remark} \label{remarkidentifylines}
Let us recall that every $(-1)$-curves in $D$ is a line.
Moreover if $d \geq 2$, then, since $-K_D \cdot l=1$ and $l$ is smooth \cite[Lemma 2.1.1]{KuznetsovProkhorovShramov2018}, every line $l \subset D$ is a $(-1)$-curve.
\end{remark}

We come to the key lemma of this section:

\begin{lemma} \label{lemmaclassdelpezzo}
	Let $X$ be a smooth del Pezzo threefold of degree $d$ such that $\pic(X) \cong \Z H$.
	Denote by $\check{\mathcal{C}}$ the total dual VMRT associated to the unsplit
	family of rational curves $\sK$. Then one has
	\[
	[\check{\mathcal{C}}]=k \zeta+\pi^* \left(
	\frac{r}{d}-k
	\right)
	H.
	\]
\end{lemma}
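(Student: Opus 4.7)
I would apply Proposition \ref{propositionclassVMRT} to the family $\sK$. Its hypotheses hold: since $-K_X\cdot l=2$ and $\dim X=3$, a free line on $X$ has splitting type $\sO_{\PP^1}(2)\oplus \sO_{\PP^1}^{\oplus 2}$, giving $p=0$; and since $\sK$ is unsplit we have $\overline{\sU}=\sU$, $\overline{\sK}=\sK$, and $\sG$ is locally free in codimension one by Remark \ref{remarkVMRTunsplit}. The proposition then reads
\[
[\check{\mathcal C}]=k\,\zeta-\pi^*\, e_*c_1(T_{\sU/\sK}),
\]
so the whole statement reduces to showing $e_*c_1(T_{\sU/\sK})=(k-r/d)H$. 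Since $\pic(X)\cong\Z H$, I can write $e_*c_1(T_{\sU/\sK})=aH$; by the projection formula, $a$ is determined by $ad=c_1(T_{\sU/\sK})\cdot e^*H\cdot e^*H$, and it remains to evaluate this triple intersection.

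The idea is to compute it on the surface $S:=e^{-1}(D)\subset \sU$ for a general $D\in|H|$. The key structural observation is that $\sigma:=q|_S\colon S\to \sK$ is birational and contracts precisely the $r$ disjoint fibres $E_i:=q^{-1}([l_i])$ lying over the points $[l_i]\in\sK$ parametrising the $r$ lines $l_i\subset D$: a line $l'\not\subset D$ meets $D$ in a single point and so contributes a single point to $S$ over $[l']$, while a line contained in $D$ contributes the whole fibre. For $D$ sufficiently general, $S$ is smooth, the points $[l_i]$ lie in the smooth locus of $\sK$, and $\sigma$ is the blow-up of $\sK$ at those $r$ points, so $K_S=\sigma^*K_\sK+\sum_{i=1}^r E_i$. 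Meanwhile, $e|_S\colon S\to D$ is finite of degree $k$.

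Combining adjunction on the Cartier divisor $[S]=e^*H$ (which gives $K_\sU|_S=K_S-(e|_S)^*H$) with the blow-up relation produces
\[
c_1(T_{\sU/\sK})|_S=-K_\sU|_S+\sigma^*K_\sK=(e|_S)^*H-\sum_{i=1}^r E_i.
\]
Intersecting with $(e|_S)^*H$ yields $ad=kd-r$: the first term uses $(e|_S)^*H\cdot (e|_S)^*H=k\cdot (H|_D)^2=kd$, and the second uses that $e|_{E_i}\colon E_i\to l_i$ is birational with $H\cdot l_i=1$, so $E_i\cdot (e|_S)^*H=1$ for each $i$. This gives $a=k-r/d$, i.e.\ the desired formula.

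The main technical obstacle is rigorously justifying the blow-up description of $\sigma$. One needs smoothness of $S$ (a Bertini-type argument on the smooth locus of $\sU$) and smoothness of $\sK$ at each $[l_i]$. The latter is delicate when $d=1$, where singular lines exist; one should check that for $D$ general the lines it contains lie in the free locus provided by Lemma \ref{lemmalinesfree}, so that $\sK$ is smooth at each $[l_i]$ and the computation above goes through unchanged.
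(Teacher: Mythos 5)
Your argument is correct and shares its skeleton with the paper's proof: both reduce via Proposition \ref{propositionclassVMRT} and Remark \ref{remarkVMRTunsplit} to computing $e_* c_1(T_{\sU/\sK})$, and both extract the coefficient by pairing with $H^2$, i.e. by restricting to the preimage $S=e^{-1}(D)$ of a general $D\in|H|$ and using that $S\to\sK$ is birational with exactly $r$ exceptional fibres over the lines contained in $D$. The difference lies in the endgame. The paper never invokes a blow-up description: it notes that $e^*\sO_X(H)$ has degree one on the $q$-fibres, so $\sU\cong\PP(V)$ with $V=q_*e^*\sO_X(H)$ and $e^*H$ the tautological class; then $(e^*H)^3=c_1^2(V)-c_2(V)=kd$, the divisor $S$ viewed as the zero locus of a section $s\in H^0(\sK,V)$ gives $c_2(V)=r$, and the relative Euler sequence converts $c_1(V)$ into $c_1(T_{\sU/\sK})$. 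That global Chern-class bookkeeping is numerically equivalent to your adjunction-plus-blow-up computation, but it sidesteps precisely the smoothness issues you flag: the paper only needs $S$ irreducible and $s$ not vanishing in codimension one, whereas you additionally need $S$ smooth along the $E_i$ and $\sK$ smooth at each $[l_i]$. Those extra checks are doable (an incidence-variety dimension count shows the lines contained in a general $D$ avoid any fixed proper closed subset of $\sK$, in particular its non-free locus, so Lemma \ref{lemmalinesfree} applies; note also that only the local discrepancy $K_S-\sigma^*K_\sK=\sum E_i$ at the $r$ points enters, so singularities of $\sK$ elsewhere are harmless), but for $d=1$ you must also deal with the base point $p$ of $|H|$: every $S$ passes through $e^{-1}(p)$, so Bertini alone does not give smoothness of $S$ there. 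The one point you omit entirely is the irreducibility of $S$, to which the paper devotes a separate argument (with a special case for $d=1$); in your setup it follows quickly from the finiteness of $r$, since $e^*H$ meets each $q$-fibre in a single point, so $S$ has a unique reduced horizontal component, and a vertical component would force $D$ to contain a one-parameter family of lines. With these points filled in, your proof is a valid alternative to the paper's.
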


\begin{proof}
Using the notation of Section \ref{subsectionVMRT}, we have $p=0$.
    By Remark \ref{remarkVMRTunsplit} we are in the situation of Proposition \ref{propositionclassVMRT}. Since the family of rational curves in unsplit, we have
    $\overline{\sU}=\sU$ and $e=\bar e$. Thus we only have to show that
    $$
    - e_* c_1(T_{\sU/\sK}) = \left(
	\frac{r}{d}-k
	\right)
	H.
    $$
    We will use an argument inspired by \cite[Sect.10]{CG72}: the pull-back
    $e^* \sO_X(H)$ is a nef and big line bundle on $\sU$ that has degree one
    on the $q$-fibres. Thus we have
    $$
    \sU \cong \PP(V)
    $$
    where $V := q_* (\sO_X(H))$ and $e^* H$ identifies to the tautological divisor
    on the projectivised bundle $\PP(V)$. Thus we have
    \begin{equation} \label{segredelpezzo}
    c_1^2(V)-c_2(V) = (e^* H)^3 = \deg(e) H^3 = k \cdot d.
    \end{equation}

    Let now $D \in |H|$ be a general element, and denote by $D' \subset \PP(V)$ its preimage. We claim that $D'$ is irreducible. For the proof of the claim we make a case distinction:

    {\em 1st case. Assume that $d \geq 2$, so $|H|$ is basepoint-free.}
    Since the linear system $e^* |H|$ is basepoint-free, the divisor $D'$ is smooth by Bertini's theorem. Since it is nef and big, it is connected.

        {\em 2nd case. Assume that $d=1$, so $|H|$ has a unique basepoint $p$.}
    Since the base locus of the linear system $e^* |H|$ is the preimage of the point $p$,
    and $\fibre{e}{p}$ has dimension at most one, we see that there are no fixed components. Moreover, since $e^* |H|$ is not composed with a pencil, a general member of the linear system is irreducible, so $D'$ is irreducible.

        The map $D' \rightarrow \sK$ is birational since $e^* H$ has degree one on the $q$-fibres.
    Moreover $[l] \in \sK$ is in the image of the exceptional locus of $D' \rightarrow \sK$
    if and only if the corresponding curve $l \subset X$ is contained in $D$. Thus, using the notation introduced above, the number of exceptional curves is equal to $r$.
    Since the divisor $D'$ can also be considered as a rational section of $\PP(V) \rightarrow \sK$ induced by a section $s \in H^0(\sK, V)$ that does not vanish in codimension one, we obtain that
    $$
    c_2(V) = r.
    $$
    Combined with \eqref{segredelpezzo} we obtain
    $$
    c_1^2(V) = k \cdot d + r.
    $$
    By the projection formula this implies
    $$
    (e_* q^* c_1(V)) \cdot H^2 = q^* c_1(V) \cdot e^* H^2 =  q^* c_1(V) \cdot
    c_1(\sO_{\PP(V)}(1))^2 = c_1^2(V) = k \cdot d + r.
    $$
    Since $\pic(X) \cong \Z H$, we have $e_* q^* c_1(V) = m H$ for some $m \in \N$. By the preceeding computation we have
    \begin{equation} \label{formulam}
       m d = k \cdot d + r \ \Leftrightarrow \ m = k + \frac{r}{d}.
    \end{equation}
    We are finally ready for the conclusion: since $\sU \cong \PP(V)$ we have
    $$
    c_1(T_{\sU/\sK}) = 2 c_1(\sO_{\PP(V)}(1)) - q^* c_1(V).
    $$
    Since $c_1(\sO_{\PP(V)}(1))=e^* H$ we obtain
    $$
    - e_*(c_1(T_{\sU/\sK})) = - 2 k H + e_* q^* c_1(V) = (- 2 k + m) H.
    $$
    Conclude with \eqref{formulam}.
    \end{proof}

As an immediate application, one can easily derive the total dual VMRT of del Pezzo threefolds
of low degree:

\begin{theorem}
	\label{thm:dual-VMRT-V5}
	Let $X$ be a smooth del Pezzo threefold of degree $d \leq 5$ such that $\pic(X) \cong \Z H$. If 	$F(X)$ is irreducible,
	the class $[\check{\mathcal{C}}]$ of the total dual VMRT of $X$ is given by the following table :

\begin{center}
\begin{tabular}{|c|c|c|c|c|c|c|c|c|c|c|c|}
\hline
$d$ & 1 & 2 & 3 & \hspace{0.5cm} 4 \hspace{0.5cm} & 5   \\ \hline
$[\check{\mathcal{C}}]$  & $60\zeta+m\pi^*H, \ m \geq 180$ & $12 \zeta + 16\pi^* H$
& $6\zeta + 3\pi^* H$ & $4\zeta$ & $3\zeta - \pi^* H$   \\ \hline
\end{tabular}
\label{tableclasses}
\end{center}

If $F(X)$ is reducible (in particular $d=1$ or $d=2$), denote by
$(\sK_i)_{i=1\ldots l}$ its irreducible components, and by
$[\check{\mathcal{C}_i}]$ the corresponding total dual VMRTs.
Then the following holds:

\begin{center}
\begin{tabular}{|c|c|c|c|c|c|c|c|c|c|c|c|}
\hline
$d$ & 1 & 2   \\ \hline
$\sum_{i=1}^l [\check{\mathcal{C}_i}]$  & $60\zeta+m\pi^*H, \ m \geq 180$ & $12 \zeta + 16\pi^* H$
   \\ \hline
\end{tabular}
\label{tableclasses2}
\end{center}
\end{theorem}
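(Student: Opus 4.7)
The plan is to apply Lemma \ref{lemmaclassdelpezzo} componentwise and sum over the irreducible components $\sK_1, \ldots, \sK_l$ of $F(X)$. Writing $k_i = \deg(e|_{\sU_i})$ and $r_i = \#\{[l] \in \sK_i \mid l \subset D\}$ for a general $D \in |H|$, the lemma yields
\[
\sum_{i=1}^l [\check{\mathcal{C}_i}] = K \zeta + \pi^*\!\left( \tfrac{R}{d} - K \right) H,
\qquad K := \sum_i k_i,\ R := \sum_i r_i,
\]
so the task reduces to computing the two integers $K$ and $R$ for each value of $d \in \{1,\ldots,5\}$.

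For the invariant $R$: a general $D \in |H|$ is, by adjunction, a smooth del Pezzo surface of degree $d$, and the classical count of $(-1)$-curves on such a surface is $N_d = 10, 16, 27, 56, 240$ for $d = 5, 4, 3, 2, 1$. By Remark \ref{remarkidentifylines}, for $d \geq 2$ the lines of $X$ contained in $D$ coincide with the $(-1)$-curves of $D$, giving $R = N_d$ exactly. For $d = 1$, I would first verify that a general $D \in |H|$ contains no singular line of $X$: every singular line is a component of the base locus of some pencil $|\sI_x \otimes \sO_X(H)|$ with $x \neq p$, so imposing $l \subset D$ cuts out a codimension-one condition on $|H|$. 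Hence only smooth lines (equivalently $(-1)$-curves of $D$) contribute, giving at least $240$ lines; since $r$ is computed in the proof of Lemma \ref{lemmaclassdelpezzo} as $c_2(V)$, i.e.\ as the scheme-theoretic zero locus of a section, we get only the inequality $R \geq 240$.

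For the invariant $K$: this is the number of lines of $X$ through a general point, summed over components, and it equals $3, 4, 6, 12, 60$ for $d = 5, 4, 3, 2, 1$ respectively. In degrees $3 \leq d \leq 5$ these are well-known classical invariants (three lines through a general point of $V_5$; four lines through a general point of $V_4$; the Clemens--Griffiths count of six lines on the cubic threefold). For $d = 1$ and $d = 2$ one exploits the double-cover description of $X$ (branched respectively over a quartic in $\PP^3$ or a sextic in $\PP(1,1,1,2)$) and counts lines by looking at their image and their intersection with the branch locus.

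Substituting these values of $K$ and $R$ into the displayed formula gives the two tables directly; for instance $d=3$ yields $6\zeta + (27/3 - 6)\pi^*H = 6\zeta + 3\pi^*H$, and $d = 1$ yields $60\zeta + (R - 60)\pi^* H$ with $R - 60 \geq 180$. The main obstacle I anticipate is the case $d = 1$: not only is the computation of $K = 60$ the most delicate (requiring explicit work with the weighted projective model of $V_1$), but one must also justify carefully that the scheme-theoretic invariant $R$ bounds the set-theoretic count of $240$ from above, which is what produces the inequality $m \geq 180$ in the statement rather than an equality.
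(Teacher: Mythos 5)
Your strategy is the same as the paper's: reduce everything to Lemma \ref{lemmaclassdelpezzo}, feed in the count $r$ of lines in a general hyperplane section via the classical table of $(-1)$-curves, and feed in $k$ (or $K=\sum k_i$) as the number of lines through a general point, computed case by case from the classical models. The numerical values you quote are the ones the paper uses, and the substitution into the formula is the same. There are, however, two places where your sketch has a real gap relative to what needs to be proved.

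First, the reducible case. Your formula $\sum_i [\check{\mathcal{C}_i}] = K\zeta + \pi^*(\tfrac{R}{d}-K)H$ is exactly right, but you assert the values of $K$ without addressing the point that makes the second table nontrivial: $F(X)$ can only be reducible for \emph{special} $X$ of degree $1$ or $2$, whereas the references that compute the degree of the evaluation map (the VMRT computations of Hwang--Kim, etc.) are naturally tied to the general member of the deformation family. The paper closes this gap by proving that the number of lines through a very general point is a deformation invariant (using that the relative space of lines through a section of a smooth family is proper and, over the free locus, smooth over the base), and then importing the value $60$ resp.\ $12$ from the irreducible case. Your alternative --- a direct bitangent/branch-locus count valid for \emph{every} smooth quartic $S\subset\PP^3$, resp.\ every smooth sextic in $\PP(1,1,1,2)$ --- could in principle work, but that uniformity is precisely the delicate point and you do not supply it; as written, the second table is unproved.

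Second, for $d=1$ your justification of $R\geq 240$ is shaky. You claim a general $D\in|H|$ contains no singular line because containment is a codimension-one condition on $|H|\cong\PP^2$. But the singular lines form a positive-dimensional family (a curve in the Fano surface, by Tikhomirov), and a one-parameter family of codimension-one conditions will in general sweep out all of $|H|$; so a general $D$ may well contain singular lines. Fortunately this does not hurt the conclusion: singular lines contained in $D$ are lines of $X$ that are \emph{not} $(-1)$-curves of $D$, so they can only increase the count beyond the $240$ smooth ones --- which is exactly how the paper obtains the inequality $r\geq 240$, i.e.\ $m\geq 180$. Your attribution of the inequality to a discrepancy between the set-theoretic count and $c_2(V)$ points at a genuine subtlety in Lemma \ref{lemmaclassdelpezzo}, but it is not the mechanism the paper uses, and your intermediate claim should be dropped rather than repaired.
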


\begin{remark*}
    We do not know if there are del Pezzo threefolds such that $F(X)$ is reducible. However the irreducibility seems to be known only in the case where $X$ is general in its deformation family.
\end{remark*}

\begin{proof}
	We will use the notation of Lemma \ref{lemmaclassdelpezzo}. By the lemma the statement reduces to computing the numbers $r$ and $k$. 
	
	{\em 1st case. $F(X)$ is irreducible.} If $d \geq 2$, then
$r$ is the number of $(-1)$-curves in a general hyperplane section $D$ (see Remark \ref{remarkidentifylines}). 
If $d=1$, then
$r$ is at least the number of $(-1)$-curves in $D$. 

This number is known for every degree $d$, see \cite[Table 8.1]{Dol12}.

\begin{center}
\begin{tabular}{|c|c|c|c|c|c|c|c|c|c|c|c|}
\hline
$d$ & 1 & 2 & 3 & 4 & 5   \\ \hline
\mbox{number of $(-1)$-curves}  & 240 & 56 & 27 & 16 & 10    \\ \hline
\end{tabular}
\label{tableminusone}
\end{center}	
	
 We will compute $k$ case-by-case:
	
	If $d=1$, the manifold $X$ is a sextic in the weighted projective space $\PP(3,2,1,1,1)$. By \cite[Prop.6.7, Prop.4.7]{HK15} the VMRT is a complete intersection of degree $(4,5,6)$ in $V(U,Q)$, which is an open subset of $\PP(1,1,1,2)$ (see \cite[Notation 3.8]{HK15}). Thus we have $k=60$.
	
	If $d=2$, the manifold $X$ is a double cover of $\PP^3$ ramified along a quartic \cite[Thm.3.3.1]{EncV}. By \cite[Thm.1.1]{HK13}, the degree of the evaluation map is $k=12$.

	For $d=3$, the manifold $X$ is isomorphic to a smooth cubic hypersurface in $\PP^4$. By \cite{CG72} we  have $k=6$.
	
	For $d=4$, the manifold $X$ is isomorphic to a smooth complete intersection of two quadrics in $\PP^5$. Then we have $k=4$ \cite[Example 4.3]{CZ19}.
	
	For $d=5$, the manifold $X$ is isomorphic to a linear section of $\mbox{\rm Gr(2,5)}\subset \PP^9$. Since the VMRT of $\mbox{\rm Gr(2,5)}$ is isomorphic to the Segre embedding of $\PP^1\times \PP^2$ in $\PP^5$, the VMRT of $X$ is isomorphic to a linear section of $\PP^1\times \PP^2$. As a consequence, we have $k=3$ (see also for instance \cite[Lemma 2.3 (1)]{FN89}).

{\em 2nd case. $F(X)$ is reducible.} Denote by $k_i$ the degree of the evaluation map
corresponding the component $\sK_i$ and by $r_i$ the number defined in \eqref{definer}.
Since every $(-1)$-curve on $D$ belongs to one of the families $\sK_i$, we have
$\sum_{i=1}^l r_i=r$ where $r$ is at least the number given in Table \ref{tableminusone} (cf. Remark \ref{remarkidentifylines}).
Note also that $\sum_{i=1}^l k_i$ is the number of lines passing through a general point of $X$.
We claim that this number is an invariant in the deformation family. Assuming this for the time being, let us show how to conclude : by Lemma \ref{lemmaclassdelpezzo} we have
	$[\check{\mathcal{C}_i}]=k_i \zeta+\pi^* \left(
	\frac{r_i}{d}-k_i
	\right)
	H$.
Thus we have
$$
\sum_{i=1}^l [\check{\mathcal{C}_i}] = \sum_{i=1}^l k_i \zeta+\pi^* \left(
	\frac{r}{d}- \sum_{i=1}^lk_i
	\right) H.
$$
Since $\sum_{i=1}^l k_i$ is invariant in the deformation family, we know its value
from the case where $F(X)$ is irreducible.

{\em Proof of the claim.} If $\mathcal X \rightarrow \Delta$
is a smooth family of del Pezzo threefolds, and $f: \Delta \rightarrow X$ is a section
passing through a very general point of the central fibre $\mathcal X_0$, denote by
$\sV$ the union of the irreducible components of the space $\mbox{RatCurves}^n(f, \mathcal X/\Delta)$ \cite[II,(2.11.2)]{Ko96}
that parametrise lines. Since lines have minimal degree with respect to the polarisation,
$\sV$ is proper and finite over $\Delta$. Moreover, since a very general point is in the free locus of $\mathcal X_0$, it follows from \cite[II, Thm.1.7]{Ko96} that
$\sV \rightarrow \Delta$ is smooth. Thus the number of preimages of $\sV \rightarrow \Delta$ does not depend on $t \in \Delta$.
\end{proof}

\begin{remark}
	One can also use the detailed description of the universal family of lines on $X$ to get the same result: for $d=3$, by \cite{CG72}, we have $V\cong \Omega_{F(X)}$ and $K^2_{F(X)}=45$; for $d=5$, by \cite{FN89}, $F(X)$ is isomorphic to $\PP^2$ and $\det(V)\cong \sO_{\PP^2}(5)$.
\end{remark}

\begin{corollary} \label{corollarydelpezzonotbig}
    Let $X$ be a smooth del Pezzo threefold of degree $d \leq 2$ such that $\pic(X) \cong \Z H$.
    Then $T_X$ is not big.
\end{corollary}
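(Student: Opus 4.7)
The approach is to derive a contradiction from the assumption that $T_X$ is big by intersecting a divisorial Zariski decomposition of $\zeta-\varepsilon\pi^*H$ with a movable curve obtained as a minimal section, thereby adapting the one-family argument of Corollary \ref{corollarytangentnotbig} to the (possibly) reducible Hilbert scheme $F(X)$. If $T_X$ is big, then $\zeta$ lies in the interior of the pseudoeffective cone of $\PP(T_X)$, so for all sufficiently small $\varepsilon>0$ the class $\zeta-\varepsilon\pi^*H$ is still pseudoeffective and the decomposition \eqref{Bdecomposition} takes the form
\[
\zeta-\varepsilon\pi^*H\equiv\sum_{j=1}^{l} c_j[\check{\mathcal{C}_j}]+\sum_k b_k[E_k]+P,
\]
where the $\check{\mathcal{C}_j}$ are the total dual VMRTs attached to the irreducible components $\sK_j$ of $F(X)$, the $E_k$ are the remaining prime divisors occurring in the negative part, $c_j,b_k\geq 0$, and $P$ is modified nef.

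The key input from Theorem \ref{thm:dual-VMRT-V5} is that $\sum_{j}[\check{\mathcal{C}_j}]=(\sum_j k_j)\zeta+(\sum_j(\tfrac{r_j}{d}-k_j))\pi^*H$ has non-negative coefficient on $\pi^*H$ for $d\leq 2$ (namely $16$ if $d=2$ and at least $180$ if $d=1$). By pigeonhole some index $j_0$ satisfies $\tfrac{r_{j_0}}{d}-k_{j_0}\geq 0$, which by Lemma \ref{lemmaclassdelpezzo} is precisely the intersection number $\check{\mathcal{C}_{j_0}}\cdot\tilde{l}_{j_0}$, where $\tilde{l}_{j_0}$ denotes a minimal section of $\PP(T_X)$ lying over a general line of $\sK_{j_0}$. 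Since $p=0$, these minimal sections form a $3$-dimensional family sweeping out the irreducible $4$-dimensional divisor $\check{\mathcal{C}_{j_0}}\subset\PP(T_X)$, so $[\tilde{l}_{j_0}]$ is a movable curve class inside $\check{\mathcal{C}_{j_0}}$.

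Intersecting the Zariski decomposition with $\tilde{l}_{j_0}$ now yields the contradiction: the left hand side equals $-\varepsilon<0$, while on the right hand side the term $c_{j_0}\check{\mathcal{C}_{j_0}}\cdot\tilde{l}_{j_0}$ is non-negative by the choice of $j_0$, the terms $c_j\check{\mathcal{C}_j}\cdot\tilde{l}_{j_0}$ with $j\neq j_0$ and $b_kE_k\cdot\tilde{l}_{j_0}$ are non-negative because the restriction to $\check{\mathcal{C}_{j_0}}$ of the corresponding prime divisor is an effective divisor class paired with a movable curve, and $P\cdot\tilde{l}_{j_0}\geq 0$ because $P|_{\check{\mathcal{C}_{j_0}}}$ is pseudoeffective by \cite[Prop.2.4]{Bou04} and $\tilde{l}_{j_0}$ is movable in $\check{\mathcal{C}_{j_0}}$. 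The main obstacle is the potential reducibility of $F(X)$, which is not covered directly by Corollary \ref{corollarytangentnotbig}; the decisive idea is the pigeonhole step above, which converts the aggregate non-negativity $\sum_j(\tfrac{r_j}{d}-k_j)\geq 0$ furnished by Theorem \ref{thm:dual-VMRT-V5} into the existence of a single component $\sK_{j_0}$ against which the one-family argument can be localised.
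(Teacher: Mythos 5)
Your proof is correct and follows essentially the same route as the paper: the paper's own argument is exactly the pigeonhole step applied to the aggregate class from Theorem \ref{thm:dual-VMRT-V5} to produce one component $\sK_{j_0}$ with $\frac{r_{j_0}}{d}-k_{j_0}\geq 0$, followed by the minimal-section argument of Corollary \ref{corollarytangentnotbig}. The only difference is presentational -- you inline that corollary's proof and extract the contradiction by pairing a divisorial Zariski decomposition of $\zeta-\varepsilon\pi^*H$ against a general minimal section movable in $\check{\mathcal{C}}_{j_0}$, rather than citing the corollary directly; this is a valid (and if anything slightly more careful) repackaging of the same mechanism.
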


\begin{proof}
    If $F(X)$ is irreducible, we simply apply the first part of Theorem \ref{thm:dual-VMRT-V5}, and Corollary \ref{corollarytangentnotbig}.

    If $F(X)$ is reducible, we know by the second part of Theorem \ref{thm:dual-VMRT-V5}
    that we can choose one irreducible component $\sK_i \subset F(X)$ such that
    the corresponding total dual VMRT $\check{\mathcal{C}_i}$
    has class $[\check{\mathcal{C}_i}]= e \zeta + m \pi^* H$ for some $m \geq 0$. Thus
    Corollary \ref{corollarytangentnotbig} still applies.
\end{proof}

\subsection{Projective geometry of del Pezzo threefolds}

The general technique developed in the previous subsection allows us to decide whether $T_X$ is big or not.  The pseudoeffectivity of $T_X$ in the cases $d=1$ and $d=2$ will
require a more detailed treatment. We start with a technical lemma:

\begin{lemma}
	\label{lemma:del-Pezzo-intersection-number}
	Let $X$ be a smooth del Pezzo threefold of degree $d$ such that $\pic(X)\cong \Z H$. Then we have
	\begin{equation}
	\label{equation:schur-function-threefolds}
	\zeta^5=8d-44-b_3(X),\quad \zeta^4\cdot \pi^*H=4d-12,\quad \zeta^3\cdot \pi^*(H^2)=2d.
	\end{equation}
	
\end{lemma}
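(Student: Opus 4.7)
The plan is to reduce everything to intersection numbers on $X$ via the projection formula, and then identify those intersection numbers with standard Chern/Segre data on a Fano threefold. Since the paper's convention is projectivisation in the sense of Grothendieck, for a rank three bundle $E = T_X$ on the threefold $X$ the push-forwards of powers of the tautological class are the Segre classes of the dual bundle:
$$
\pi_*(\zeta^{2+i}) = s_i(\Omega_X), \quad i = 0, 1, 2, 3.
$$
Combined with the projection formula this gives
$$
\zeta^5 = s_3(\Omega_X), \quad \zeta^4 \cdot \pi^*H = s_2(\Omega_X)\cdot H, \quad \zeta^3 \cdot \pi^*(H^2) = s_1(\Omega_X) \cdot H^2,
$$
so the task is to evaluate these three Segre intersection numbers.

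The Segre classes are recovered from $s(\Omega_X)\cdot c(\Omega_X) = 1$ together with the Chern class identities $c_i(\Omega_X) = (-1)^i c_i(T_X)$. Since $-K_X = 2H$, one has $c_1(\Omega_X) = -2H$, and a short recursive computation gives
$$
s_1(\Omega_X) = 2H, \qquad s_2(\Omega_X) = 4H^2 - c_2(T_X), \qquad s_3(\Omega_X) = 8H^3 - 4H\cdot c_2(T_X) + c_3(T_X).
$$
The identity $\zeta^3\cdot\pi^*(H^2) = 2H^3 = 2d$ is then immediate.

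For the other two, I need two classical facts. First, by the Riemann--Roch formula on a threefold, $\chi(\sO_X) = \frac{1}{24} c_1(T_X)\cdot c_2(T_X)$; since $X$ is Fano, $\chi(\sO_X) = 1$, hence $2H\cdot c_2(T_X) = 24$ and so $H\cdot c_2(T_X) = 12$. This yields $\zeta^4\cdot\pi^*H = 4d - 12$. Second, $c_3(T_X) = \chi_{\mathrm{top}}(X)$, and for a Fano threefold all odd Betti numbers except possibly $b_3$ vanish (Kodaira vanishing gives $H^{i,0}=0$ for $i>0$, whence $b_1=0$ and $b_3 = 2h^{2,1}$), while $b_2 = h^{1,1} = \rho(X) = 1$ under our assumption. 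Poincar\'e duality then gives
$$
\chi_{\mathrm{top}}(X) = 2 + 2\,b_2(X) - b_3(X) = 4 - b_3(X),
$$
and plugging into $s_3(\Omega_X)$ yields $\zeta^5 = 8d - 48 + (4 - b_3(X)) = 8d - 44 - b_3(X)$.

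No real obstacle arises: the only point requiring care is the sign/convention in the Grothendieck projectivisation, which makes the relevant Segre classes those of $\Omega_X$ rather than of $T_X$. Once that is nailed down the rest is a direct expansion.
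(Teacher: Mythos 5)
Your proposal is correct and follows essentially the same route as the paper: compute $H\cdot c_2(T_X)=12$ via Riemann--Roch, identify $c_3(T_X)=\chi_{\mathrm{top}}(X)=4-b_3(X)$ using $b_2(X)=\rho(X)=1$, and push forward powers of $\zeta$ to Segre classes (the paper simply cites the Segre class formulas from Lazarsfeld where you expand $s(\Omega_X)=1/c(\Omega_X)$ by hand). Your handling of the Grothendieck convention, namely $\pi_*(\zeta^{2+i})=s_i(\Omega_X)$ with $s\cdot c=1$, matches the convention used elsewhere in the paper.
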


\begin{proof}
	Since $X$ is a Fano manifold, by Kodaira vanishing, we have $h^i(X,\sO_X)=0$ for $i\geq 1$. Thus, by our assumption and using the Hodge decomposition, we get $b_1(X)=0$ and $b_2(X)=\rho(X)=1$. On the other hand, it is well-known that the degree of the top Chern class is equal to the topological Euler characteristic, i.e.,
	\[
	c_3(X)=\chi_{\rm top}(X)=4-b_3(X).
	\]
	Moreover by Riemann-Roch in dimension three one has
	$$
	24 \chi(X, \sO_X) = -K_X \cdot c_2(X) = 2 H \cdot c_2(X).
	$$
	Thus we have $H \cdot c_2(X)=12$.
    The result now follows from the formulas for Segre classes \cite[Ex.8.3.4, Ex.8.3.5]{Laz04b}.
\end{proof}

\subsubsection{del Pezzo threefolds of degree one}

\label{Section:del-Pezzo-degree-one}

Let $X$ be a smooth del Pezzo threefold of degree one. Then $X$ is a double cover
$$
\mu : X \rightarrow W
$$
of the Veronese cone $W=\PP(1,1,1,2)$. The branch locus of $\mu$ consists
of the vertex of $W$ and a smooth weighted hypersurface $S$ of degree $6$ in $W$ not passing through the vertex of $W$ (see for instance \cite[Theorem 2.4.5]{EncV}). 

Denote by $A$ a Weil $\Q$-Cartier divisor on $W$ associated to the reflexive sheaf $\sO_W(1)$. Then we have $\mu^*A=H$,
and the unique base point $p \in X$ of $\vert H\vert$ maps onto the vertex of $W$, i.e. the unique base point of $|A|$.
Denote by $F$ the fibre of $\holom{\pi}{\PP(T_X)}{X}$ over the base point $p$.

	Denote by $W^\circ=W\setminus\{\mu(p)\}$ the smooth locus of $W$,
	and denote $X\setminus\{p\}$ by $X^\circ$.
Since the ramification divisor of the double cover is isomorphic to $S$, we will identify $\mu^{-1}(S)$ with $S$.
The restriction of the cotangent map composed with the canonical map
$\Omega_{X^\circ}\vert_S\rightarrow \Omega_S$ gives a surjective map
$$
\mu^*\Omega_{W^\circ}\vert_S\rightarrow \Omega_{X^\circ}\vert_S\rightarrow \Omega_S.
$$
Thus the second exterior power of the cotangent map
	$$
	\mu^* (\wedge^2 \Omega_{W^\circ}) \rightarrow \wedge^2 \Omega_{X^\circ}
	$$
	has degree one along $S$ and its image identifies to $K_S$.
	
The kernel of $\Omega_{X}\vert_S\rightarrow \Omega_S$ is isomorphic to $\sO_S(-S)$, so
we obtain a canonical splitting
	\[
	\Omega_X\vert_S = \Omega_S\oplus \sO_S(-S)\cong \Omega_S\oplus \sO_S(-3H).
	\]
 Using this
 canonical splitting, we have a canonical quotient $T_X|_S \rightarrow T_S$
 which allows us to consider $\PP(T_S)$ as a subvariety of $\PP(T_X|_S)$.

\begin{proposition}
	\label{prop:nef-cone-del-Pezzo-degree-one}
	Let $X$ be a smooth del Pezzo threefold of degree one.
	Then the following statements hold:
	\begin{enumerate}[(a)]
			\item there exists an irreducible divisor $D\in \vert \zeta+\pi^*H\vert$;
		\item the base locus of $\vert\zeta+2\pi^*H\vert$ is contained in $\PP(T_S)\cup F$;
		\item the base locus of $\vert(\zeta+3\pi^*H)\vert_{\PP(T_S)}\vert$ has dimension at most one. In particular, the restriction of $(\zeta+3\pi^*H)^2$ to any codimension two subvariety is pseudoeffective;
		\item $\zeta+4 \pi^*H$ is nef;
		\item if $X$ is general in its deformation family, then $\zeta+ \varepsilon \pi^*H$ is nef if and only if $\varepsilon \geq 3$.
	\end{enumerate}
\end{proposition}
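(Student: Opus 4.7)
The five parts are handled sequentially, with the double cover $\mu:X\to W=\PP(1,1,1,2)$ supplying the necessary sections and divisors on $\PP(T_X)$. The unifying observation is that off the ramification $S$ and the base point $p$ of $|H|$, the differential $d\mu:T_X\to\mu^*T_W$ is an isomorphism, so the abundance of global vector fields on the toric base $W$ translates into plentiful sections of twists of $T_X$ via $H=\mu^*A$.

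For part (a), the plan is to exhibit a nonzero $\sigma\in H^0(X,T_X\otimes\sO_X(H))$ by lifting a suitable vector field on $W$ through $d\mu$; the divisor $D\in|\zeta+\pi^*H|$ cut out by $\sigma$ is then shown to be irreducible by restricting to a general $\pi$-fibre, where it meets the fibre $\PP^2$ in a hyperplane. For part (b), one enlarges this family of sections by multiplying with elements of $H^0(X,\sO_X(H))$ or by using further vector fields on $W$, confining their common zero locus to $\PP(T_S)\cup F$, since $d\mu$ is surjective exactly off $S\cup\{p\}$. For part (c), I would use the canonical splitting $\Omega_X|_S\cong\Omega_S\oplus\sO_S(-3H)$ recorded before the proposition to express $(\zeta+3\pi^*H)|_{\PP(T_S)}$ as a twist of the tautological class of $\PP(T_S)$, and invoke the positivity of $\Omega_S$ together with $K_S\sim H|_S$ to cut the base locus down to dimension one. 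The pseudoeffectivity of $(\zeta+3\pi^*H)^2$ on codimension two subvarieties then follows by a direct intersection calculation, since such subvarieties meet the base locus in at most a finite set.

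Parts (d) and (e) are standard curve-chasing. For (d), if some curve $C\subset\PP(T_X)$ satisfies $(\zeta+4\pi^*H)\cdot C<0$, then by (b) one has $C\subset\PP(T_S)\cup F$; the fibre case is immediate from $\pi$-relative ampleness of $\zeta$, while in the $\PP(T_S)$ case (c) gives $(\zeta+3\pi^*H)\cdot C\geq 0$, whence $(\zeta+4\pi^*H)\cdot C\geq\pi^*H\cdot C\geq 0$, a contradiction. For the necessity in (e), any rational curve $C\subset S$ which is a line on $X$ satisfies $C^2=-3$ on $S$ by adjunction applied to $K_S\sim H|_S$, so $T_S|_C\cong\sO(2)\oplus\sO(-3)$ and the minimal section $\widetilde C\subset\PP(T_S)\subset\PP(T_X)$ has $\zeta\cdot\widetilde C=-3$ and $\pi^*H\cdot\widetilde C=1$. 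For $X$ general in its deformation family, the existence of such a $(-3)$-curve on $S$ gives the sharp obstruction $\varepsilon\geq 3$, and the matching sufficiency follows by refining the curve-chasing of (d) into a full nefness statement for $(\zeta+3\pi^*H)|_{\PP(T_S)}$ away from these finitely many minimal sections. I expect this last step to be the main obstacle, as it requires a careful deformation-theoretic control of the lines on the branch surface of a general del Pezzo threefold of degree one.
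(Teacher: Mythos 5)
Your overall strategy --- transporting positivity from $W=\PP(1,1,1,2)$ through the double cover $\mu$ via the weighted Euler sequence --- is the same as the paper's, but several of the individual steps do not go through as written. In (a), checking that $D$ meets a general $\pi$-fibre in a hyperplane only excludes the possibility of two components both dominating $X$; it cannot detect a vertical component, and the real danger is precisely that $\PP(T_X\vert_S)=\pi^{-1}(S)$, where the (twisted) differential of $\mu$ degenerates, is a component of $D$. The paper rules this out by observing that $[\PP(T_X\vert_S)]=3\pi^*H$, so a vertical component would make $\zeta-2\pi^*H$ effective and hence $T_X$ big (Lemma \ref{lemmasubstractbig}), contradicting Corollary \ref{corollarytangentnotbig}; this non-bigness input is absent from your sketch. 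In (b), the observation that $d\mu$ is an isomorphism off $S\cup\{p\}$ only confines the base locus to the \emph{divisor} $\PP(T_X\vert_S)\cup F$, not to the codimension-two set $\PP(T_S)\cup F$ that is claimed and that (c)--(d) require; to descend you must show that along $S$ the image of $\mu^*T_W\to T_X(3H)$ is exactly the line complementary to $\Omega_S(H)$ in the canonical splitting, so that global sections still generate the $\sO_S(5H)$-direction. In (c), ``the positivity of $\Omega_S$'' is not available --- $S$ is merely of general type --- and the working mechanism is the degree-three projection $\gamma\colon S\to\PP^2$ from the vertex together with global generation of $\Omega_{\PP^2}(2)$, which generates $T_S(3H)\cong\Omega_S(2H)$ off the ramification curve of $\gamma$.

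The most serious gaps are in (d) and (e). Part (c) does \emph{not} give $(\zeta+3\pi^*H)\cdot C\geq 0$ for every curve $C\subset\PP(T_S)$: curves lying inside the one-dimensional base locus are exactly the ones left uncontrolled, so your curve-chasing for (d) fails precisely where it matters. The paper instead proves the stronger fact that $T_S(4H)\cong\Omega_S(3A)$ is globally generated (by restricting $\wedge^2(T_{W^\circ}(-A))$), which makes $(\zeta+4\pi^*H)\vert_{\PP(T_S)}$ nef outright. For (e), your witness curves --- lines of $X$ contained in $S$ --- are not shown to exist on a general $X$, and the sufficiency of $\varepsilon=3$, which you acknowledge as the main obstacle, is indeed the crux: the paper settles both directions at once by invoking Tikhomirov's results that for general $X$ the ramification curve $C$ of $\gamma$ is smooth and irreducible with $\sO_S(C)\cong\sO_S(4H)$ and that the non-nef locus of $(\zeta+3\pi^*H)$ on $\PP(T_S\vert_C)$ is contained in an explicit irreducible divisor $D$ of class $(\zeta-2\pi^*H)\vert_{\PP(T_S\vert_C)}$, and then computes $(\zeta+3\pi^*H)\cdot D=0$; this single number yields nefness at $\varepsilon=3$ and, since $\pi^*H\cdot D>0$, its failure for every $\varepsilon<3$. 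Without these external inputs the proposal does not establish (c), (d) or (e).
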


\begin{remark}
    We have $b_3(X)= 42$ by \cite[Table 12.2]{EncV}. Thus by Lemma \ref{lemma:del-Pezzo-intersection-number} we have
\begin{equation} \label{equationnumbersdegreeone}
\zeta^5=-78,\quad \zeta^4\cdot \pi^*H=-8,\quad \zeta^3\cdot \pi^*(H^2)=2
\end{equation}
\end{remark}

\begin{proof}

	In what follows we will frequently use that, since $X$ is smooth
	and $\codim(X\setminus X^\circ)=3$, one has
	\[
	H^0(X^\circ, T_{X^\circ}(mH)) \cong H^0(X, T_X(mH)) \qquad \forall \ m \in \Z.
	\]

We consider the cotangent map $\mu^* \Omega_{W^\circ}\rightarrow \Omega_{X^\circ}$.
Since $-K_W = 5 A$ by \cite{BR86} and $\mu^* A=H$, taking the second exterior power yields an injective morphism
\begin{equation} \label{tangentmapdegreeone}
\mu^* T_{W^\circ} \cong \mu^* (\wedge^2 \Omega_{W^\circ} (5A))
\rightarrow \wedge^2 \Omega_{X^\circ} (5H) \cong T_{X^\circ} (3H)
\end{equation}
that is an isomorphism in the complement of $S$. Moreover, by the discussion preceding Proposition \ref{prop:nef-cone-del-Pezzo-degree-one}
the restriction
\begin{equation} \label{tangentmapdegreeonebis}
	(\mu^* T_W)\vert_S \rightarrow (T_X(3H))
	\vert_S = K_S(5H) \oplus \Omega_S(2H)
\end{equation}
has image $K_S(5H)$.

We have the following weighted Euler sequence
	\begin{equation}
	\label{equation:weighted-Euler-sequence}
	0\rightarrow \sO_{W^\circ}\rightarrow \sO_{W^\circ}(1)^{\oplus 3}\oplus \sO_{W^\circ}(2)\rightarrow T_{W^\circ}\rightarrow 0.		
	\end{equation}

{\em Proof of (a)}
	Twisting \eqref{equation:weighted-Euler-sequence} with $\sO_{W^\circ}(-2)$ yields a non-zero map
	\[
	\alpha\colon\sO_{W^\circ}\rightarrow T_{W^\circ}(-2).
	\]
	Moreover, from the construction of the Euler sequence, one can easily see that the map $\alpha$ is of rank one everywhere. The composition of $\alpha$ with the morphism \eqref{tangentmapdegreeone}, twisted with $-2H$, gives a morphism
	\[
	\widetilde{\alpha}\colon \sO_{X^\circ}\rightarrow \mu^*(T_{W^\circ}(-2))\rightarrow T_{X^\circ}(H)
	\]
	that has rank one in the complement of $S$.
 Let $D\in \vert \zeta+\pi^*H\vert$ be the element corresponding to the section $\widetilde{\alpha}$ (extended to $X$). Since $\tilde \alpha$ has rank one in the complement of $S \cup p$,
 and $F=\fibre{\pi}{p}$ has codimension three in $\PP(T_X)$, the divisor
 $D$ is irreducible in the complement of $\PP(T_X\vert_S)$. Thus, if $D$ is reducible,
 then it contains the divisor $\PP(T_X\vert_S)$.
 However, note that the class of $[\PP(T_X\vert_S)]$ in $\pic(\PP(T_X))$ is $3 \pi^*H$.
 Thus if $D$ is reducible, the class $\zeta-2 \pi^* H$ is effectively represented.
 Yet by Lemma \ref{lemmasubstractbig} this implies that $\zeta$ is big, in contradiction to Corollary \ref{corollarytangentnotbig}. Hence the divisor $D$ is irreducible.	

{\em Proof of (b)} Since $\sO_{W^\circ}(1)$ is globally generated on $W^\circ$, the
weighted Euler sequence \eqref{equation:weighted-Euler-sequence} twisted with
$\sO_{W^\circ}(-1)$ shows that $T_{W^\circ}(-1)$ is globally generated.
	Then the inclusion \eqref{tangentmapdegreeone}, twisted with $-H$ shows
 that $T_{X}(2H)$ is globally generated outside $S\cup\{p\}$.
 Moreover, twisting \eqref{tangentmapdegreeonebis} with $\sO_{W^\circ}(-1)$ shows that
 the global sections generate the canonical supplement of
 $\Omega_S(H)$ in $(T_{X}(2H))|_S$.
Thus the base locus of  $\zeta+2\pi^*H$ is contained in the union of $F$
and $\PP(T_S) \subset \PP(T_X|_S)$.

{\em Proof of (c)}
Since $S$ is a smooth weighted hypersurface of degree six in $W$ that does not contain the vertex, the natural morphism $\gamma\colon S\rightarrow\PP^2$ induced by the projection $W\dashrightarrow\PP^2$ from the vertex is a finite morphism of degree three. By Lemma \ref{lemma:freeness-twist-tangent-bundle}, the vector bundle $\Omega_{\PP^2}(2)$ is globally generated. The cotangent map $\gamma^*\Omega_{\PP^2}\rightarrow \Omega_S$
twisted with
$2H$ shows that $\Omega_S(2H)$
is globally generated in the complement of the ramification locus $C$ of $\gamma$.
Moreover, since the cotangent map has rank one in a general point of $C$,
the global sections generate a subsheaf of rank at least one along $C$.

Denote by $\holom{\pi_S}{\PP(T_S)}{S}$ the projectivisation, and by $\zeta_S \rightarrow \PP(T_S)$ the tautological class. Since $T_S\cong \Omega_S(-H)$, the preceding paragraph shows that $T_S(3H)$ is globally generated in the complement of $C$ and the base locus of $\zeta_S + \pi_S^*(3H)$ does not contain any divisor. Since the restriction of $\zeta$
to $\PP(T_S)$ is $\zeta_S$, this shows the first statement.

For the proof of the second statement, let $\Lambda\subset \PP(T_X)$ be an arbitrary subvariety of codimension two. By our argument above, we may assume that $\Lambda\not=\PP(T_S)$. Since the base locus of $\vert \zeta+2\pi^*H\vert$ is contained in $\PP(T_S)\cup F$ by (b), it follows that there exists an effective element $\Gamma\in \vert (\zeta+2\pi^*H)\vert_{\Lambda}\vert$. Let $\Gamma=\sum a_i\Gamma_i$ be the decomposition with $\Gamma_i$ irreducible, reduced and pairwise distinct $2$-cycles. If $\Gamma_i$ is not contained in $\PP(T_S)\cup F$, then $(\zeta+2\pi^*H)\vert_{\Gamma_i}$ is pseudoeffective by (b). If $\Gamma_i$ is contained in $\PP(T_S)$, then the restriction $(\zeta+3\pi^*H)\vert_{\Gamma_i}$ is pseudoeffective as the base locus of $\vert(\zeta+3\pi^*H)\vert_{\PP(T_S)}\vert$ has dimension at most one. If $\Gamma_i$ is contained in $F$, then $\Gamma_i=F$ and therefore $\zeta\vert_F=c_1(\sO_{\PP^2}(1))$ is ample. Hence we obtain that $((\zeta+2\pi^*H) \cdot (\zeta+3\pi^*H))\vert_{\Lambda}$ is pseudoeffective.

{\em Proof of (d)} By (b) and (c) we know that $\zeta+4 \pi^* H$ is nef
if its restriction to $\PP(T_S)$ is nef.
Yet $T_S(4H) \cong \Omega_S(3H) \cong \Omega_S(3A)$, so it is sufficient to show that
$\Omega_S(3A)$ is globally generated. Yet this is clear since
$$
\Omega_{W^\circ}(3A) \cong \wedge^2 (T_{W^\circ}(-1))
$$
is globally generated.

{\em Proof of (e)}	
	Finally assume that $X$ is general in its deformation family. By \cite[Proposition 2.1 and (1.2.7)]{Tik81}, the curve $C$ is smooth, irreducible and $\sO_S(C)\cong \sO_S(4H)$.
	By b) the non-nef locus of $\zeta+3\pi^*H$ is contained in $\PP(T_S\vert_C):=\Pi_C$. By \cite[Proposition 5.1]{Tik81}, there exists an irreducible smooth divisor $D$ on $\PP(\Omega_S\vert_C)$ such that
	\[
	\sO_{\PP(\Omega_S\vert_C)}(D)\cong \sO_{\PP(\Omega_S\vert_C)}(1)\otimes \overline{\pi}^*\sO_C(-3H)
	\]
	where $\overline{\pi}\colon \PP(\Omega_S\vert_C)\rightarrow C$ is the natural projection. Using the duality $\Omega_S\cong T_S(H)$, the class of $D$ on $\Pi_C=\PP(T_S\vert_C)$ is $(\zeta-2\pi^*H)\vert_{\Pi_C}$.	In particular, the non-nef locus of $(\zeta-2\pi^*H)\vert_{\Pi_C}$ is contained in $D$. On the other hand, on $\PP(T_X)$, note that we have
	\[
    [\PP(T_S)]=3H\cdot (\zeta-3\pi^*H), \quad [\Pi_C]=12H^2\cdot (\zeta-3\pi^*H).
	\]
	Then an easy computation using \eqref{equationnumbersdegreeone}
	shows that the intersection number $(\zeta+3\pi^*H)\cdot D=0$. Hence, $\zeta+ \varepsilon \pi^*H$ is nef if and only if $\varepsilon\geq 3$.
\end{proof}

\begin{corollary} \label{cor:pseudoeffectivity-degree-one}
	Let $X$ be a smooth del Pezzo threefold of degree one. Then $T_X$ is not pseudoeffective.
\end{corollary}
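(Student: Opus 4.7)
The plan is to argue by contradiction. Suppose $\zeta$ is pseudoeffective. Corollary \ref{corollarydelpezzonotbig} tells us that $\zeta$ is not big, so $\zeta$ lies on the boundary of $\overline{\Pseff}(\PP(T_X))$. Since $\rho(X)=1$, this cone is two-dimensional; the class $\pi^*H$ generates one extremal ray (it has zero intersection with a line in a $\pi$-fibre), and as $\zeta$ is not proportional to $\pi^*H$ it must generate the other. Corollary \ref{corollaryextremalclass} then offers two alternatives: either $(A)$ $\zeta$ is modified nef, or $(B)$ there exists a unique prime divisor $D_1 \subset \PP(T_X)$ with $D_1 \equiv \lambda\zeta$ for some $\lambda > 0$.

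All further computations rest on the intersection numbers recorded after Proposition \ref{prop:nef-cone-del-Pezzo-degree-one}, together with $\zeta^2 \cdot (\pi^*H)^3 = d = 1$ and $\zeta^k \cdot (\pi^*H)^{5-k} = 0$ for $k \leq 1$. To rule out $(A)$ I would pair $\zeta^2$ with the third power of the nef class $\zeta + 4\pi^*H$ (nef by Proposition \ref{prop:nef-cone-del-Pezzo-degree-one}(d)):
\[
\zeta^2 \cdot (\zeta + 4\pi^*H)^3 \;=\; -78 + 12(-8) + 48(2) + 64(1) \;=\; -14 \;<\; 0,
\]
which violates the modified nef inequality \eqref{inequalitymodifiednef}.

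To rule out $(B)$ I would combine $D_1$ with the irreducible effective divisor $D \in |\zeta + \pi^*H|$ provided by Proposition \ref{prop:nef-cone-del-Pezzo-degree-one}(a). The numerical classes of $D$ and $D_1$ are linearly independent, hence they are distinct prime divisors and $D\cdot D_1$ is represented by a sum of irreducible codimension-two cycles of total class $\lambda(\zeta^2 + \zeta \cdot \pi^*H)$. The proof of Proposition \ref{prop:nef-cone-del-Pezzo-degree-one}(c) shows that on each such component $\Lambda$ the restriction $(\zeta + 2\pi^*H)(\zeta + 3\pi^*H)|_\Lambda$ is pseudoeffective, so pairing with the nef class $\zeta + 4\pi^*H$ should yield a nonnegative number. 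A direct expansion, however, gives
\[
(\zeta + 2\pi^*H)(\zeta + 3\pi^*H)(\zeta + 4\pi^*H)\cdot D \cdot D_1 \;=\; -38\lambda,
\]
the desired contradiction.

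The main subtlety is that the naive strategy---deducing non-bigness of $\zeta + \pi^*H$ from $(\zeta + \pi^*H)^5 = -88 < 0$ and then arguing that $\R^+(\zeta + \pi^*H)$ must be the second extremal ray with $\zeta$ lying on its ``wrong side''---fails, because for a general pseudoeffective class the sign of the top self-intersection does not determine bigness. One is therefore forced to analyse $\zeta$ itself as the boundary extremal class, invoking the structural dichotomy of Corollary \ref{corollaryextremalclass} and using each of parts (a), (c) and (d) of Proposition \ref{prop:nef-cone-del-Pezzo-degree-one}.
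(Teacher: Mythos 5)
Your proof is correct and follows essentially the same route as the paper: argue by contradiction, note via Corollary \ref{corollarydelpezzonotbig} that $\zeta$ is not big hence extremal, invoke the dichotomy of Corollary \ref{corollaryextremalclass}, and kill each branch with an intersection computation built from Proposition \ref{prop:nef-cone-del-Pezzo-degree-one} and \eqref{equationnumbersdegreeone} (both of your numbers, $-14$ and $-38\lambda$, check out). The only difference is organisational: the paper disposes of both branches simultaneously by observing that in either case $\zeta\cdot(\zeta+\pi^*H)$ is a pseudoeffective codimension-two class and computing the single product $D_1\cdot D_2\cdot(\zeta+3\pi^*H)^2\cdot(\zeta+4\pi^*H)=-11$, whereas you handle the modified-nef case separately with $\zeta^2\cdot(\zeta+4\pi^*H)^3$, exactly as the paper itself does in the degree-two case.
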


\begin{proof}
Arguing by contradiction we assume that $\zeta$ is pseudoeffective.
By Corollary \ref{corollarytangentnotbig} we know that $\zeta$ is not big, so it generates an extremal ray
in the pseudoeffective cone of $\PP(T_X)$. By Corollary \ref{corollaryextremalclass}
this implies that either $\zeta$ is nef in codimension one or represented by
an irreducible divisor $D_1$.

By Proposition \ref{prop:nef-cone-del-Pezzo-degree-one},a) there exists
an irreducible divisor in $D_2 \in \vert \zeta+\pi^*H\vert$.
Since $\zeta$ is nef in codimension one or represented by
an irreducible divisor $D_1 \neq D_2$,
the intersection product $D_1\cdot D_2$ is a pseudoeffective codimension two cycle. Since, by Proposition \ref{prop:nef-cone-del-Pezzo-degree-one},c) the restriction of $(\zeta+3\pi^*H)^2$ to any codimension two cycle is pseudoeffective and,
by Proposition \ref{prop:nef-cone-del-Pezzo-degree-one},d), the divisor $\zeta+4\pi^*H$ is nef, we obtain
	\[
	D_1\cdot D_2\cdot (\zeta+3\pi^*H)^2\cdot (\zeta+4\pi^*H)\geq 0.
	\]
Yet a straightforward computation using \eqref{equationnumbersdegreeone} shows
that the intersection product is $-11$, a contradiction.
\end{proof}

\subsubsection{del Pezzo threefolds of degree two}

Let $S\subset \PP^3$ be a smooth quartic surface. Then $S$ is a K3 surface and we have a canonical isomorphism $T_S\cong \Omega_S$. 
Recall that by the Noether-Lefschetz Theorem, $S$ has Picard number one if it is very general. If $S$ has Picard number one, Gounelas and Ottem have investigated the positivity of $\Omega_S$ in \cite[Section 4.2]{GO18}.  Denote by $U\subset \PP(T_S)$ the incidence variety associated to the surface of bitangents of $S$ (cf. \cite[p.30]{Wel81}).  Denote by $H$ the hyperplane section of $S$ and by $\pi\colon \PP(T_S)\rightarrow S$ the natural projection.
In the following lemma, we collect some results related to the positivity of $T_S$. 

\begin{lemma}
	\label{lemma:quartic-surface}
	Let $S\subset \PP^3$ be a smooth quartic surface. Then the following statements hold.
	\begin{enumerate}[(a)]
		\item The class $[U]$ in $\pic(T_S)$ is $6\zeta+8\pi^*H$. In particular, $\zeta+\frac{4}{3}\pi^*H$ is pseudoeffective.
		
		\item If $S$ is very general in its deformation family, then $\zeta+\varepsilon\pi^*H$ is pseudoeffective if and only if $\varepsilon\geq \frac{4}{3}$.
		
		\item $\zeta+2\pi^*H$ is nef.
		
		\item If $S$ is general in its deformation family, then $\zeta+\frac{3}{2}\pi^*H$ is nef.
	\end{enumerate}
\end{lemma}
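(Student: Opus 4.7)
My plan is to treat the four statements in order, with part (a) setting up the bitangent geometry that underlies the other parts. For (a), I would describe $U$ intrinsically as the image of the tangent map $\tau:\widetilde B\to\PP(T_S),\ (p,\ell)\mapsto (p,[T_p\ell])$, defined on the normalisation $\widetilde B$ of the variety of pointed bitangents $\{(p,\ell):\ell\text{ bitangent to }S,\ p\in\ell\cap S\}$. Writing $[U]=a\zeta+b\pi^*H$, the coefficient $a$ equals the number of bitangents of $S$ tangent at a general point $p\in S$. Through the double cover $\mu:X\to\PP^3$ realising $X$ as the degree-two del Pezzo threefold branched along $S$, the tangent direction of any line on $X$ through $p\in S$ lies in $T_pS\subset T_pX$ (since $d\mu(T_pX)=T_{\mu(p)}S$ and $\mu(l)$ is a bitangent tangent at $\mu(p)$), and each bitangent at $\mu(p)$ contributes two lines on $X$ through $p$ with the same tangent direction in $T_pS$. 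A specialisation from a general point of $X$, where the line count is $k=12$ by Theorem \ref{thm:dual-VMRT-V5}, yields $a=12/2=6$. For $b$, compute the class of the bitangent surface $B\subset G(1,3)$ in the Schubert basis: $[B]=12\sigma_2+28\sigma_{1,1}$, from $12$ bitangents through a general point of $\PP^3$ and the classical count of $28$ bitangents of a plane quartic; pulling back under the natural map $\phi:\PP(T_S)\to G(1,3),\ (p,[v])\mapsto\ell_{p,v}$ gives $b=8$. The effectivity of $[U]=6\zeta+8\pi^*H$ then delivers pseudoeffectivity of $\zeta+\tfrac{4}{3}\pi^*H=\tfrac{1}{6}[U]$.

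Part (c) is immediate from Lemma \ref{lemma:freeness-twist-tangent-bundle}: for $S\subset\PP^3$ of dimension $m=2$ with $K_S=0$, the bundle $T_S(2)\otimes\sO_S(K_S)=T_S(2)$ is globally generated, so $\zeta+2\pi^*H$ is base-point-free and in particular nef. For (b), assume $S$ is very general, so $\pic(S)=\Z H$ by Noether-Lefschetz and $N^1(\PP(T_S))_\R=\R\zeta\oplus\R\pi^*H$. Let $\varepsilon_0$ be the infimum of $\varepsilon$ with $\zeta+\varepsilon\pi^*H$ pseudoeffective; the class $\zeta+\varepsilon_0\pi^*H$ generates an extremal ray, so by Corollary \ref{corollaryextremalclass} it is either modified nef or proportional to a unique prime divisor. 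I would rule out $\varepsilon_0<\tfrac{4}{3}$ by combining (i) the inequality \eqref{inequalitymodifiednef} applied with the nef class $\zeta+2\pi^*H$ of (c), and (ii) the Gounelas-Ottem results \cite[Section 4.2]{GO18} on positivity of $\Omega_S$ for very general quartic K3 surfaces, which forbid non-vanishing of $H^0(S,\mathrm{Sym}^m T_S(jH))$ with $j/m<\tfrac{4}{3}$.

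For (d), let $C\subset\PP(T_S)$ be an irreducible curve. If $C\not\subset U$, then $[U]\cdot C\geq 0$ and $\pi^*H\cdot C\geq 0$ yield $(\zeta+\tfrac{3}{2}\pi^*H)\cdot C=\tfrac{1}{6}[U]\cdot C+\tfrac{1}{6}\pi^*H\cdot C\geq 0$. If $C\subset U$, the decomposition $(\zeta+\tfrac{3}{2}\pi^*H)|_U=\tfrac{1}{6}\sO_U(U)+\tfrac{1}{6}\pi^*H|_U$ reduces the task to proving that the self-intersection class $[U]|_U=\sO_U(U)$ is nef on the surface $U$ for general $S$. I would attack this using the $2{:}1$ structure of $U\to B$ together with a specialisation argument in the moduli of K3 quartics. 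This nefness of $[U]|_U$ is the main obstacle of the lemma, demanding precise control over degenerate bitangent configurations (tritangents, flexes, $\ldots$) and is where the hypothesis ``general'' enters essentially.
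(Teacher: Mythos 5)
The paper settles this lemma almost entirely by citation: (a) is \cite[Prop.~2.3]{Tik80}, \cite[Prop.~3.14]{Wel81}; (b) and the very general case of (d) are \cite[Cor.~4.2, Sect.~4.2.2]{GO18}; (c) is Lemma \ref{lemma:freeness-twist-tangent-bundle}; and the passage from ``very general'' to ``general'' in (d) is done by noting that $\zeta+(\tfrac32+\varepsilon)\pi^*H$ is ample for very general $S$ and that ampleness is open in families. Your (c) coincides with the paper's. The serious problem is your part (d): you reduce nefness of $(\zeta+\tfrac32\pi^*H)\vert_U$ on curves inside $U$ to nefness of $\sO_U(U)=[U]\vert_U$, and this class is \emph{never} nef. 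On $\PP(T_S)$ for a quartic K3 one has $\zeta^3=-c_2(S)=-24$, $\zeta^2\cdot\pi^*H=0$, $\zeta\cdot(\pi^*H)^2=4$, hence
\[
[U]^3=6^3\Bigl(\zeta+\tfrac43\pi^*H\Bigr)^3=216\Bigl(12\cdot\tfrac{16}{9}-24\Bigr)=-576<0 ;
\]
since $[U]^3$ is the self-intersection of $[U]\vert_U$ on the surface $U$, that restriction cannot be nef. Equivalently, $\zeta+\tfrac43\pi^*H$ has negative top self-intersection, so some curve $C$ satisfies $[U]\cdot C<0$, forcing $C\subset U$ and $[U]\vert_U\cdot C<0$. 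So the step you yourself flag as ``the main obstacle'' is a reduction to a false statement; the content of \cite[Sect.~4.2.2]{GO18} is precisely that the extra $\tfrac16\pi^*H\vert_U$ compensates the negativity of $\tfrac16[U]\vert_U$ on the offending curves, and the paper avoids redoing this by the openness argument above.

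There are also gaps in (a) and (b). The map $\phi\colon\PP(T_S)\to G(1,3)$ is not dominant (its image is the threefold of tangent lines, which contains $B$), so $\phi^*[B]$ is a codimension-two, i.e.\ curve, class on the threefold $\PP(T_S)$ and cannot equal the divisor class $[U]$; extracting $b=8$ from the bidegree $(12,28)$ requires an excess/residual intersection computation as in \cite{Tik80,Wel81}, not a naive pullback. Moreover a line $\ell\subset X$ through $p\in S$ meets the ramification divisor transversally ($\ell\cdot S=2$, spread over the two tangency points), so $T_p\ell\not\subset T_pS$; the correct statement is that the two lines $\ell,\iota(\ell)$ over a bitangent have distinct tangent directions at $p$ with the same image under $d\mu_p$, namely the bitangent direction in $T_pS$. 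Finally, in (b) the inequality \eqref{inequalitymodifiednef} applied with the nef class $\zeta+2\pi^*H$ gives $-24+4\varepsilon^2+16\varepsilon\geq 0$, i.e.\ only $\varepsilon\geq\sqrt{10}-2<\tfrac43$, so your ingredient (i) does not reach the threshold, and your ingredient (ii) is the statement of \cite[Cor.~4.2]{GO18} itself; as in the paper, (b) remains a citation rather than a proof.
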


\begin{proof}
	Statement (a) is proved in \cite[Proposition 2.3]{Tik80}, \cite[Prop.3.14]{Wel81} and (b) follows from \cite[Corollary 4.2]{GO18}. Statement (c) is a consequence of Lemma \ref{lemma:freeness-twist-tangent-bundle}. For (d), this is already proved in \cite[Section 4.2.2]{GO18} for $S$ being very general. On the other hand, note that $\zeta+\frac{3}{2}\pi^*H$ is nef if $\zeta+(\frac{3}{2}+\varepsilon)\pi^*H$ is ample for any rational numbers $0<\varepsilon\ll 1$. Since ampleness is an open property, it follows that $\zeta+\frac{3}{2}\pi^*H$ is nef for $S$ general in the deformation family.
\end{proof}

Let $X$ be a smooth del Pezzo threefold of degree two. Then $X$ is a double cover $\mu : X \rightarrow \PP^3$ such that $\mu^*\sO_{\PP^3}(1)\cong \sO_X(H)$ and the branch locus of $\mu$ is a smooth quartic surface $S\subset \PP^3$, see \cite[Theorem 3.3.5]{EncV}. Since the ramification divisor of the double cover is isomorphic to $S$, we will identify $\mu^{-1}(S)$ with $S$.
The restriction of the cotangent map composed with the canonical map
$\Omega_{X}\vert_S\rightarrow \Omega_S$ gives a surjective map
$$
\mu^*\Omega_{\PP^3}\vert_S\rightarrow \Omega_{X}\vert_S\rightarrow \Omega_S.
$$
Thus the second exterior power of the cotangent map
$$
\mu^* (\wedge^2 \Omega_{\PP^3}) \rightarrow \wedge^2 \Omega_{X}
$$
has degree one along $S$ and its image identifies to $K_S$.

The kernel of $\Omega_{X}\vert_S\rightarrow \Omega_S$ is isomorphic to $\sO_S(-S)$, so
we obtain a canonical splitting
\[
\Omega_X\vert_S = \Omega_S\oplus \sO_S(-S)\cong \Omega_S\oplus \sO_S(-2H).
\]
Using this
canonical splitting, we have a canonical quotient $T_X|_S \rightarrow T_S$
which allows us to consider $\PP(T_S)$ as a subvariety of $\PP(T_X|_S)$.

\begin{proposition}
	\label{prop:nef-cone-del-Pezzo-degree-two}
	Let $X$ be a smooth del Pezzo threefold of degree two. Then the following statements hold:
	\begin{enumerate}[(a)]
		\item the base locus of $\vert \zeta+\pi^*H\vert$ is contained in $\PP(T_S)$;
		
		\item the restriction $\left(\zeta+\frac{4}{3}\pi^*H\right)\vert_{\PP(T_S)}$ is pseudoeffective;
		
		\item $\zeta+2\pi^*H$ is nef;
		
		\item if $X$ is general in its deformation family, then $\zeta+\frac{3}{2}\pi^*H$ is nef.
	\end{enumerate}
\end{proposition}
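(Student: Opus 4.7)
The plan mirrors the treatment of the degree one case (Proposition \ref{prop:nef-cone-del-Pezzo-degree-one}): part (a) is the technical core, and parts (b), (c), (d) follow formally from it by combining (a) with Lemma \ref{lemma:quartic-surface} on $\PP(T_S)$ and handling curves not contained in $\PP(T_S)$ separately.

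For (a), I would start from the Euler sequence on $\PP^3$, which yields that $T_{\PP^3}(-1)$ is globally generated. Taking second exterior powers of the cotangent map $\mu^*\Omega_{\PP^3}\to \Omega_X$ and using $\wedge^2\Omega_Y\cong T_Y\otimes K_Y$ on both threefolds, together with $K_{\PP^3}=-4H_{\PP^3}$, $K_X=-2H$ and $\mu^*H_{\PP^3}=H$, and then twisting by $-H$, I obtain an injective morphism
\[
\mu^*T_{\PP^3}(-1)\longrightarrow T_X(H)
\]
whose source is globally generated and which is an isomorphism on $X\setminus S$. A local model near $S$ using coordinates $(u,v,w)$ on $\PP^3$ with $S=\{w=0\}$ and $t$ on $X$ with $t^2=w$ yields $dw\mapsto 2t\,dt$, hence $du\wedge dw\mapsto 2t\,du\wedge dt$ and $dv\wedge dw\mapsto 2t\,dv\wedge dt$. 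Restricted to $S$ these vanish, so the image of $\wedge^2\mu^*\Omega_{\PP^3}\to \wedge^2\Omega_X$ on $S$ is exactly the $K_S$-summand of $\wedge^2\Omega_X|_S\cong K_S\oplus \Omega_S(-2H)$. Transcribing this through the canonical splitting $T_X(H)|_S=\Omega_S(H)\oplus \sO_S(3H)$ induced by $\Omega_X|_S=\Omega_S\oplus \sO_S(-2H)$, it follows that the image of $\mu^*T_{\PP^3}(-1)\to T_X(H)$ on $S$ is exactly the $\sO_S(3H)$-summand, which is then globally generated. Consequently, the base points of $|\zeta+\pi^*H|$ over a point $s\in S$ are precisely the quotients of $T_{X,s}$ that kill the normal direction $\sO_S(2H)|_s$, i.e.\ those factoring through $T_S|_s$. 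These form precisely $\PP(T_S|_s)$, and varying $s$ gives (a).

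For (b), the embedding $\PP(T_S)\hookrightarrow \PP(T_X)$ induced by the quotient $T_X|_S\twoheadrightarrow T_S$ pulls $\zeta$ back to $\zeta_S$ and $\pi^*H$ back to $\pi_S^*H$, so $(\zeta+\tfrac{4}{3}\pi^*H)|_{\PP(T_S)}=\zeta_S+\tfrac{4}{3}\pi_S^*H$, which is pseudoeffective by Lemma \ref{lemma:quartic-surface}(a). For (c), I would check non-negative intersection with every curve $C\subset \PP(T_X)$. If $C\not\subset \PP(T_S)$, then (a) yields an effective divisor in $|\zeta+\pi^*H|$ not containing $C$, so $(\zeta+\pi^*H)\cdot C\geq 0$; adding $\pi^*H\cdot C\geq 0$ (as $\pi^*H$ is nef) gives $(\zeta+2\pi^*H)\cdot C\geq 0$. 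If $C\subset \PP(T_S)$, the restriction $\zeta_S+2\pi_S^*H$ is nef by Lemma \ref{lemma:quartic-surface}(c). Part (d) is handled identically, using Lemma \ref{lemma:quartic-surface}(d) in place of (c); the hypothesis that $X$ is general ensures that the branch quartic $S$ is general, since the moduli of del Pezzo threefolds of degree two with Picard number one identifies with the moduli of smooth quartic surfaces modulo $\mathrm{PGL}_4$.

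The main obstacle I anticipate is the local calculation in (a): one must verify that the composition $\mu^*T_{\PP^3}(-1)\to T_X(H)\to \Omega_S(H)$ vanishes identically on $S$, not merely generically, which amounts to pinning down the image of the wedged cotangent map along the entire branch divisor. Once this is established, parts (b), (c), (d) become essentially formal consequences of (a) together with the known positivity of the tangent bundle of a quartic K3 surface recorded in Lemma \ref{lemma:quartic-surface}.
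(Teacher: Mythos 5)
Your proposal is correct and follows essentially the same route as the paper: the twisted second exterior power of the cotangent map of the double cover gives global generation of $T_X(H)$ off the branch quartic $S$, the image along $S$ is identified with the $\sO_S(3H)$-summand of the canonical splitting (the paper establishes this in the paragraph preceding the proposition; your local model $t^2=w$ is the computation behind it), and (b)--(d) then reduce to Lemma \ref{lemma:quartic-surface} on $\PP(T_S)$. The only difference is that you spell out the curve-by-curve deduction of nefness and the identification of moduli of $X$ with moduli of $S$, which the paper leaves implicit.
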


\begin{proof}
	We consider the cotangent map $\mu^*\Omega_{\PP^2}\rightarrow \Omega_X$. Since $(\wedge^{2}\Omega_{\PP^3})(3)$ is globally generated (see Lemma \ref{lemma:freeness-twist-tangent-bundle}), we see that $(\wedge^2\Omega_X)(3H)\cong T_X(H)$ is globally generated in the complement of the ramification divisor $S$. By the paragraph before the proposition, we have
	\[
	(T_X(H))\vert_S\cong (\wedge^2\Omega_X)(3H)\vert_S \cong \Omega_S(H)\oplus \sO_S(3H).
	\]
	We see that the base locus of $\vert\zeta+\pi^*H\vert$ is contained in $\PP(T_S)\subset \PP(T_X\vert_S)$. Now the remaining statements follow easily from Lemma \ref{lemma:quartic-surface}.
\end{proof}

\begin{corollary}
	\label{prop:pseudoeffectiveity-degree-two}
	Let $X$ be a smooth del Pezzo threefold of degree two. If $T_X$ is pseudoeffective, then there exists an effective divisor $D\subset \PP(T_X)$ such that $D\sim m\zeta$ for some positive integer $m$. In particular, if $X$ is general in its deformation family, then $T_X$ is not pseudoeffective.
\end{corollary}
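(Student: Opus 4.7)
The plan is first to show, using that $T_X$ is not big (Corollary~\ref{corollarydelpezzonotbig}) together with the structure of the extremal ray of the pseudoeffective cone (Corollary~\ref{corollaryextremalclass}), that pseudoeffectivity of $\zeta$ forces the existence of a prime divisor $D_1\sim m\zeta$; then, for $X$ general, to intersect $D_1$ with a generic element of $|\zeta+\pi^*H|$ and with the nef class $\zeta+\tfrac{3}{2}\pi^*H$ (available only for general $X$, by Proposition~\ref{prop:nef-cone-del-Pezzo-degree-two}(d)) to produce a numerical contradiction.

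For the first assertion, assume $\zeta$ is pseudoeffective. Since $\pic(\PP(T_X))=\Z\zeta\oplus\Z\pi^*H$ has rank two and $\zeta$ is not big by Corollary~\ref{corollarydelpezzonotbig}, the class $\zeta$ must span an extremal ray of the pseudoeffective cone of $\PP(T_X)$. Corollary~\ref{corollaryextremalclass} (with $\alpha=0$) then leaves two possibilities: either $\zeta$ is modified nef, or a unique prime divisor $D_1$ satisfies $\R^+D_1=\R^+\zeta$; in the latter case the class of $D_1$ in the lattice $\pic(\PP(T_X))$ must be a positive integer multiple of $\zeta$, giving $D_1\sim m\zeta$ as required. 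To rule out the modified-nef case I would invoke Lemma~\ref{lemma:del-Pezzo-intersection-number} with $d=2$ and $b_3(X)=20$ (the latter via $\chi_{\rm top}(X)=2\chi_{\rm top}(\PP^3)-\chi_{\rm top}(S)=-16$, since $X\to\PP^3$ is a double cover branched along a quartic K3 surface $S$). The resulting numerical data $\zeta^5=-48$, $\zeta^4\cdot\pi^*H=-4$, $\zeta^3\cdot\pi^*H^2=4$, $\zeta^2\cdot\pi^*H^3=2$ combined with nefness of $\zeta+2\pi^*H$ (Proposition~\ref{prop:nef-cone-del-Pezzo-degree-two}(c)) yields
\[
\zeta^2\cdot(\zeta+2\pi^*H)^3=-48-24+48+16=-8,
\]
in contradiction with \eqref{inequalitymodifiednef}.

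For the second assertion, assume $X$ is general and, for a contradiction, that $\zeta$ is pseudoeffective; the first part then provides a prime divisor $D_1\sim m\zeta$. By Proposition~\ref{prop:nef-cone-del-Pezzo-degree-two}(a) the linear system $|\zeta+\pi^*H|$ is non-empty with base locus contained in $\PP(T_S)$, which has codimension two in $\PP(T_X)$; in particular $D_1$ is not a fixed component, so a generic $D_2\in|\zeta+\pi^*H|$ does not contain $D_1$. Hence $D_1\cdot D_2$ is a pseudoeffective codimension-two cycle, and nefness of $\zeta+\tfrac{3}{2}\pi^*H$ (Proposition~\ref{prop:nef-cone-del-Pezzo-degree-two}(d)) gives
\[
D_1\cdot D_2\cdot\bigl(\zeta+\tfrac{3}{2}\pi^*H\bigr)^3\geq 0.
\]
On the other hand, a direct expansion using the numerical data above produces
\[
\zeta\cdot(\zeta+\pi^*H)\cdot\bigl(\zeta+\tfrac{3}{2}\pi^*H\bigr)^3=-48-22+45+\tfrac{81}{4}=-\tfrac{19}{4},
\]
so the intersection equals $-\tfrac{19m}{4}<0$, the desired contradiction.

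The delicate point is that the contradiction hinges on the precise coefficient $\tfrac{3}{2}$: replacing it by $2$ (the always-valid nef coefficient from Proposition~\ref{prop:nef-cone-del-Pezzo-degree-two}(c)) already turns the critical intersection product positive, so the general-$X$ hypothesis enters essentially through Proposition~\ref{prop:nef-cone-del-Pezzo-degree-two}(d). A secondary but routine calculation is confirming $b_3(X)=20$, which follows immediately from the Euler-characteristic formula for the double cover $X\to\PP^3$.
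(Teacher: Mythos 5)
Your proof is correct. The first half (pseudoeffectivity of $\zeta$ forces $D_1\sim m\zeta$, after excluding the modified nef case via $\zeta^2\cdot(\zeta+2\pi^*H)^3=-8<0$) is exactly the paper's argument. In the second half you take a genuinely different and in fact more economical route. The paper does not intersect $D_1\cdot D_2$ directly with $(\zeta+\tfrac32\pi^*H)^3$; instead it writes $D_1\cdot D_2=a[\PP(T_S)]+\sum_i Z_i$ with $Z_i$ mobile, multiplies by the total dual VMRT $\check{\mathcal{C}}$ (whose class is a positive multiple of $\zeta+\tfrac43\pi^*H$ by Theorem \ref{thm:dual-VMRT-V5}), and uses the Gounelas--Ottem pseudoeffectivity of $(\zeta+\tfrac43\pi^*H)\vert_{\PP(T_S)}$ to conclude that $\zeta\cdot(\zeta+\pi^*H)\cdot(\zeta+\tfrac43\pi^*H)$ is a pseudoeffective $2$-cycle, and only then caps with $(\zeta+\tfrac32\pi^*H)^2$ to get a negative number. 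Your observation that $D_1\cdot D_2$ is already an effective codimension-two cycle (since the base locus of $|\zeta+\pi^*H|$ has codimension two, so the prime divisor $D_1$ is not a fixed component) makes the decomposition and the VMRT input unnecessary: capping an effective $3$-cycle with the cube of the nef class $\zeta+\tfrac32\pi^*H$ must be nonnegative, and your computation $\zeta\cdot(\zeta+\pi^*H)\cdot(\zeta+\tfrac32\pi^*H)^3=-48-22+45+\tfrac{81}{4}=-\tfrac{19}{4}$ is correct (using $\zeta^2\cdot\pi^*H^3=H^3=2$, which you rightly supply since Lemma \ref{lemma:del-Pezzo-intersection-number} omits it). Your closing remark is also accurate: with the unconditionally nef class $\zeta+2\pi^*H$ the analogous product is $+36$, so the generality hypothesis enters precisely through Proposition \ref{prop:nef-cone-del-Pezzo-degree-two}(d), exactly as in the paper. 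What your approach buys is a shorter proof using only nefness; what the paper's approach buys is a further illustration of the total dual VMRT machinery that the rest of the section is built on.
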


\begin{remark}
	We have $b_3(X)= 20$ by \cite[Table 12.2]{EncV}. Thus by Lemma \ref{lemma:del-Pezzo-intersection-number} we have
	\begin{equation} \label{equationnumbersdegreetwo}
	\zeta^5=-48,\quad \zeta^4\cdot \pi^*H=-4,\quad \zeta^3\cdot \pi^*(H^2)=4.
	\end{equation}
\end{remark}

\begin{proof}
	By Corollary \ref{corollarytangentnotbig} we know that $\zeta$ is not big, so it generates an extremal ray
	in the pseudoeffective cone of $\PP(T_X)$.
	If $\zeta$
	is modified nef, then $\zeta^2$ is a pseudoeffective cycle of codimension two
	and, since $\zeta+2\pi^*H$ is nef by Proposition \ref{prop:nef-cone-del-Pezzo-degree-two} (c), one has
	\[
	\zeta^2\cdot \left(\zeta+2\pi^*H\right)^3\geq 0.
	\]
	An elementary computation using \eqref{equationnumbersdegreetwo} shows that this intersection number is $-8$, a contradiction. Hence, by Corollary \ref{corollaryextremalclass}  there exists
	a unique prime divisor $D \subset \PP(T_X)$ generating the extremal ray
	$\R^+ \zeta$.
	
	Now we assume that $X$ is general in its deformation family. We claim that  the class
	$\zeta \cdot (\zeta+\pi^* H) \cdot (\zeta+\frac{4}{3} \pi^* H)$ is
	a pseudoeffective cycle of codimension three. Since $\zeta+\frac{3}{2} \pi^* H$ is nef by Proposition \ref{prop:nef-cone-del-Pezzo-degree-two} (d), this implies that
	$$
	\zeta \cdot (\zeta+\pi^* H) \cdot (\zeta+\frac{4}{3} \pi^* H) \cdot (\zeta+\frac{3}{2} \pi^* H)^2 \geq 0.
	$$
	An elementary computation using using \eqref{equationnumbersdegreetwo} shows that this intersection product is $\frac{-49}{6}$. We have reached a contradiction.
	
	{\em Proof of the claim.}
	Let $D \in |m \zeta|$ be the unique prime divisor generating the extremal ray
	$\R^+ \zeta$.
	If $D'$ is a general element of $\zeta+\pi^* H$, we have
	$$
	D \cdot D' = a [\PP(T_S)] + \sum_i Z_i
	$$
	where $a \geq 0$ and the $Z_i$ are mobile cycles of codimension two.
	
	Let $\check{\mathcal{C}}$ be the dual VMRT. By Theorem \ref{thm:dual-VMRT-V5}
	the class of $\check{\mathcal{C}}$ is a positive multiple of $\zeta+\frac{4}{3} \pi^* H$. Since the cycles $Z_i$ are mobile, the intersection
	$\check{\mathcal{C}} \cdot Z_i$ is an effective cycle of codimension three.
	Moreover, by \cite[Cor.4.2]{GO18} (cf. Proposition \ref{prop:nef-cone-del-Pezzo-degree-two} (b)), the restriction of $\zeta+\frac{4}{3} \pi^* H$
	to $\PP(T_S)$ is a pseudoeffective divisor class. Thus
	$\check{\mathcal{C}} \cdot [\PP(T_S)]$ can be represented by an effective cycle of codimension three. This shows that the intersection
	$$
	m \zeta \cdot (\zeta+ \pi^* H) \cdot (\zeta+\frac{4}{3} \pi^* H) =
	D \cdot D' \cdot \check{\mathcal{C}}
	$$
	can be represented by an effective cycle of codimension three.
\end{proof}


\begin{remark} \label{remarkspecialcases}
    If the del Pezzo threefold is not general, the proof above does not work. Indeed assume that the quartic $S$ contains a line $l$. Then the quotient $T_S|_l \rightarrow \sO_{\PP^1}(-2)$ defines a curve $\tilde l$ such that $(\zeta+\frac{4}{3} \pi^* H) \cdot \tilde l<0$. Thus $\tilde l$ is contained in the surface $U$ and
    the restriction of $(\zeta+\frac{3}{2} \pi^* H)$ to $U$ is not nef. 
\end{remark}

\subsection{Proof of the main statement} \label{subsectionproofdelPezzothreefold}

We start by settling the cases that are not covered by our general setup:

\begin{proposition}
	\label{prop:del-Pezzo-degree>5}
	Let $X$ be a del Pezzo manifold of dimension $n$ and degree $d$. If $n\geq 3$ and $d\geq 6$, then $T_X$ is big.
\end{proposition}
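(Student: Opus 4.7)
The plan is to combine Fujita's classification of del Pezzo manifolds with a case-by-case analysis. First I would invoke Fujita's theorem to reduce the question to a finite list: any del Pezzo manifold $X$ with $n\geq 3$ and $d\geq 6$ is isomorphic to $(\PP^3,\sO(2))$ with $d=8$, the blow-up of $\PP^3$ at a point with $d=7$, $\PP^1\times\PP^1\times\PP^1$ with $d=6$, $\PP^2\times\PP^2$ (with $n=4$) with $d=6$, or the full flag variety $F(1,2;3)\cong\PP(T_{\PP^2})$ with $n=3$, $d=6$. The first four of these varieties are toric, so Hsiao's theorem \cite[Cor.1.3]{Hsi15} immediately yields that $T_X$ is big in those four cases. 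Only $X=F(1,2;3)$ requires an additional argument.

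For $X=F(1,2;3)$, the plan is to exploit its two natural $\PP^1$-bundle projections $\pi_1,\pi_2\colon X\to\PP^2$ onto the point factor and the dual factor. Writing $H_i:=\pi_i^*\sO_{\PP^2}(1)$, one has $\pic(X)=\Z H_1\oplus\Z H_2$, the polarization $H=H_1+H_2$, and $-K_X=2H$. Each $\pi_i$ is a smooth conic bundle, so Corollary \ref{corollaryconicbundle} applies to the family of its fibres and produces a total dual VMRT $\check{\mathcal{C}}_i\subset\PP(T_X)$ of class
\[
[\check{\mathcal{C}}_i] \;=\; \zeta + \pi^*K_{X/\PP^2} \;=\; \zeta + \pi^*(3H_i - 2H).
\]
Summing these two identities over $i=1,2$ would then give
\[
[\check{\mathcal{C}}_1] + [\check{\mathcal{C}}_2] \;=\; 2\zeta + \pi^*(3H_1 + 3H_2 - 4H) \;=\; 2\zeta - \pi^*H,
\]
which exhibits $\zeta-\tfrac{1}{2}\pi^*H$ as a pseudoeffective class on $\PP(T_X)$. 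Since $H$ is ample, Lemma \ref{lemmasubstractbig} then concludes that $T_X$ is big, finishing the proof.

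The only non-routine input is Fujita's classification, which is classical; once restricted to the single non-toric case $X=F(1,2;3)$, the computation is a direct application of Corollary \ref{corollaryconicbundle}. I do not anticipate any serious obstacles in carrying out this plan.
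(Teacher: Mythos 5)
Your proof is correct, and for the four toric cases it coincides with the paper's argument (Fujita's classification plus Hsiao's theorem; note the paper's list of threefolds with $d\geq 6$ actually omits $\PP^3$ itself, which your list includes --- it is toric anyway, so nothing changes). The genuine difference is in the single non-toric case $X\cong\PP(T_{\PP^2})$: the paper simply cites \cite[Example 2]{SW04}, where bigness (in fact bigness and $1$-ampleness) of $T_{\PP(T_{\PP^2})}$ is established, whereas you give a self-contained argument using the paper's own machinery. Your computation checks out: the fibres of either projection $\pi_i\colon X\to\PP^2$ form a dominating unsplit family with $p=0$ (the fibres are smooth with trivial normal bundle, so $T_X|_l\cong\sO(2)\oplus\sO^{\oplus 2}$), all fibres are reduced, so Corollary \ref{corollaryconicbundle} applies and gives $[\check{\mathcal{C}}_i]=\zeta+\pi^*(3H_i-2H)$ with $H=H_1+H_2$; summing yields the effective class $2\zeta-\pi^*H$, and Lemma \ref{lemmasubstractbig} with the ample class $\tfrac12 H$ concludes. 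What your route buys is internal coherence: it makes the degree-$6$ threefold case an exact analogue of the degree-$5$ del Pezzo surface argument (Proposition \ref{proposition-degree-5}), eliminating the external reference. What the paper's citation buys is the stronger conclusion that $T_X$ is $1$-ample, which your divisor-class computation does not recover.
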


\begin{proof}
	If $n\geq 4$, then $X$ is isomorphic to $\PP^2\times \PP^2$ (see \cite[Table 12.1]{EncV}). In particular, $X$ is toric and therefore $T_X$ is big by \cite[Corollary 1.3]{Hsi15}.
	
	If $n=3$, the only smooth del Pezzo threefolds with degree $d\geq 6$ are $\PP(T_{\PP^2})$, $\PP^1\times\PP^1\times\PP^1$ and $\PP_{\PP^2}(\sO_{\PP^2}\oplus\sO_{\PP^2}(1))$ \cite[Thm.3.3.1]{EncV}. For $X\cong \PP(T_{\PP^2})$, by \cite[Example 2]{SW04}, $T_X$ is big and $1$-ample. The last two are toric varieties, hence $T_X$ is big by \cite[Corollary 1.3]{Hsi15}.
\end{proof}

\begin{proof}[Proof of Theorem \ref{THM:bigness-pseudo-effectivity-del-Pezzo-threefolds}]
	For $d\leq 3$, this is done in Corollary \ref{cor:pseudoeffectivity-degree-one}, Corollary \ref{prop:pseudoeffectiveity-degree-two} and Theorem \ref{thm:Tangent-hypersurface-not-pseudo-effective}. For $d\geq 5$, it follows from Theorem \ref{thm:dual-VMRT-V5} and Proposition \ref{prop:del-Pezzo-degree>5}.
	
	For $d=4$, by Theorem \ref{thm:dual-VMRT-V5}, the total dual VMRT $[\check{\mathcal{C}}]$ has class $4\zeta$. In particular, $T_X$ is pseudoeffective, but it is not big by Corollary \ref{corollarytangentnotbig}.
\end{proof}

\end{document}